\def\R{\mathbb R}
\def\N{\mathbb N}
\def\Z{\mathbb Z}
\def\P{\mathbb P}
\def\J{\mathbb J}
\def\H{{\rm H}}
\def\G{\Gamma}
\def\Rm{{\rm Rm}}
\newtheorem{theorem}{Theorem}[section]
\newtheorem{lemma}[theorem]{Lemma}
\newtheorem{proposition}[theorem]{Proposition}
\theoremstyle{definition}
\newtheorem{definition}[theorem]{Definition}
\newtheorem{remark}[theorem]{Remark}
\def\signal{\bigskip\begin{center} {\sc Ayadi Lazrag\par\vspace{3mm}
Univ. Nice Sophia Antipolis,\\ CNRS, LJAD, UMR 7351\\ 06100 Nice\\ FRANCE\par\vspace{3mm}
email:} \tt{Ayadi.Lazrag@unice.fr}\end{center}}
\def\signlr{\bigskip\begin{center} {\sc Ludovic Rifford\par\vspace{3mm}
Univ. Nice Sophia Antipolis\\
\& Institut Universitaire de France\\ CNRS, LJAD, UMR 7351\\ 06100 Nice\\ FRANCE\par\vspace{3mm}
email:} \tt{ludovic.rifford@math.cnrs.fr}\end{center}}
\def\signrr{\bigskip \begin{center} {\sc Rafael O. Ruggiero\par\vspace{3mm}
PUC-Rio, Departamento de Matem\'{a}tica. Rua Marqu\'{e}s de S\~{a}o Vicente 225, G\'{a}vea,
22450-150, Rio de Janeiro\\ BRAZIL\par\vspace{3mm}
email:} \tt{rorr@mat.puc-rio.br}\end{center}}
\begin{document}

\title[Frank's lemma for $C^2$-Ma\~n\'e perturbations]{Franks' lemma for $C^2$-Ma\~n\'e perturbations of Riemannian metrics and applications to persistence}

\author{A. Lazrag}
\author{L. Rifford}
\author{R. Ruggiero}

\begin{abstract}
Given a compact Riemannian manifold, we prove a uniform Franks' lemma at second order for geodesic flows and apply the result in persistence theory.
\end{abstract}

\maketitle

\section{Introduction}
One of the most important tools of $C^{1}$ generic and stability theories of dynamical systems is the celebrated Franks Lemma \cite{franks71}: 
\bigskip

Let $M$ be a smooth ({\it i.e.} of class $C^{\infty}$) compact manifold of dimension $n\geq 2$ and let  $f:M \longrightarrow M$ be a $C^1$ diffeomorphism.  Consider a finite set of points $S= \{p_{1}, p_{2},.., p_{m}\}$, let $\Pi = \bigoplus_{i=1}^{m} T_{p_{i}}M$, 
$\Pi ' = \bigoplus_{i=1}^{m} T_{f(p_{i})}M$. Then there exist $\epsilon_{0} >0$ such that for every $0<\epsilon \leq \epsilon_{0}$ there exists $\delta = \delta(\epsilon) >0$ such that the following holds:\\

\noindent Let $L = (L_{1}, L_{2},..,L_{m}): \Pi \longrightarrow \Pi '$ be an isomorphism such that 
$$
\bigl\| L_{i} - D_{p_{i}}f \bigr\|<\delta \qquad \forall i = 1, \ldots, m,
$$
then there exists a $C^1$ diffeomorphism $g :M \longrightarrow M$ satisfying 
\begin{enumerate}
\item $g(p_{i}) = f(p_{i})$ for every $i=1, \ldots, m$,
\item $ D_{p_{i}}g = L_{i}$ for each $i=1, \ldots, m$, 
\item the diffeomorphim $g$ is in the $\epsilon$ neighborhood of $f$ in the $C^{1}$ topology. 
\end{enumerate}
\bigskip

In a few words, the lemma asserts that given a collection $S$  of $m$ points $p_{i}$ in the manifold $M$, any isomorphism 
from $\Pi$ to $\Pi '$ can be the collection of the differentials of a diffeomorphism $g$, $C^{1}$ 
close to $f$, at each point of $S$ provided that the isomorphism is sufficiently close to the direct sum of the 
maps $D_{p_{i}}f$, $i=1, \ldots,m$. The sequence of points is particularly interesting for applications in dynamics 
when the collection $S$ is a subset of a periodic orbit. The idea of the proof of the lemma is quite elementary: we conjugate the 
isomorphisms $L_{i}$ by the exponential map of $M$ in suitably small neighborhoods of the points $p_{i}$'s 
and then glue (smoothly) the diffeomorphism $f$ outside the union of such neighborhoods with these collection of 
conjugate-to-linear maps. So the proof strongly resembles an elementary calculus exercise: we can glue a $C^{1}$  
function $h: {\mathbb R} \longrightarrow {\mathbb R}$ outside a small neighborhood $U$ of a point $x$ with 
the linear function in $U$ whose graph is the line through $(x,h(x))$ with slope $h'(x)$ and get a new function 
that is $C^{1}$ close to $h$. 

The Franks lemma admits a natural extension to flows, and its important applications in 
the study of stable dynamics gave rise to versions for more specific families of systems, like symplectic diffeomorphisms and 
Hamiltonian flows \cite{kn:Robinson,kn:Vivier}. It is clear that for specific families of systems the proof of the lemma should be more 
difficult that just gluing conjugates of linear maps by the exponential map since this surgery procedure in 
general does not preserve specific properties of systems, like preserving symplectic forms in the case of symplectic maps. 
The Frank's Lemma was extensively used by R. Ma\~{n}\'{e} in his proof of the $C^{1}$ structural stability conjecture \cite{mane88}, and 
we could claim with no doubts that it is one of the pillars of the proof together with C. Pugh's $C^{1}$ closing lemma \cite{pugh67a,pugh67b}
(see Newhouse \cite{kn:Newhouse} for the proof of the $C^{1}$ structural stability conjecture for symplectic diffeomorphisms).

A particularly challenging problem is to obtain a version of Frank's Lemma for geodesic flows. First of all, a typical 
perturbation of the  geodesic flow of a Riemannian metric in the family of smooth flows is not the geodesic flow of another Riemannian metric. To ensure that perturbations of a geodesic flow are geodesic flows as well the most natural way to proceed is to perturb the Riemannian metric in the manifold itself. But then, since a local perturbation 
of a Riemannian metric changes all geodesics through a neighborhood, the geodesic flow of the perturbed metric 
changes in tubular neighborhoods of vertical fibers in the unit tangent bundle. So local perturbations of the metric are not 
quite local for the geodesic flow, the usual strategy applied in generic dynamics of perturbing a flow in a flowbox without 
changing the dynamics outside the box does not work. This poses many interesting, technical problems in the theory of local perturbations of dynamical systems of geometric origin, the famous works of Klingenberg-Takens \cite{kt72} and Anosov \cite{anosov82} (the bumpy metric theorem) about generic properties of closed geodesics are perhaps the two best known examples.  Moreover, geodesics in general have many self-intersections so the effect of a local perturbation of the metric on the global dynamics of perturbed orbits is unpredictable unless we know a priori that the geodesic flow enjoys some sort of stability (negative sectional curvatures, Anosov flows for instance). 

The family of metric perturbations which preserves a compact piece of a given geodesic  is the most used to study generic theory of periodic geodesics. This family of perturbations is relatively easy to characterize analytically when we restrict ourselves to the category of conformal perturbations or more generally, to the set of perturbations of Lagrangians by small 
potentials. Recall that a Riemannian metric $h$ in a manifold $M$ is conformally equivalent to a Riemannian metric $g$ in $M$ 
if there exists a positive, $C^{\infty}$ function $b: M \longrightarrow{\mathbb R}$ such that $h_{x}(v,w) = b(x)g_{x}(v,w)$ 
for every $x \in M$ and $v,w \in T_{x}M$. Given a $C^{\infty}$, Tonelli Lagrangian $L : TM\times TM \longrightarrow {\mathbb R}$ 
defined in a compact manifold $M$, and a $C^{\infty}$ function $u : M\longrightarrow {\mathbb R}$, the function 
$L_{u}(p,v) = L(p,v) + u(p)$ gives another Tonelli Lagrangian. The function $u$ is usually called a potential because of the 
analogy between this kind of Lagrangian and mechanical Lagrangians.

 By Maupertuis principle (see for example \cite{dfn92}), the Lagrangian associated to a metric $h$ in $M$ that is conformally equivalent to $g$ is of the form $L(p,v)= \frac{1}{2}g_{p}(v,v) + u(p)$ 
for some function $u$. Since the Lagrangian of a metric $g$ is given by the formula $L_{g}(p,v) = \frac{1}{2}g_{p}(v,v)$, 
we get $L_{h}(p,v) = L_{g}(p,v) + u(p)$. Now, given a compact part $\gamma: [0,T] \longrightarrow M$ 
of a geodesic of $(M,g)$, the collection of potentials $u: M\rightarrow {\mathbb R}$ such that $\gamma[0,T]$ 
is still a geodesic of $L(p,v) = L_{g}(p,v) +u(p)$ contains the functions whose gradients vanish along the subset of 
$T_{\gamma(t)}M$ which are perpendicular to $\gamma'(t)$ for every $t \in [0,T]$ (see for instance \cite[Lemma 2.1]{ruggiero91-2}). Lagrangian perturbations 
of Tonelli Lagrangians of the type $L_{h}(p,v) = L_{g}(p,v) + u(p)$ were used extensively by R. Ma\~{n}\'{e} to study generic properties of Tonelli Lagrangians and applications to Aubry-Mather theory (see for instance \cite{kn:Mane1,kn:Mane2}). Ma\~{n}\'{e}'s idea proved to be 
very fruitful and insightful in Lagrangian generic theory, and opened a new branch of generic theory 
that is usually called Ma\~{n}\'{e}'s genericity. Recently, Rifford-Ruggiero \cite{rr12} gave a proof of Klingenberg-Takens and 
Anosov $C^{1}$ genericity results for closed geodesics using control theory techniques applied to the class of Ma\~{n}\'{e} type 
perturbations of Lagrangians. Control theory ideas simplify a great deal the technical problems involved in metric perturbations 
and at the same time show that Ma\~{n}\'{e} type perturbations attain full Hamiltonian genericity. This result, combined with a previous theorem by Oliveira \cite{oliveira08} led to the  Kupka-Smale Theorem for geodesic flows in the family of conformal perturbations of metrics. 

These promissing applications of control theory to the generic theory of geodesic flows motivate us to study Frank's Lemma for conformal perturbations of Riemannian metrics or equivalently, for Ma\~{n}\'{e} type perturbations of Riemannian Lagrangians. 
Before stating our main theorem, let us recall first some notations and 
basic results about geodesic flows. The geodesic flow of 
a Riemannian manifold $(M,g)$ will be denoted by $\phi_{t}$, the flow acts on the unit tangent bundle $T_{1}M$, a point 
$\theta \in T_{1}M$ has canonical coordinates $\theta = (p,v)$ where $p \in M$, $v \in T_{p}M$, and  
$\gamma_{\theta}$ denotes the unit speed geodesic with initial conditions $\gamma_{\theta}(0)=p$, $\gamma_{\theta}'(0)=v$. 
Let $N_{\theta} \subset T_{\theta}T_{1}M$ be the plane of vectors which are perpendicular to the geodesic flow with respect to the Sasaki metric  (see for example \cite{sakai}). The collection of these planes is preserved by the action of the differential of the geodesic flow:$ D_{\theta}\phi_{t}(N_{\theta}) = N_{\phi_{t}(\theta)}$ for every $\theta $ and $t \in {\mathbb R}$.

Let us consider a geodesic arc, of length $T$
$$
\gamma_{\theta} : \left[0,T\right] \longrightarrow M,
$$
and let $\Sigma_0$ and  $\Sigma_T$ be local  transverse sections for the geodesic flow which are tangent to 
$N_{\theta}$ and $N_{\phi_{T}(\theta)}$ respectively.  Let $\P_g(\Sigma_0,\Sigma_T,\gamma)$ be a Poincar\'e map going from $\Sigma_0$ to $\Sigma_T$.  In horizontal-vertical coordinates of $N_{\theta}$, the differential $D_{\theta}\phi_{T}$ that is 
the \textit{linearized Poincar\'e map}
$$
P_g(\gamma)(T):=D_{\theta}\P_g(\Sigma_0,\Sigma_T,\gamma)
$$
is a symplectic endomorphism of $\mathbb{R}^{(2n-2)} \times \mathbb{R}^{(2n-2)}$. This endomorphism 
can be expressed in terms of the Jacobi fields of $\gamma_{\theta}$ which are perpendicular to $\gamma_{\theta}'(t)$ for every $t$: 
$$
P_g(\gamma)(T)(J(0),\dot{J}(0))=(J(T),\dot{J}(T)),
$$
where $\dot{J}$ denotes
the covariant derivative along the geodesic. We can identify the set of all symplectic endomorphisms of $\mathbb{R}^{2n-2} \times \mathbb{R}^{2n-2}$ with the symplectic group 
$$
\mbox{Sp}(n-1):= \Bigl\{X \in \R^{(2n-2) \times (2n-2)} ; X^*\J X=\J \Bigr\},  
$$
where $X^*$ denotes the transpose of $X$ and
$$
\J= \left[ \begin{matrix}
0&I_{n-1}\\
-I_{n-1}&0\\
\end{matrix} \right].
$$
Given a geodesic $\gamma_{\theta}:[0,T] \rightarrow M$, an interval $[t_1,t_2] \subset [0,T]$ and $\rho>0$, we denote by $\mathcal{C}_g\left(\gamma_{\theta}\bigl( [t_1,t_2]\bigr);\rho\right)$ the open geodesic cylinder along $\gamma_{\theta}\bigl( [t_1,t_2]\bigr)$ of radius $\rho$, that is the open set defined by
\begin{multline*}
\mathcal{C}_g\left(\gamma_{\theta}\bigl( [t_1,t_2]\bigr);\rho\right) := \\
\Bigl\{ p \in M \, \vert \, \exists t\in (t_1,t_2) \mbox{ with } d_g\bigl(p,\gamma_{\theta}(t)\bigr)<\rho \mbox{ and }  d_g\bigl(p,\gamma_{\theta}([t_1,t_2])\bigr) = d_g\bigl(p,\gamma_{\theta}(t)\bigr) \Bigr\},
\end{multline*}
where $d_g$ denotes the geodesic distance with respect to $g$.  Our main result is the following.

\begin{theorem}[Franks' Lemma] \label{THMmain}
Let $(M,g)$ be a smooth compact Riemannian manifold of dimension $\geq 2$. For every $T>0$ there exist $\delta_{T}, \tau_T, K_T>0$ such that the following property holds:\\
For every geodesic $\gamma_{\theta}:[0,T] \rightarrow M$,  there are $\bar{t}\in [0,T-\tau_T]$ and $\bar{\rho}>0$ with 
$$
\mathcal{C}_g\Bigl( \gamma_{\theta}\left( \bigl[ \bar{t},\bar{t}+\tau_T\bigr]\right) ;\bar{\rho} \Bigr) \cap \gamma_{\theta}([0,T]) = \gamma_{\theta}\left( \bigl( \bar{t},\bar{t}+\tau_T\bigr)\right),
$$
such that for every $\delta \in (0, \delta_{T})$, for each symplectic map $A$ in the open ball  (in $\mbox{Sp}(n-1)$) centered at  $P_g(\gamma)(T)$ of radius $\delta$ and for every $\rho \in (0,\bar{\rho})$, there exists a $C^{\infty}$ metric $h$ in $M$ that is conformal to $g$, $h_{p}(v,w) = (1+\sigma(p))g_{p}(v,w)$, such that: 
\begin{enumerate}
\item the geodesic $\gamma_{\theta} : [0,T] \longrightarrow M$ is still a geodesic of $(M,h)$, 
\item $\mbox{Supp} (\sigma) \subset \mathcal{C}_g\left( \gamma_{\theta}\left( \bigl[ \bar{t},\bar{t}+\tau_T\bigr]\right) ;\rho \right)$,
\item $P_{h}(\gamma_{\theta})(T) = A$,
\item the $C^{2}$ norm of the function $\sigma$ is less than $K_T \sqrt{\delta}$.
\end{enumerate}
\end{theorem}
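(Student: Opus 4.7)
\medskip

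\noindent\textbf{Overall strategy.} The proof naturally splits into a geometric part --- isolating a ``free'' piece of $\gamma_\theta$ on which we are allowed to perturb --- and an analytic, control-theoretic part --- realising every nearby symplectic map by a conformal perturbation supported in a tube of that piece. For the geometric part, I would prove the existence of $\tau_T,\bar\rho>0$, depending only on $T$ and $(M,g)$, such that every $\gamma_\theta:[0,T]\to M$ admits a sub-interval $[\bar t,\bar t+\tau_T]$ with
$$
\mathcal{C}_g\bigl(\gamma_\theta([\bar t,\bar t+\tau_T]);\bar\rho\bigr)\cap\gamma_\theta([0,T])=\gamma_\theta\bigl((\bar t,\bar t+\tau_T)\bigr).
$$
This would follow from a pigeon-hole/volume argument: cut $[0,T]$ into $N\simeq T/\tau_T$ consecutive pieces of length $\tau_T$ and use the uniform lower bound on the injectivity radius and the uniform bound on sectional curvature of $(M,g)$ to show that, for $\tau_T,\bar\rho$ small enough in terms of $T$ and $\mathrm{vol}(M)$, at least one such piece must remain $\bar\rho$-isolated from $\gamma_\theta([0,T])$.

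\medskip

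\noindent\textbf{Reduction to an end-point map in $\mathrm{Sp}(n-1)$.} Since $\sigma$ is supported in the cylinder around the free arc, the Jacobi equation along $\gamma_\theta$ is unchanged outside $[\bar t,\bar t+\tau_T]$; by the chain rule, realising the target $A$ reduces to realising an appropriate symplectic map close to the original Poincar\'e map of the sub-arc. In Fermi coordinates along this arc, the normal Jacobi equation reads $\ddot J(t)+R(t)J(t)=0$; as recalled in the Introduction, $\gamma_\theta$ remains a geodesic of $h=(1+\sigma)g$ under the sufficient condition $\sigma|_{\gamma_\theta}=0$ and $\nabla\sigma|_{\gamma_\theta}=0$, and in this case the admissible infinitesimal change in the curvature matrix is, up to a bounded invertible linear map, the transverse Hessian $U(t)$ of $\sigma$ along $\gamma_\theta(t)$. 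The whole question therefore reduces to the end-point map
$$
U(\cdot)\ \longmapsto\ X_U(\bar t+\tau_T)\ \in\ \mathrm{Sp}(n-1),
$$
where $X_U$ is the fundamental matrix solution of the matrix Jacobi equation with curvature $R(t)+U(t)$ and $U$ ranges over symmetric $(n-1)\times(n-1)$-matrix-valued functions on $[\bar t,\bar t+\tau_T]$.

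\medskip

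\noindent\textbf{Quantitative surjectivity, the $\sqrt{\delta}$ bound, and the main obstacle.} I would then invoke, following Rifford--Ruggiero~\cite{rr12}, that this end-point map is a submersion at $U=0$: together with the Jacobi drift, the symmetric-matrix controls $U$ bracket-generate $\mathfrak{sp}(n-1)$ in a bounded number of steps, and compactness of $T_1M$ provides a right-inverse whose norm is uniform in $\theta$. A quantitative inverse-function or Brouwer fixed-point argument then yields, for every $A$ at distance $<\delta$ from $P_g(\gamma_\theta)(T)$, a control $U_A$ with $\|U_A\|_\infty\le C_T\sqrt{\delta}$ realising it; the square-root is the standard two-step Chow--Rashevsky loss, incurred because some directions in $\mathrm{Sp}(n-1)$ are reached only via brackets of the controls with the drift, so that the effective reachable radius is quadratic in $\|U\|_\infty$ and must dominate $\delta$. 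A bump construction of the form
$$
\sigma(p):=\chi\!\left(\mathrm{dist}_g(p,\gamma_\theta)/\rho\right)\,\tfrac12\,\bigl\langle U_A(\tau(p))\,\pi_\perp(p),\pi_\perp(p)\bigr\rangle,
$$
with $\tau(p),\pi_\perp(p)$ the Fermi time and transverse projection of $p$ and $\chi$ a fixed $C^\infty$ cutoff, then delivers a $\sigma$ with $\sigma|_{\gamma_\theta}=0$, $\nabla\sigma|_{\gamma_\theta}=0$, prescribed transverse Hessian $U_A$ on $\gamma_\theta$, and $\|\sigma\|_{C^2}\lesssim\|U_A\|_\infty\le K_T\sqrt{\delta}$. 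The hard part, as I see it, is precisely the \emph{uniformity in $\theta$ and $T$} of the quantitative surjectivity: one must guarantee that the bracket-generating argument delivers a right-inverse of uniform norm on every $\tau_T$-length Jacobi drift that can appear, and that the quadratic remainder of the end-point map is bounded by a constant depending only on the curvature bounds of $(M,g)$. This is the technical heart of the proof; once it is in place, the $\sqrt{\delta}$ dependence and the $C^2$ bound on $\sigma$ follow routinely from the bump construction above.
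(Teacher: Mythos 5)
Your geometric reduction (Lemma on free arcs, Fermi coordinates, translating the problem to the transverse Hessian $U(t)$ as control in the matrix Jacobi equation) and your bump construction for $\sigma$ match the paper's Section~4. But the control-theoretic core of your proposal has a genuine gap, and it sits precisely where the paper's contribution lies.

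You claim, ``following Rifford--Ruggiero,'' that the end-point map $U \mapsto X_U(\bar t+\tau_T)$ is a submersion at $U=0$, i.e.\ controllable at first order, with the Lie brackets of the drift and the controls spanning $\mathfrak{sp}(n-1)$ uniformly. This is exactly what \emph{fails} in general. As the paper emphasizes, Contreras's first-order result requires an extra assumption on the curvatures along $\gamma_\theta$, and ``the control system which is relevant in the present paper is not always controllable at first order.'' When $D_0E^{I,T}$ is not surjective, the inverse-function/Brouwer argument you outline does not produce any control $U_A$, and your construction stops. The paper's whole point (Propositions~\ref{LIEPROP3} and~\ref{LIEPROP3bis}) is to prove \emph{second-order} local controllability uniformly, using the Agrachev-type openness criterion (Theorem~\ref{THMopenquant}) together with the algebraic spanning condition involving $[B_i^1(0),B_j^1(0)]$; this requires the index estimate of Lemma~\ref{LEM30oct} and the technical Lemma~\ref{LEMtechnical1nov} in the appendix, none of which appear in your sketch.

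Your own $\sqrt{\delta}$ bound also betrays the inconsistency: if the end-point map were a submersion with a uniformly bounded right inverse, a quantitative inverse function theorem would give the \emph{linear} estimate $\|U_A\| \lesssim \delta$, not $\sqrt{\delta}$. The square-root is not a ``Chow--Rashevsky loss from bracketing controls with the drift'' (those brackets, the $B_i^j(\bar t)$, still give a linear estimate when they span); it is the signature of a critical point of corank $\geq 1$, where one only has openness at quadratic rate via $[B_i^1,B_j^1]$ directions. So the numerology in your sketch is correct, but only because it secretly assumes the second-order theory that you have not supplied. You would need to prove that the quadratic form $\lambda^*(D_0^2 E^{I,T})|_{\ker D_0 E^{I,T}}$ has large negative index for every annihilator $\lambda$, uniformly in $\theta$, which is the hard part of the paper.

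A minor further point: you assert $\bar\rho$ can be chosen to depend only on $T$ and $(M,g)$. The theorem only claims, and Lemma~\ref{tauT} only proves, $\bar\rho = \bar\rho(\gamma_\theta)$; the paper's argument uses a self-intersection count for geodesics of length $kr_g$ and does not, and need not, produce a $\theta$-uniform $\bar\rho$.
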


Theorem \ref{THMmain} improves a previous result by Contreras \cite[Theorem 7.1]{contreras10} which gives a controllability result at first order under an additional assumption on the curvatures along the initial geodesic. Other proofs of Contreras Theorem can also be found in \cite{visscher14} and \cite{lazrag14}. The Lazrag proof follows already the ideas from geometric control introduced in \cite{rr12} to study controllability properties at first order. Our new Theorem \ref{THMmain} shows that controllability holds at second order without any assumption on curvatures along the geodesic. Its proof amounts to study how small conformal perturbations of the metric $g$ along $\Gamma:=\gamma([0,T])$ affect the differential of $\P_g(\Sigma_0,\Sigma_T,\gamma)$. This can be seen as  a problem of local controllability along a reference trajectory in the symplectic group. As in \cite{rr12}, The idea is to see the Hessian of the conformal factor along the initial geodesic as a control and to obtain Theorem \ref{THMmain} as a uniform controllability result at second order for a control system of the form
$$
\dot{X}(t)  = A(t) X(t) +  \sum_{i=1}^k u_i(t) B_i  X(t), \qquad \mbox{for a.e.} \quad  t,
$$
in the symplectic group $\mbox{Sp}(n-1)$. \\

We apply Franks' Lemma  to extend some results concerning the characterization of hyperbolic geodesic flows in terms 
of the persistence of some $C^{1}$ generic properties of the dynamics. These results are based on well known steps towards the 
proof of the $C^{1}$ structural stability conjecture for diffeomorphisms. 

Let us first introduce some notations. Given a smooth compact Riemannian manifold $(M,g)$, we say that a property $P$ of the 
geodesic flow of $(M,g)$ is $\epsilon$-$C^{k}$-persistent from Ma\~{n}\'{e}'s viewpoint if for every $C^{\infty}$ function $f: M\longrightarrow {\mathbb R}$ whose $C^{k}$ norm is less than $\epsilon$ we have that the geodesic flow of the metric $(M,(1+f)g)$ has property $P$ as well.  By Maupertuis' principle, this is equivalent to the existence of an open $C^{k}$-ball of radius $\epsilon'>0$ of functions $q: M \longrightarrow {\mathbb R}$ such that for every $C^{\infty}$ function in this open ball the 
Euler-Lagrange flow of the Lagrangian $L(p,v) = \frac{1}{2}g_{p}(v,v) - q(p)$ in the level of energy equal to 1 has property $P$.  This definition is inspired by the definition of $C^{k-1}$ persistence for diffeomorphisms: a property $P$ of a diffeomorphism 
$f:M\longrightarrow M$ is called $\epsilon$-$C^{k-1}$ persistent if the property holds for every diffeomorphism in the 
$\epsilon$-$C^{k-1}$ neighborhood of $f$. It is clear that if a property $P$ is $\epsilon$-$C^{1}$ persistent for a geodesic flow then the property $P$ is 
$\epsilon'$-$C^{2}$ persistent from Ma\~{n}\'{e}'s viewpoint for some $\epsilon'$.

\begin{theorem} \label{THM1}
Let $(M,g)$ be a smooth compact Riemannian manifold of dimension $\geq 2$ such that the periodic orbits of the geodesic flow are $C^{2}$-persistently hyperbolic from
Ma\~{n}\'{e}'s viewpoint. Then the closure of the set of periodic orbits of the geodesic flow is a hyperbolic set.
\end{theorem}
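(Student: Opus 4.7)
The plan is to argue by contradiction, mimicking the strategy Ma\~{n}\'{e} used to derive his analogous result for $C^{1}$-persistently hyperbolic periodic points of diffeomorphisms. Suppose the closure $\Lambda$ of the set of periodic orbits of the geodesic flow $\phi_{t}$ of $(M,g)$ fails to be hyperbolic. The goal is to construct, for any prescribed $\epsilon>0$, a $C^{\infty}$ conformal factor $\sigma$ on $M$ with $\|\sigma\|_{C^{2}}<\epsilon$ such that the geodesic flow of $h=(1+\sigma)g$ carries a non-hyperbolic periodic orbit. Since $g$ is assumed $C^{2}$-persistently hyperbolic from Ma\~{n}\'{e}'s viewpoint, this contradicts the hypothesis and establishes the theorem.

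First, from the failure of uniform hyperbolicity on $\Lambda$ I would extract a sequence of periodic orbits $\gamma_{n}$ of period $T_{n}\to\infty$ whose linearized Poincar\'e maps are ``barely hyperbolic'' in a quantitative sense. The classical route is a symplectic Pliss/Oseledets dichotomy: the failure of uniform hyperbolicity on $\Lambda$ combined with density of periodic orbits inside $\Lambda$ produces $\gamma_{n}$ together with a segmentation of each $\gamma_{n}$ into pieces of a \emph{fixed} length $\tau_{\ast}$, giving a factorization
$$
P_{g}(\gamma_{n})(T_{n})=Q^{n}_{k_{n}}\cdots Q^{n}_{1},
$$
such that small perturbations $\widetilde{Q}^{n}_{j}\in\mbox{Sp}(n-1)$ with $\|\widetilde{Q}^{n}_{j}-Q^{n}_{j}\|<\eta_{n}$ and $\eta_{n}\downarrow 0$ can be chosen so that the composed product $\widetilde{P}_{n}=\widetilde{Q}^{n}_{k_{n}}\cdots\widetilde{Q}^{n}_{1}$ has an eigenvalue on the unit circle. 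One arranges, in addition, that the $\tau_{\ast}$-segments along $\gamma_{n}$ are mutually well-separated in $M$ in the sense required by the non-intersection condition of Theorem~\ref{THMmain}.

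Second, I would apply Theorem~\ref{THMmain} locally, once for each factor, with the time parameter fixed at $T=\tau_{\ast}$, to realize the perturbation $Q^{n}_{j}\to\widetilde{Q}^{n}_{j}$ by a conformal perturbation of $g$ supported in a thin cylinder around the $j$-th segment of $\gamma_{n}$. The theorem supplies conformal factors $\sigma^{n}_{j}$ with pairwise disjoint supports (by the separation condition above) and with $\|\sigma^{n}_{j}\|_{C^{2}}\le K_{\tau_{\ast}}\sqrt{\eta_{n}}$. Setting $\sigma_{n}=\sum_{j}\sigma^{n}_{j}$, the disjointness of supports gives $\|\sigma_{n}\|_{C^{2}}\le K_{\tau_{\ast}}\sqrt{\eta_{n}}$, which is less than $\epsilon$ once $n$ is large, and crucially $K_{\tau_{\ast}}$ is independent of $n$. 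By conclusion~(1) of Theorem~\ref{THMmain} the curve $\gamma_{n}$ is still a geodesic of $(M,(1+\sigma_{n})g)$, while by conclusion~(3) its new linearized Poincar\'e map is $\widetilde{P}_{n}$, hence non-hyperbolic by construction.

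The main obstacle is the linear-algebraic step of the first paragraph: a symplectic analogue of Ma\~{n}\'{e}'s lemma asserting that non-uniform hyperbolicity of periodic orbits is detected by arbitrarily small perturbations of a fixed-length factorization of their linearized Poincar\'e maps. One has to track pairs of conjugate eigenvalues and make sure all perturbations stay inside $\mbox{Sp}(n-1)$; it is precisely here that the \emph{second-order}, curvature-free controllability furnished by Theorem~\ref{THMmain} is essential, since no open curvature condition as in \cite{contreras10} can be imposed along the approximating orbits. Once this algebraic ingredient is in place, the reduction above combined with Franks' lemma closes the proof.
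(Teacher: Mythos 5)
Your overall strategy -- realizing symplectic perturbations of linearized Poincar\'e maps along periodic geodesics by conformal metric changes via Theorem~\ref{THMmain}, combined with a Ma\~n\'e-type linear-algebra argument for uniformly hyperbolic families of symplectic sequences -- is indeed the right framework, and it matches the structure of the paper (Lemma~\ref{Franks1}, Theorem~\ref{persistentlinearhyp}, Theorem~\ref{persistentdomination}). But there is a genuine gap in the first step of your contradiction argument, and a secondary problem in the second step.

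The crucial omission is the geometric upgrade from domination to hyperbolicity. You want to deduce from ``$\Lambda$ is not hyperbolic'' that the factorizations of the linearized Poincar\'e maps along periodic orbits can be broken by arbitrarily small symplectic perturbations (i.e., that Ma\~n\'e's family $\psi_g$ is \emph{not} uniformly hyperbolic). The contrapositive of this, ``$\psi_g$ uniformly hyperbolic $\Rightarrow$ $\Lambda$ hyperbolic,'' does not follow from Ma\~n\'e's Lemma~II.3 alone: that lemma (the paper's Theorem~\ref{persistentlinearhyp}) only yields a continuous \emph{dominated} splitting with uniform two-sided estimates, and a dominated splitting is not hyperbolic in general. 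The passage from dominated to hyperbolic is a separate geometric fact specific to geodesic (more generally, Hamiltonian) flows -- Theorem~\ref{domination} (a result of \cite{ruggiero91}; see also \cite{kn:Contreras}) -- which says that any continuous, Lagrangian, invariant, dominated splitting on a compact invariant set of a geodesic flow is automatically hyperbolic. Without invoking this, your deduction does not close: a priori one could have a uniformly hyperbolic family of factorizations whose limiting dominated splitting on $\Lambda$ fails to be hyperbolic, and no amount of perturbation of periodic orbits would detect it.

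The second problem is the disjoint-support requirement. You propose to apply Theorem~\ref{THMmain} once per $\tau_{\ast}$-segment of $\gamma_n$ and sum the resulting conformal factors, asserting that the segments can be arranged to be ``mutually well-separated.'' For a long periodic geodesic this is generally false: the orbit self-intersects, and Lemma~\ref{tauT} only guarantees, for a single arc of length $T$, that \emph{one} sub-segment has a cylinder that avoids the rest of that arc -- not that a prescribed family of segments along the whole orbit admits pairwise disjoint tubular neighborhoods. In Ma\~n\'e's linear algebra this does not matter because the destabilizing perturbation is localized at a \emph{single} factor of the periodic sequence (see the proof of Lemma~\ref{angle}, where only $\xi_0=\eta_0\left( \begin{smallmatrix} I & C \\ 0 & I \end{smallmatrix}\right)$ is changed), so only one application of Franks' lemma is needed. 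Your phrasing in terms of a ``symplectic Pliss/Oseledets dichotomy'' also does not match the actual mechanism; the argument is Ma\~n\'e's notion of uniformly hyperbolic families of periodic symplectic sequences.
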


An interesting application of Theorem \ref{THM1} is the following extension of Theorem A in \cite{ruggiero91}:
$C^{1}$ persistently expansive geodesic flows in the set of Hamiltonian flows of $T_{1}M$ are Anosov flows. We recall that a non-singular smooth flow $\phi_{t}: Q \longrightarrow  Q$ acting on a
complete Riemannian manifold $Q$ is {\it $\epsilon$-expansive} if given $x \in Q$ we have that for each $y \in Q$ such that there exists a continuous surjective function
$\rho: {\mathbb R} \longrightarrow {\mathbb R}$ with $\rho(0)=0$ satisfying
$$ 
d \left(\phi_{t}(x), \phi_{\rho(t)}(y)\right) \leq \epsilon \qquad \forall t \in \R, 
$$
for every $ t \in {\mathbb R}$ then there exists $t(y)$, $\mid t(y) \mid <\epsilon$ such that $\phi_{t(y)}(x)=y$.
A smooth non-singular flow is called expansive if it is expansive for some $\epsilon >0$. Anosov flows are expansive, and it is not difficult to get examples which show that the converse of this statement is not true. Theorem \ref{THM1} yields the following.

\begin{theorem} \label{THM2}
Let $(M,g)$ be a smooth compact Riemannian manifold, suppose that either $M$ is a surface or $\dim M \geq 3$ and $(M,g)$ has no conjugate points. Assume that the geodesic flow is $C^{2}$ persistently expansive from Ma\~{n}\'{e}'s viewpoint, then the geodesic flow is Anosov.
\end{theorem}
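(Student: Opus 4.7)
The plan is to reduce Theorem \ref{THM2} to Theorem \ref{THM1}. The key reduction step is to show that $C^{2}$-persistent expansiveness from Ma\~n\'e's viewpoint implies $C^{2}$-persistent hyperbolicity of closed geodesics; Theorem \ref{THM1} then furnishes a hyperbolic structure on the closure of the periodic orbits, and one concludes by density of periodic orbits, using the geometric hypotheses.

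First I would verify that any expansive smooth non-singular flow on a compact manifold has only hyperbolic periodic orbits. Indeed, if a periodic orbit had a linearized Poincar\'e map with an eigenvalue of modulus one distinct from the trivial eigenvalue in the flow direction, one would produce either nearby periodic orbits of the same period (elliptic case) or two distinct orbits whose separation grows arbitrarily slowly (parabolic case), both of which contradict expansiveness. Applied to every metric $(1+f)g$ with $\|f\|_{C^{2}}$ small, this shows that the geodesic flow is $C^{2}$-persistently hyperbolic on periodic orbits from Ma\~n\'e's viewpoint. By Theorem \ref{THM1}, $\overline{\mathrm{Per}(\phi_{t})}$ is a hyperbolic set of $\phi_{t}$.

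To conclude that $\phi_{t}$ is Anosov it suffices to prove $\overline{\mathrm{Per}(\phi_{t})}=T_{1}M$. This is where the dichotomy in the hypothesis enters. For a compact surface carrying an expansive geodesic flow, a theorem of Ruggiero guarantees the absence of conjugate points, so in either alternative we may assume $(M,g)$ has no conjugate points. Under that additional assumption, the techniques underlying Theorem A of \cite{ruggiero91} (topological dynamics of expansive geodesic flows on manifolds without conjugate points) propagate the hyperbolicity from $\overline{\mathrm{Per}(\phi_{t})}$ to the whole unit tangent bundle: expansiveness forbids non-hyperbolic behaviour away from the closure of periodic orbits, and no conjugate points combined with expansiveness forces periodic orbits to be dense. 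This gives $\overline{\mathrm{Per}(\phi_{t})}=T_{1}M$, which is therefore a hyperbolic set, and $\phi_{t}$ is Anosov.

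The main obstacle is precisely the density of periodic orbits: Theorem \ref{THM1} only provides a hyperbolic structure on $\overline{\mathrm{Per}(\phi_{t})}$, and it is the geometric assumptions (surface, or no conjugate points in dimension $\geq 3$) that allow one to bootstrap from hyperbolicity on that set to density in all of $T_{1}M$. The surface case carries the additional preliminary step that expansiveness of the geodesic flow on a compact surface rules out conjugate points; the rest of the argument then runs identically in both alternatives.
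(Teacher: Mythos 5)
Your overall strategy -- reduce to Theorem \ref{THM1} and then invoke density of periodic orbits -- is the same as the paper's, and your observation that the density step is the place where the surface/no-conjugate-points hypothesis enters is correct. However, there is a genuine gap in the step where you claim that every expansive smooth non-singular flow has only hyperbolic periodic orbits. That statement is false as a general fact about a \emph{single} flow: an isolated periodic orbit whose linearized Poincar\'e map has an eigenvalue on the unit circle need not force nearby periodic orbits (in the elliptic case, nearby periodic orbits require a twist/non-degeneracy condition in the Birkhoff normal form; in the parabolic case, higher-order terms can still produce divergence compatible with expansiveness). Your heuristic ``elliptic $\Rightarrow$ nearby periodic orbits, parabolic $\Rightarrow$ slowly separating orbits'' does not go through without a generic normal form, so one cannot deduce $C^{2}$-persistent hyperbolicity of periodic orbits just by applying ``expansive $\Rightarrow$ hyperbolic'' to each nearby conformal metric.

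This is exactly the obstacle that the paper's Theorem \ref{periodichyp} (interior of $E^{2}(M,g)$ is contained in $F^{2}(M,g)$) is designed to overcome, and its proof genuinely needs the persistence hypothesis to \emph{perturb}. The argument is: if some metric in the interior of $E^{2}(M,g)$ had a nonhyperbolic closed orbit, then by the Ma\~n\'e-genericity results of Rifford--Ruggiero \cite{rr12} one may make a $C^{2}$-small conformal perturbation preserving the orbit and putting the linearized Poincar\'e map in general position on the unit circle; by the center manifold theorem of Hirsch--Pugh--Shub \cite{hps77} and the Ma\~n\'e-generic Klingenberg--Takens theorem of Carballo--Gon\c{c}alves \cite{kn:CG} one then gets a generic Birkhoff normal form, and the Birkhoff--Lewis fixed point theorem (Moser \cite{kn:Moser}) yields infinitely many nearby closed orbits, contradicting expansiveness of the perturbed flow. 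Once this is established, the rest of your outline (Theorem \ref{THM1} gives hyperbolicity on $\overline{\mathrm{Per}(\phi_t)}$, and density of periodic orbits on surfaces, respectively in arbitrary dimension without conjugate points, closes the argument) matches the paper. You should replace your ``expansiveness directly implies hyperbolicity'' claim with this perturbative argument, since the persistence hypothesis cannot be discarded at that stage.
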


The proof of the above result requires the set of periodic orbits to be dense. Such a result follows from expansiveness on surfaces \cite{ruggiero91} and from the absence of conjugate points in any dimension. If we drop the assumption of the absence of conjugate points we do not know whether periodic orbits of expansive geodesic flows 
are dense (and so if the geodesic flow in Theorem \ref{THM2} is Anosov). This is a difficult, challenging problem. \\

The paper is organized as follows. In the next section, we introduce some preliminaries which describe the relationship between local controllability and some properties of the End-Point mapping and we introduce the notions of local controllability at first and second order. We recall a result of controllability at first order (Proposition \ref{LIEPROP1}) already used in \cite{rr12} and state results (Propositions \ref{LIEPROP3} and \ref{LIEPROP3bis}) at second order whose long and technical proofs are given in Sections \ref{ProofLIEPROP3} and \ref{ProofLIEPROP3bis}. In Section \ref{proofTHMmain}, we provide the proof of Theorem \ref{THMmain} and the proof of theorems \ref{THM1}, \ref{THM2} are given in Section \ref{proofTHM1THM2}.  

\section{Preliminaries in control theory}\label{prelcontrol}

Our aim here is to provide sufficient conditions for first and second order local controllability results. This kind of results could be developed for nonlinear control systems on smooth manifolds. 
For sake of simplicity, we restrict our attention here to the case of affine control systems on the set of (symplectic) matrices. We refer the interested reader to \cite{as04,coron07,lazragthesis,jurjevic97,riffordbook} for a further study in control theory.

\subsection{The End-Point mapping}

Let  us a consider a \textit{bilinear control system} on $M_{2m}(\R)$ (with $m, k \geq 1$), of the form
\begin{eqnarray}
\label{syscontrol}
\dot{X}(t)  = A(t) X(t) +  \sum_{i=1}^k u_i(t) B_i  X(t), \qquad \mbox{for a.e.} \quad  t,
\end{eqnarray}
where the \textit{state} $X(t)$ belongs to $M_{2m}(\R)$, the \textit{control} $u(t)$ belongs to $\R^k$, $t \in [0,T] \mapsto A(t)$ (with $T>0$) is a smooth map valued in $M_{2m}(\R)$, 
and $B_1, \ldots, B_k$ are $k$ matrices in $M_{2m}(\R)$. Given $\bar{X}\in M_{2m}(\R)$ and $\bar{u} \in L^2 \bigl([0,T]; \R^k\bigr)$, the Cauchy problem
\begin{eqnarray}
\label{cauchyX}
\dot{X} (t) = A(t) X (t) + \sum_{i=1}^k \bar{u}_i(t) B_i X(t) \quad \mbox{for a.e.} \quad  t \in [0,T], \qquad X(0) = \bar{X},
\end{eqnarray}
possesses a unique solution $X_{\bar{X},\bar{u}}(\cdot)$. The \textit{End-Point mapping} associated with $\bar{X}$ in time $T>0$ is defined as
$$
\begin{array}{rcl}
E^{\bar{X},T} \, : \, L^2 \bigl([0,T]; \R^k\bigr)  & \longrightarrow & M_{2m}(\R) \\
u & \longmapsto & X_{\bar{X},u}(T).
\end{array}
$$
It is a smooth mapping whose differential can be expressed in terms of the linearized control system (see \cite{riffordbook}). Given $\bar{X}\in M_{2m}(\R)$, $\bar{u} \in L^2 \bigl([0,T]; \R^k\bigr)$, and setting $\bar{X} (\cdot) :=  X_{\bar{X},\bar{u}}(\cdot)$, the differential of $E^{\bar{X}} $ at $\bar{u}$ is given by the 
linear operator
$$
\begin{array}{rccc}
D_{\bar{u}}E^{\bar{X},T}  \, : \, &L^2\bigl([0,T]; \R^k\bigr) & \longrightarrow & M_{2m}(\R) \\
&v & \longmapsto & Y(T),
\end{array}
$$
where $Y(\cdot)$ is the unique solution to the linearized Cauchy problem
\begin{eqnarray*}
\label{linearized}
\left\{
\begin{array}{l}
\dot{Y}(t) = A(t) Y(t) + \sum_{i=1}^k v_i(t) B_i(t) \bar{X}(t) \quad \mbox{for a.e.} \quad t \in [0,T],\\
Y(0)= 0.
\end{array}
\right.
\end{eqnarray*}
Note that if we denote by $S(\cdot)$ the solution to the Cauchy problem
\begin{equation}
\label{eq:St}
\left\{
\begin{array}{l}
\dot{S}(t) = A(t) S(t)\\
S(0)=I_{2m}
\end{array}
\right.
\qquad \forall t \in [0,T],
\end{equation}
then there holds
\begin{eqnarray}\label{raab}
 D_{\bar{u}}E^{\bar{X},T}  (v) =  \sum_{i=1}^k S(T) \int_0^{T} v_i(t) S(t)^{-1} B_i \bar{X}(t) \,dt,
\end{eqnarray}
for every $v \in L^2([0,T]; \R^k)$. \\

Let $\mbox{Sp}(m)$ be the symplectic group in $M_{2m}(\R)$ ($m\geq 1$), that is the smooth submanifold of matrices $X \in M_{2m}(\R)$ satisfying
$$
X^* \J X = \J \quad  \mbox{ where } \J := \left[ \begin{matrix} 0 & I_m \\ -I_m & 0 \end{matrix} \right].
$$
Denote by $\mathcal{S}(2m)$ the set of symmetric matrices in $M_{2m}(\R)$. The tangent space to  $\mbox{Sp}(m)$ at the identity matrix is given by
$$
T_{I_{2m}} \mbox{Sp}(m) = \Bigl\{ Y \in M_{2m}(\R) \, \vert \, \J Y \in \mathcal{S}(2m)  \Bigr\}.
$$
Therefore, if there holds
\begin{eqnarray}\label{condsymp}
\J A (t), \, \J B_1, \, \ldots, \, \J B_k \in \mathcal{S}(2m) \qquad \forall t \in [0,T],
\end{eqnarray}
then $\mbox{Sp}(m)$ is invariant with respect to (\ref{syscontrol}), that is for every $\bar{X}\in \mbox{Sp}(m)$ and $\bar{u} \in L^2 \bigl([0,T]; \R^k\bigr)$,
$$
X_{\bar{X},u}(t) \in \mbox{Sp}(m) \qquad \forall t \in [0,T].
$$
In particular, this means that for every $\bar{X} \in \mbox{Sp}(m)$, the End-Point mapping $E^{\bar{X},T}$ is valued in $\mbox{Sp}(m)$. Given $\bar{X}\in \mbox{Sp}(m)$ and $\bar{u} \in L^2 \bigl([0,T]; \R^k\bigr)$, 
we are interested in local controllability properties of (\ref{syscontrol}) around $\bar{u}$. The control system  (\ref{syscontrol}) is called \textit{controllable around} $\bar{u}$ in $\mbox{Sp}(m)$ (in time $T$) if 
for every final state $X\in \mbox{Sp}(m)$ close to $X_{\bar{X},\bar{u}}(T)$ there is a control $u\in L^2 \bigl([0,T]; \R^k\bigr)$ which steers $\bar{X}$ to $X$, that is such that $E^{\bar{X},T}(u)=X$. 
Such a property is satisfied as soon as $E^{\bar{X},T}$ is locally open at $\bar{u}$. Our aim in the next sections is to give an estimate from above on the size of $\|u\|_{L^2}$ in terms of $\|X-X_{\bar{X},u}(T)\|$. 

\subsection{First order controllability results}

Given $T>0$, $\bar{X}\in \mbox{Sp}(m)$, a mapping $t\in [0,T] \mapsto A(t) \in M_{2m}(\R)$, $k$ matrices  $B_1, \ldots, B_k \in M_{2m}(\R)$ satisfying (\ref{condsymp}), and $\bar{u} \in L^2 \bigl([0,T]; \R^k\bigr)$, 
we say that the control system (\ref{syscontrol}) is  \textit{controllable at first order around} $\bar{u}$ in $\mbox{Sp}(m)$ if the mapping $E^{\bar{X},T} : L^2 \bigl([0,T]; \R^k\bigr) \rightarrow \mbox{Sp}(m)$ is a 
\textit{submersion } at $\bar{u}$, that is if the linear operator 
$$
D_{\bar{u}}E^{\bar{X},T} \, : \, L^2\bigl([0,T]; \R^k\bigr)  \longrightarrow  T_{\bar{X}(T)} \mbox{Sp}(m),
$$
is surjective (with $\bar{X}(T):=X_{\bar{X},\bar{u}}(T)$). The following sufficient condition for first order controllability is given in \cite[Proposition 2.1]{rr12} (see also \cite{lazrag14,lazragthesis}).

\begin{proposition}
\label{LIEPROP1}
Let $T>0$, $t\in [0,T] \mapsto A(t)$ a smooth mapping and $B_1, \ldots, B_k \in M_{2m}(\R)$ be matrices in $M_{2m}(\R)$ satisfying (\ref{condsymp}). Define the $k$ sequences of smooth mappings
$$
\{B_1^j\}, \ldots, \{B_k^j\} : [0,T] \rightarrow  T_{I_{2m}} \mbox{Sp}(m)
$$
by
\begin{eqnarray}\label{brackets}
\left\{
\begin{array}{l}
B_i^0(t) := B_i\\
B_i^j(t) := \dot{B}_{i}^{j-1}(t) + B_i^{j-1}(t) A(t)  - A(t) B_i^{j-1}(t),
\end{array}
\right.
\end{eqnarray}
for every $t\in [0,T]$ and every $i \in \{1, \ldots, k\}$. Assume that there exists some $\bar{t} \in [0,T]$ such that
\begin{eqnarray}\label{conditionLIEPROP}
\mbox{Span} \Bigl\{ B_i^j(\bar{t}) \, \vert \, i \in \{1,\ldots, k\}, j\in \N\Bigr\} = T_{I_{2m}} \mbox{Sp}(m).
\end{eqnarray}
Then for every $\bar{X} \in \mbox{Sp}(m)$, the control system (\ref{syscontrol}) is controllable at first order around $\bar{u}\equiv 0$.
\end{proposition}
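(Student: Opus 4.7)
The plan is to reduce surjectivity of $D_0 E^{\bar X,T}$ onto $T_{\bar X(T)}\mathrm{Sp}(m)$ to a spanning statement about the ``time-frozen'' vectors $C_i(t):=S(t)^{-1} B_i S(t)$, and then to relate their time-derivatives $C_i^{(j)}$ to the iterated brackets $B_i^j$. With the reference control $\bar u\equiv 0$, the trajectory is $\bar X(t)=S(t)\bar X$, and formula \eqref{raab} rewrites as
\begin{equation*}
D_0 E^{\bar X,T}(v) \;=\; S(T)\Bigl(\int_0^T \sum_{i=1}^k v_i(t) \, C_i(t)\,dt\Bigr) \bar X.
\end{equation*}
Condition \eqref{condsymp} forces $S(t)\in\mathrm{Sp}(m)$ for every $t$ (one checks that $\J A+A^*\J=0$ makes $t\mapsto S^*\J S$ constant), so the map $Z\mapsto S(T)Z\bar X$ is a linear isomorphism from $T_{I_{2m}}\mathrm{Sp}(m)$ onto $T_{\bar X(T)}\mathrm{Sp}(m)$. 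First-order controllability therefore reduces to surjectivity onto $T_{I_{2m}}\mathrm{Sp}(m)$ of
\begin{equation*}
\Phi \,:\, v \,\longmapsto\, \int_0^T \sum_{i=1}^k v_i(t) \, C_i(t)\,dt.
\end{equation*}

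Since the target is finite-dimensional, the next step is a duality argument: $\Phi$ is onto if and only if every linear form $\ell$ on $T_{I_{2m}}\mathrm{Sp}(m)$ with $\ell\circ\Phi\equiv 0$ is the zero form. Such an $\ell$ satisfies $\int_0^T\sum_i v_i(t)\,\ell(C_i(t))\,dt=0$ for every $v\in L^2$, which (since $t\mapsto \ell(C_i(t))$ is smooth) is equivalent to $\ell(C_i(t))\equiv 0$ on $[0,T]$ for every $i$. Differentiating $j$ times at $\bar t$ yields $\ell\bigl(C_i^{(j)}(\bar t)\bigr)=0$ for every $i,j$. Thus it remains only to show that the family $\{C_i^{(j)}(\bar t)\}_{i,j}$ spans $T_{I_{2m}}\mathrm{Sp}(m)$.

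The key remaining ingredient is the identity
\begin{equation*}
B_i^j(t) \;=\; S(t) \, C_i^{(j)}(t) \, S(t)^{-1}, \qquad \forall\, t\in[0,T],\ j\geq 0,
\end{equation*}
which I would prove by induction on $j$: the case $j=0$ is immediate, and the step from $j-1$ to $j$ follows by differentiating $S C_i^{(j-1)} S^{-1}$ using $\dot S=AS$ and $\dot{S^{-1}}=-S^{-1}A$, then comparing with the recursion \eqref{brackets}. Combined with the Ad-invariance of the Lie algebra $T_{I_{2m}}\mathrm{Sp}(m)$ under conjugation by $S(\bar t)\in\mathrm{Sp}(m)$, assumption \eqref{conditionLIEPROP} then yields $\mathrm{Span}\{C_i^{(j)}(\bar t)\}=T_{I_{2m}}\mathrm{Sp}(m)$, finishing the argument. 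I expect the only real bookkeeping to be this inductive identity; the duality step above has the pleasant side-effect of avoiding any analyticity hypothesis on $A$, since it exploits only that derivatives of an identically vanishing smooth function are zero.
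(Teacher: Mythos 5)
Your proof is correct. The paper does not reprove Proposition~\ref{LIEPROP1} here (it cites \cite[Proposition 2.1]{rr12}), but your argument is exactly the one suggested by the surrounding machinery: the reduction via formula~\eqref{raab} and the right-translation $Z\mapsto S(T)Z\bar X$ to surjectivity of $v\mapsto\int_0^T\sum_i v_i(t)\,S(t)^{-1}B_iS(t)\,dt$ onto $T_{I_{2m}}\mathrm{Sp}(m)$, the duality step, and the key identity $\frac{d^j}{dt^j}\bigl(S(t)^{-1}B_iS(t)\bigr)=S(t)^{-1}B_i^j(t)S(t)$ are precisely what the paper invokes in the proof of Lemma~\ref{LEM20sept1}, so you have in effect reconstructed the intended argument.
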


The control system which is relevant in the present paper is not always controllable at first order. We need sufficient condition for controllability at second order.

\subsection{Second-order controllability results}

Using the same notations as above, we say that the control system (\ref{syscontrol}) is  \textit{controllable at second order around} $\bar{u}$ in $\mbox{Sp}(m)$ if there are $ \mu , K >0$ such that 
for every $X \in B \Bigl( \bar{X}(T) ,\mu \Bigr) \cap \mbox{Sp}(m)$, there is $u\in L^2 \bigl([0,T]; \R^k\bigr)$ satisfying 
$$
E^{\bar{X},T}(u) = X \quad \mbox{and} \quad \|u\|_{L^2} \leq K \left| X- \bar{X}(T)\right|^{1/2}.
$$
Obtaining such a property requires a study of the End-Point mapping at second order. Recall that given two matrices $B, B' \in M_{2m}(\R)$, the bracket $[B,B']$ is the matrix of $M_{2m}(\R)$ defined as
$$
[B,B'] := BB'-B'B.
$$
The following results are the key points in the proof of our main theorem. Their proofs will be given respectively in Sections \ref{ProofLIEPROP3} and \ref{ProofLIEPROP3bis}.

\begin{proposition}
\label{LIEPROP3}
Let $T>0$, $t\in [0,T] \mapsto A(t)$ a smooth mapping and $B_1, \ldots, B_k \in M_{2m}(\R)$ be matrices in $M_{2m}(\R)$ satisfying (\ref{condsymp}) such that 
\begin{eqnarray}
\label{ASSBB}
B_iB_j=0 \qquad \forall i,j=1, \ldots, k.
\end{eqnarray}
Define the $k$ sequences of smooth mappings 
$\{B_1^j\}, \ldots, \{B_k^j\} : [0,T] \rightarrow  T_{I_{2m}} \mbox{Sp}(m)$ by (\ref{brackets}) and assume that the following properties are satisfied with $\bar{t}=0$:
\begin{eqnarray}\label{conditionLIEPROP4}
 \left[B_i^j(\bar{t}),B_i \right] \in \mbox{Span}  \Bigl\{B_r^s(\bar{t}) \, \vert \, r =1,..,k, \, s \geq 0  \Bigr\} \qquad \forall i =1, \ldots, k, \, \forall j =1,2,
\end{eqnarray}
and
\begin{eqnarray}\label{conditionLIEPROP3}
\mbox{Span}  \Bigl\{B_i^j(\bar{t}),[B_i^1(\bar{t}),B_l^1(\bar{t})] \, \vert \, i,l = 1,..,k\, and \, j=0,1,2 \Bigr\}= T_{I_{2m}}\mbox{Sp}(m).
\end{eqnarray}
Then, for every $\bar{X} \in \mbox{Sp}(m)$, the control system (\ref{syscontrol}) is controllable at second order around $\bar{u}\equiv 0$. 
\end{proposition}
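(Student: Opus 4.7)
The plan is to expand the End-Point mapping $E := E^{\bar{X},T}$ to second order at $\bar{u}\equiv 0$ and exhibit enough reachable directions, coming both from $\mbox{Im}(D_0 E)$ and from the second differential applied along $\ker D_0 E$, to cover all of $T_{\bar{X}(T)}\mbox{Sp}(m)$. Once this is done, a quantitative second-order open mapping principle closes the argument and yields the square-root estimate $\|u\|_{L^2} \leq K\,|X - \bar{X}(T)|^{1/2}$.

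First I trivialize $\mbox{Sp}(m)$ at $\bar{X}(T) = S(T)\bar{X}$ via the isomorphism $\Xi \mapsto S(T)^{-1}\Xi \bar{X}^{-1}$, which identifies $T_{\bar{X}(T)}\mbox{Sp}(m)$ with $T_{I_{2m}}\mbox{Sp}(m)$. Setting $\tilde{B}_i(t) := S(t)^{-1} B_i S(t)$, a short induction using $\frac{d}{dt}S^{-1} = -S^{-1}A$ gives $\tilde{B}_i^{(j)}(0) = B_i^j(0)$, so the Taylor series of $\tilde{B}_i$ at $0$ encodes exactly the matrices from (\ref{brackets}). Applying Duhamel's formula to $X(t) = S(t)Y(t)$ with $u = \epsilon v$ yields
$$
S(T)^{-1} E^{\bar{X},T}(\epsilon v)\, \bar{X}^{-1} \;=\; I_{2m} + \epsilon\, L(v) + \epsilon^2\, Q(v,v) + o(\epsilon^2),
$$
with
$$
L(v) = \sum_{i=1}^k \int_0^T v_i(t)\, \tilde{B}_i(t)\, dt, \qquad Q(v,v) = \sum_{i,j=1}^k \int_{0 \leq s \leq t \leq T} v_i(t) v_j(s)\, \tilde{B}_i(t) \tilde{B}_j(s)\, ds\, dt.
$$

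Next I read off the reachable directions. Choosing $v$ supported near $\bar{t}=0$ with prescribed moments $\int v_i(t)t^j\,dt$ shows that $W := \mbox{Span}\{B_i^j(0) : i,j\}$ is contained in $\mbox{Im}(L)$. For the second-order directions I split $Q$ into its symmetric and antisymmetric parts under the exchange $(i,t) \leftrightarrow (j,s)$; the antisymmetric part produces the commutators
$$
[\tilde{B}_i(t),\tilde{B}_j(s)] \;=\; \sum_{a,b \geq 0} \frac{t^a s^b}{a!\,b!}\, [B_i^a(0), B_j^b(0)].
$$
Hypothesis (\ref{ASSBB}) forces the leading term $[B_i,B_j]$ to vanish, so the first nontrivial contribution at order $ts$ is the bracket $[B_i^1(0), B_l^1(0)]$. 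Hypothesis (\ref{conditionLIEPROP4}) further ensures that the intermediate brackets $[B_i^j(0),B_i]$ for $j=1,2$ lie in $W$ and hence do not create uncontrollable directions modulo $W$. By taking $v$ as a short oscillating profile near $\bar{t}=0$ with vanishing zeroth moment I arrange $L(v)$ to vanish while $Q(v,v)$ concentrates, modulo $W$, on any prescribed bracket $[B_i^1(0), B_l^1(0)]$. Combining with hypothesis (\ref{conditionLIEPROP3}) then gives
$$
\mbox{Im}(L) + Q(\ker L, \ker L) \;=\; T_{I_{2m}}\mbox{Sp}(m).
$$

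I would conclude by invoking a quantitative second-order open mapping principle (as used in \cite{rr12,lazrag14,lazragthesis} and available in general form in \cite{riffordbook}): if $F : L^2([0,T];\R^k) \to N$ is a $C^2$ map into a finite-dimensional manifold $N$ satisfying $\mbox{Im}(D_0 F) + D_0^2 F(\ker D_0 F,\ker D_0 F) = T_{F(0)} N$, then there exist $\mu, K > 0$ such that every $y \in N$ with $|y - F(0)| < \mu$ can be written $y = F(u)$ for some $u$ with $\|u\|_{L^2} \leq K|y - F(0)|^{1/2}$. Applied to $F = E^{\bar{X},T}$ this gives the proposition. The main obstacle will be the quantitative version of the second step: for each target bracket $[B_i^1(0), B_l^1(0)]$ one must produce an $L^2$-bounded two-parameter family of controls inside $\ker L$ whose second variation covers that direction with a continuous right inverse. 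This rests on a delicate Fubini-type rearrangement of the double integral defining $Q$ combined with integration by parts in the Taylor expansions of $\tilde{B}_i(t) \tilde{B}_j(s)$ near $(t,s)=(0,0)$, and it is precisely here that both $B_iB_j = 0$ and the containments (\ref{conditionLIEPROP4}) are indispensable to annihilate the polluting leading-order terms.
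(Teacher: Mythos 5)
Your calculation of the first- and second-order Taylor expansion of the End-Point map, the identification of $W=\mbox{Span}\{B_i^j(0)\}$ with the first-order reachable set, and the recognition that the brackets $[B_i^1(0),B_l^1(0)]$ are the genuinely new directions produced at second order are all consistent with the paper. The gap is in the concluding step: the ``quantitative second-order open mapping principle'' you invoke is not correct as stated. The criterion
$$
\mbox{Im}(D_0F) + D_0^2F\bigl(\ker D_0F,\ker D_0F\bigr) = T_{F(0)}N
$$
does \emph{not} imply local openness of $F$ at $0$, even qualitatively. Take $F(x,y)=x^2+y^2$ from $\R^2$ to $\R$: here $D_0F=0$, $\ker D_0F=\R^2$, and the values of the second differential on the kernel span all of $\R$, yet the image of any neighborhood of the origin is a half-neighborhood $[0,\varepsilon)$. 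What is actually needed is the negative-index condition of Theorem \ref{THMopen}/\ref{THMopenquant}: for every $\lambda$ in $(\mbox{Im}\,D_0F)^{\perp}\setminus\{0\}$, the quadratic form $\lambda^*\bigl(D_0^2F\bigr)\big|_{\ker D_0F}$ must have negative index at least equal to the corank. Verifying this requires a sign, not a span, and that is exactly where the extra work of the paper goes.

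Concretely, the paper fixes an arbitrary $P\in (\mbox{Im}\,D_0E^{I,T})^{\perp}\setminus\{0\}$, shows via Lemma \ref{LEM20sept1} and hypothesis (\ref{conditionLIEPROP3}) that some $\mbox{tr}\bigl(P^*S(T)[B^1_{\bar i}(0),B^1_{\bar j}(0)]\bigr)$ is nonzero (hence $<0$ after possibly swapping $\bar i, \bar j$), and then uses controls supported on a short interval $[0,\delta]$ whose moments are tuned (Lemma \ref{LEMtechnical1nov}) so that they lie in the kernel of $D_0E^{I,T}$ and make $P\cdot D_0^2E^{I,T}(u)$ strictly negative on a subspace of arbitrarily large dimension (Lemma \ref{LEM30oct}). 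This ``index $\to\infty$'' statement is strictly stronger than your covering claim, and it is what makes Theorem \ref{THMopenquant} applicable. Your sketch conflates producing a prescribed bracket direction with producing a sign-definite quadratic restriction; the passage from the former to the latter is the content of Lemmas \ref{LEM20sept2}, \ref{LEM30oct}, and the technical Lemma \ref{LEMtechnical1nov}, none of which is automatic. You also omit the projection step $F=\Pi\circ E^{I,T}$ needed because $E^{I,T}$ takes values in the submanifold $\mbox{Sp}(m)\subset M_{2m}(\R)$, so Theorem \ref{THMopenquant} cannot be applied to $E^{I,T}$ directly. To salvage the argument, replace the span criterion with the index criterion, keep a fixed annihilator $P$ throughout, and make the sign of $P\cdot D_0^2E^{I,T}$ along your oscillating kernel controls the central object of the estimate.
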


\begin{remark}
For sake of simplicity we restrict here our attention to control systems of the form (\ref{syscontrol}) satisfying (\ref{ASSBB})-(\ref{conditionLIEPROP4}). More general results can be found in \cite{lazragthesis}.
\end{remark}

To prove Theorem \ref{THMmain}, we will need the following parametrized version of Proposition \ref{LIEPROP3} which will follow from the fact that smooth controls with support in $(0,T)$ are dense in $L^2([0,T];\R^k)$ and compactness. 

\begin{proposition}
\label{LIEPROP3bis}
Let $T>0$, and for every $\theta $ in some set of parameters $\Theta$ let $t\in [0,T] \mapsto A^{\theta}(t)$ be a smooth mapping and $B_1^{\theta}, \ldots, B_k^{\theta} \in M_{2m}(\R)$ be matrices in $M_{2m}(\R)$ satisfying (\ref{condsymp}) (with $A(t)=A^{\theta}$) such that 
\begin{eqnarray}
\label{ASSBBbis}
B_i^{\theta}B_j^{\theta}=0 \qquad \forall i,j=1, \ldots, k.
\end{eqnarray}
Define for every $\theta \in \Theta$ the $k$ sequences of smooth mappings 
$\{B_1^{\theta,j}\}, \ldots, \{B_k^{\theta,j}\} : [0,T] \rightarrow  T_{I_{2m}} \mbox{Sp}(m)$ as in (\ref{brackets}) and assume that the following properties are satisfied with $\bar{t}=0$ for every $\theta\in \Theta$:
\begin{eqnarray}\label{conditionLIEPROP4bis}
\quad  \left[B_i^{\theta,j}(\bar{t}),B_i^{\theta} \right] \in \mbox{Span}  \Bigl\{B_r^{\theta,s}(\bar{t}) \, \vert \, r =1,..,k, \, s \geq 0  \Bigr\} \quad \forall i =1, \ldots, k, \, \forall j =1,2,
\end{eqnarray}
and
\begin{eqnarray}\label{conditionLIEPROP3bis}
\quad \mbox{Span}  \Bigl\{B_i^{\theta,j} (\bar{t}),[B_i^{\theta,1}(\bar{t}),B_l^{\theta,1}(\bar{t})] \, \vert \, i,l = 1,..,k\, and \, j=0,1,2 \Bigr\}= T_{I_{2m}}\mbox{Sp}(m).
\end{eqnarray}
Assume moreover, that the sets 
$$
\Bigl\{ B_i^{\theta} \, \vert \, i=1, \ldots, k, \, \theta \in \Theta \Bigr\} \subset M_{2m}(\R)
$$
and
$$
\Bigl\{ t\in [0,T] \mapsto A^{\theta}(t) \, \vert \, \theta \in \Theta \Bigr\} \subset C^2\bigl([0,T];M_{2m}(\R)\bigr)
$$
are compact. Then, there are $ \mu , K >0$ such that for every $\theta \in \Theta$, every $\bar{X}  \in \mbox{Sp}(m)$ and every $ X \in B \Bigl( \bar{X}^{\theta}(T) ,\mu \Bigr) \cap \mbox{Sp}(m)$ ($\bar{X}^{\theta}(T)$ denotes the solution at time $T$ of the control system (\ref{syscontrol}) with parameter $\theta$ starting from $\bar{X}$), there is $u\in C^{\infty} \bigl([0,T]; \R^k\bigr)$ with support in $[0,T]$ satisfying 
$$
E_{\theta}^{\bar{X},T}(u) = X \quad \mbox{and} \quad \|u\|_{C^2} \leq K \left| X- \bar{X}(T)\right|^{1/2}
$$
($E_{\theta}^{\bar{X},T}$ denotes the End-Point mapping associated with the control system (\ref{syscontrol}) with parameter $\theta$).
\end{proposition}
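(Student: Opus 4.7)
The strategy is to combine the pointwise second-order controllability of Proposition \ref{LIEPROP3} with a uniform-continuity argument based on the compactness hypotheses, while refining the construction so that the controls produced lie in $C^\infty_c((0,T);\R^k)$. Fix $\bar{X} \in \mbox{Sp}(m)$; since (\ref{syscontrol}) is right-invariant in the sense that $X_{\bar{X},u}(\cdot) = Y_u(\cdot)\bar{X}$, where $Y_u$ solves the same system with initial condition $I_{2m}$, the problem reduces to the case $\bar{X}=I_{2m}$. For each $\theta$ individually, Proposition \ref{LIEPROP3} then provides constants $\mu_\theta, K_\theta$ and an $L^2$ control $u$ with $E_\theta^{I_{2m},T}(u)=X$ and $\|u\|_{L^2}\leq K_\theta |X-\bar{X}^\theta(T)|^{1/2}$. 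Two improvements remain: the $L^2$-valued control must be replaced by a $C^\infty_c$-valued one with the estimate recast in the $C^2$ norm, and the constants must be made independent of $\theta$.

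For the first improvement, I would revisit the construction underlying Proposition \ref{LIEPROP3}. Its proof produces $u$ by an inverse-function / second-order open-mapping argument based on the linearization $D_0 E^{I_{2m},T}_\theta$, whose image (together with suitable second variations) covers $T_{I_{2m}}\mbox{Sp}(m)$ by virtue of (\ref{conditionLIEPROP3bis}) and (\ref{conditionLIEPROP4bis}). One can select a fixed finite collection of smooth compactly supported profiles $\phi_\alpha \in C^\infty_c((0,T);\R^k)$, tuned to the iterated brackets $B_i^{\theta,j}(\bar{t})$ and $[B_i^{\theta,1}(\bar{t}),B_l^{\theta,1}(\bar{t})]$ appearing in (\ref{conditionLIEPROP3bis}), and look for $u$ of the form $u=\sum_\alpha c_\alpha \phi_\alpha$ with real coefficients $c_\alpha$. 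Density of $C^\infty_c((0,T);\R^k)$ in $L^2([0,T];\R^k)$, as mentioned in the paper, is precisely what allows such smooth profiles to span the same space of infinitesimal variations as general $L^2$ needles; since the $\phi_\alpha$ lie in a fixed finite-dimensional subspace, all norms on that subspace are equivalent, and the $L^2$-bound of Proposition \ref{LIEPROP3} upgrades automatically to a $C^2$-bound of the same order.

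For the second improvement, I would trace through the proof of Proposition \ref{LIEPROP3} and observe that the constants $\mu_\theta, K_\theta$ depend continuously on a finite list of data derived from $(A^\theta(\cdot), B_1^\theta,\ldots,B_k^\theta)$: the fundamental solution $S^\theta$ of (\ref{eq:St}) and its inverse on $[0,T]$, the iterated brackets $B_i^{\theta,j}(\cdot)$ up to a fixed order, the operator norm of a right inverse of the restricted linearization, and a Lipschitz bound on the second derivative of $E_\theta^{I_{2m},T}$ near $u=0$. Each of these varies continuously with the data in the topology $C^2([0,T];M_{2m}(\R))\times (M_{2m}(\R))^k$ by Gronwall-type estimates together with the explicit formula (\ref{raab}). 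The compactness hypotheses on $\{A^\theta(\cdot)\}$ and $\{B_i^\theta\}$ then force all these quantities to be uniformly bounded, with the right inverses uniformly bounded as well thanks to (\ref{conditionLIEPROP3bis}); a finite subcover argument on $\Theta$ yields uniform $\mu>0$ and $K<\infty$.

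The main obstacle I foresee is passing from an $L^2$ estimate to a $C^2$ estimate while preserving the $|X-\bar{X}^\theta(T)|^{1/2}$ exponent, and doing so uniformly in $\theta$. A naive approach---running Proposition \ref{LIEPROP3} to obtain some $L^2$ control and then mollifying---would either destroy the quadratic estimate or introduce dependence on the mollifier scale that is hard to control uniformly. Building the controls from the outset in a fixed finite-dimensional smooth subspace circumvents this difficulty, but it requires verifying that the second-order surjectivity from (\ref{conditionLIEPROP3bis}) and (\ref{conditionLIEPROP4bis}) already holds within that smooth subspace, which is exactly where the density of $C^\infty_c((0,T);\R^k)$ in $L^2([0,T];\R^k)$ enters in an essential way.
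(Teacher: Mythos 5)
Your proposal follows essentially the same route as the paper's proof: reduce to $\bar{X}=I_{2m}$; exploit density of $C^\infty_0((0,T);\R^k)$ in $L^2$ to transfer the negative-definiteness of the second variation onto a fixed finite-dimensional subspace of smooth controls; then use compactness of $\Theta$ together with a finite-subcover and norm equivalence on that finite-dimensional space to upgrade the $L^2$ estimate of Proposition~\ref{LIEPROP3} to a uniform $C^2$ estimate with the same square-root exponent, via a parametric version of Theorem~\ref{THMopenquant}. The one structural point you leave implicit, which the paper spells out, is the dichotomy between the open set $\Theta_1$ where $E_\theta^{I,T}$ is already a submersion at $\bar{u}\equiv 0$ (handled by first-order controllability) and its compact complement $\Theta_2$ where the second-order openness is genuinely needed, together with the compactness of the set of pairs $(\theta,P)$ with $P$ a unit covector in the cokernel.
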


Our proof is based on a series of results on openness properties of $C^2$ mappings near critical points in Banach spaces which was developed by Agrachev and his co-authors, see \cite{as04}.

\subsection{Some sufficient condition for local openness}
Here we are interested in the study of mappings $F: \mathcal{U} \rightarrow \R^N$ of class $C^2$ in an open set $\mathcal{U}$ in some Banach space $X$. We call critical point of $F$ any $u\in \mathcal{U}$ such that $D_uF: \mathcal{U} \rightarrow \R^N$ is not surjective. We call corank of $u$, the quantity
$$
\mbox{corank} (u) := N - \mbox{dim} \left(  \mbox{Im} \bigl( D_uF \bigr)\right).
$$
If $Q:\mathcal{U} \rightarrow \R$ is a quadratic form, its negative index is defined by 
$$
\mbox{ind}_- (Q) := \max \Bigl\{ \mbox{dim} (L) \ \vert \ Q_{\vert L \setminus \{0\}} <0 \Bigr\}.
$$
The following non-quantitative result whose proof can be found in \cite{as04,lazragthesis,riffordbook} provides a sufficient condition at second order for local openness.

\begin{theorem}\label{THMopen}
Let $F: \mathcal{U} \rightarrow \R^N$ be a mapping of class $C^2$ on an open set $\mathcal{U} \subset X$ and $\bar{u} \in \mathcal{U}$ be a critical point of $F$ of corank $r$. If 
\begin{eqnarray}\label{ind}
\mbox{ind}_- \left( \lambda^* \left( D^2_{\bar{u}} F \right)_{\vert \mbox{Ker} (D_{\bar{u}} F)}  \right) \geq r \qquad \forall \lambda \in \left( \mbox{Im} \bigl( D_{\bar{u}} F \bigr)\right)^{\perp} \setminus \{0\},
\end{eqnarray}
then the mapping $F$ is locally open at $\bar{u}$, that is the image of any neighborhood of $\bar{u}$ is an neighborhood of $F(\bar{u})$.
\end{theorem}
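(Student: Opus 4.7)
The plan is to reduce Theorem \ref{THMopen} to a finite-dimensional quadratic-openness statement and then invoke a topological (Brouwer or Borsuk--Ulam) degree argument. Set $V := \mbox{Im}(D_{\bar{u}} F) \subset \R^N$, so that $V^\perp$ has dimension $r$, choose a finite-dimensional subspace $W \subset X$ such that $D_{\bar{u}} F$ sends $W$ isomorphically onto $V$, and let $K := \mbox{Ker}(D_{\bar{u}} F)$, so that $X = W \oplus K$. Let $\Pi_V$ and $\Pi_{V^\perp}$ be the orthogonal projections onto $V$ and $V^\perp$, respectively.

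The first step is to apply the implicit function theorem to the $V$-valued component $F_V := \Pi_V \circ (F - F(\bar{u}))$. Since $D_{\bar{u}} F_V$ restricted to $W$ is an isomorphism onto $V$, there exists a smooth map $w: K \times V \to W$ defined near $(0,0)$ with $w(0,0) = 0$ and $F_V(\bar{u} + u + w(u,v)) = v$. Openness of $F$ at $\bar{u}$ then becomes equivalent to openness at $0$ of the \emph{reduced map}
$$
G(u) := \Pi_{V^\perp} \bigl( F(\bar{u} + u + w(u,0)) - F(\bar{u}) \bigr),
$$
defined on a neighborhood of $0$ in $K$ and valued in $V^\perp$. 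A direct chain-rule computation, using that $D_0 w \cdot \delta u = 0$ for $\delta u \in K$ and that the second-order contribution of $w$ takes values in $W$, which $\Pi_{V^\perp} \circ D_{\bar{u}} F$ annihilates, yields $G(0) = 0$, $D_0 G = 0$, and
$$
D^2_0 G(u, u) = \Pi_{V^\perp} \bigl( D^2_{\bar{u}} F\bigr)(u, u) \qquad \forall u \in K.
$$

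Set $Q(u) := \tfrac{1}{2}\Pi_{V^\perp}(D^2_{\bar{u}} F)(u,u)$, so that $G(u) = Q(u) + o(\|u\|^2)$. Hypothesis (\ref{ind}) translates into: for every $\lambda \in V^\perp \setminus \{0\}$, the scalar quadratic form $\lambda^* Q : K \to \R$ has negative index at least $r$. The key finite-dimensional lemma then asserts that any quadratic map $Q : K \to V^\perp \cong \R^r$ with this property is \emph{open at $0$ with quadratic rate}, i.e., there exist $c, \epsilon_0 > 0$ with $Q(B(0,\epsilon)) \supset B(0,c\epsilon^2)$ in $V^\perp$ for all $\epsilon < \epsilon_0$. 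The proof of this lemma is topological: for each $\lambda$ on the unit sphere $S^{r-1} \subset V^\perp$, one selects an $r$-dimensional subspace of $K$ on which $\lambda^* Q$ is negative definite; a continuity-of-selection argument combined with a Brouwer or Borsuk--Ulam degree computation (on a suitable sphere in $K$) then shows that $Q$ covers a quadratic-size ball in every direction of $V^\perp$.

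Finally, the $o(\|u\|^2)$ perturbation separating $G$ from $Q$ is absorbed via a standard contraction/Newton argument: the quadratic lower bound on $Q$ dominates any strictly higher-order perturbation, so for each target $y \in V^\perp$ of sufficiently small norm a fixed-point iteration produces $u \in K$ with $G(u) = y$. The main obstacle is the quadratic openness lemma for $Q$, which is the technical heart of the theorem (due to Agrachev--Sachkov) and whose proof combines a careful selection of negative subspaces for an indefinite family of quadratic forms with a degree-theoretic covering argument; the implicit-function reduction and the absorption of higher-order terms are routine once that lemma is available.
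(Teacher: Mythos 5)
The paper does not actually prove Theorem~\ref{THMopen}: it states the result and refers the reader to Agrachev--Sachkov \cite{as04}, \cite{lazragthesis} and \cite{riffordbook} for the proof, and similarly defers the quantitative version (Theorem~\ref{THMopenquant}). So there is no in-text argument to compare yours against line by line. What you have written is a correct outline of the standard Agrachev--Sachkov strategy: split $X = W \oplus \mbox{Ker}(D_{\bar{u}}F)$ with $D_{\bar u}F|_W$ an isomorphism onto $\mbox{Im}(D_{\bar u}F)$, eliminate the $W$-variable by the implicit function theorem applied to the $\mbox{Im}(D_{\bar u}F)$-component, obtain a reduced map $G$ on $\mbox{Ker}(D_{\bar u}F)$ with $G(0)=0$, $D_0G=0$ and quadratic leading term $\Pi_{V^\perp}D^2_{\bar u}F$, translate hypothesis (\ref{ind}) into an index condition on $\lambda^* Q$ for all $\lambda$ in the $r$-dimensional $V^\perp$, invoke the finite-dimensional quadratic openness lemma, and absorb the error. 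Your chain-rule computation showing $D_0 G = 0$ and $D_0^2 G = \Pi_{V^\perp} D^2_{\bar u}F|_K$ is correct. Two small points deserve more care if this were to be written out in full. First, openness of $F$ at $\bar u$ is not literally \emph{equivalent} to openness of $G$ at $0$: you actually need the family $G_v(u) := \Pi_{V^\perp}\bigl(F(\bar u + u + w(u,v)) - F(\bar u)\bigr) - \Pi_{V^\perp}\bigl(F(\bar u + w(0,v))-F(\bar u)\bigr)$ to be open at $0$ \emph{uniformly} in small $v$; this does follow once the argument for $G=G_0$ is made quantitative, but it is worth stating. Second, your description of the finite-dimensional lemma as resting on a ``continuity-of-selection'' of $r$-dimensional negative subspaces is imprecise: such a selection need not exist continuously in $\lambda$ (eigenvalue crossings), and the Agrachev--Sachkov argument circumvents this via a degree/homotopy argument on the sphere in the kernel that does not rely on a continuous negative-subspace selection. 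Likewise, absorbing the $o(\|u\|^2)$ term is not a bare Newton/contraction step (since $Q$ is not invertible and $\dim K$ typically exceeds $r$); it is handled by degree invariance under a homotopy between $G$ and $Q$ on a small sphere, which is exactly what the quantitative Theorem~\ref{THMopenquant} packages. Since you explicitly defer the technical core to Agrachev--Sachkov, as does the paper, these are matters of phrasing rather than gaps in your plan.
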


In the above statement, $\left( D^2_{\bar{u}} F \right)_{\vert \mbox{Ker} (D_{\bar{u}} F)}$ refers to the quadratic mapping from $\mbox{Ker} (D_{\bar{u}} F)$ to $\R^N$ defined by
$$
\left( D^2_{\bar{u}} F \right)_{\vert \mbox{Ker} (D_{\bar{u}} F)} (v) :=  D^2_{\bar{u}} F \cdot (v,v) \qquad \forall v \in \mbox{Ker} (D_{\bar{u}} F).
$$
The following result is a quantitative version of the previous theorem. (We denote by $B_X(\cdot,\cdot)$ the balls in $X$ with respect to the norm $\|\cdot\|_X$.)

\begin{theorem}\label{THMopenquant}
Let $F: \mathcal{U} \rightarrow \R^N$ be a mapping of class $C^2$ on an open set $\mathcal{U} \subset X$ and $\bar{u} \in \mathcal{U}$ be a critical point of $F$ of corank $r$. Assume that (\ref{ind}) holds. Then there exist $\bar{\epsilon}, c  \in (0,1)$ such that for every $\epsilon \in (0,\bar{\epsilon})$ the following property holds: For every $u \in \mathcal{U}, z \in \R^N$ with
\begin{eqnarray}
\left\| u - \bar{u} \right\|_X < \epsilon, \quad \left| z- F(u) \right| < c\, \epsilon^2,
\end{eqnarray}
there are $w_1, w_2 \in X$ such that $u+w_1+w_2\in \mathcal{U}$, 
\begin{eqnarray}
z = F \bigl( u+ w_1 + w_2\bigr),
\end{eqnarray}
and
\begin{eqnarray}
w_1 \in  \mbox{Ker} \left(D_{u} F\right), \quad \bigl\| w_1\bigr\|_X < \epsilon, \quad \bigl\| w_2\bigr\|_X < \epsilon^2.
\end{eqnarray}
\end{theorem}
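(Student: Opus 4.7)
The plan is to combine the non-quantitative openness of Theorem \ref{THMopen} with a rescaling and compactness argument that tracks the dependence on $\epsilon$. The key observation is that, with $w_1$ of size $\epsilon$ chosen in $\mathrm{Ker}(D_uF)$ and $w_2$ of size $\epsilon^2$ in a complement, the leading contributions to $F(u+w_1+w_2)-F(u)$ are the first-order piece $D_uF(w_2)$ (of size $\epsilon^2$, lying in $\mathrm{Im}(D_uF)$) and the second-order piece $\tfrac12 D^2_uF(w_1,w_1)$ (also of size $\epsilon^2$, with a component along the missing corank directions). Provided the second-order piece covers the $r$-dimensional corank quantitatively, these two pieces together span a full $\epsilon^2$-ball in $\R^N$, so targets $z$ with $|z-F(u)|<c\epsilon^2$ become reachable.

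Concretely, I would fix an inner product on $\R^N$, write $\R^N=E_0\oplus E_1$ with $E_0=\mathrm{Im}(D_{\bar{u}}F)$ and $\dim E_1=r$, denote by $\pi_0,\pi_1$ the corresponding projections, and fix a topological complement $W$ of $K_0:=\mathrm{Ker}(D_{\bar{u}}F)$ in $X$ so that $D_{\bar{u}}F|_W\colon W\to E_0$ is a bounded isomorphism. Hypothesis (\ref{ind}) says exactly that for every $\lambda\in E_1^*\setminus\{0\}$ the quadratic form $v\mapsto\lambda^*D^2_{\bar{u}}F(v,v)$ on $K_0$ has negative index $\geq r$. A standard compactness argument on the unit sphere of $E_1^*$ (select, for each $\lambda$, an $r$-dimensional subspace of $K_0$ on which the form is $\leq -\delta\|\cdot\|^2$, extract a finite subcover, and let $V$ be the span of the selected subspaces) produces a finite-dimensional $V\subset K_0$ and constants $\rho_0,C_0>0$ such that the quadratic map
\begin{equation*}
Q_0\colon V\longrightarrow E_1,\qquad Q_0(v):=\pi_1 D^2_{\bar{u}}F(v,v),
\end{equation*}
has the quantitative covering property: for every $\eta\in B_{E_1}(0,\rho_0)$ there exists $v\in V$ with $Q_0(v)=\eta$ and $\|v\|_X\leq C_0|\eta|^{1/2}$. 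This is a finite-dimensional Brouwer-degree statement for a polynomial map whose dual forms all have negative index $\geq r$.

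Next, for $u\in B_X(\bar{u},\bar{\epsilon})$ and small $\epsilon>0$, I introduce the rescaled map
\begin{equation*}
G_{u,\epsilon}(v,w):=\tfrac{1}{\epsilon^2}\bigl(F(u+\epsilon v+\epsilon^2 w)-F(u)\bigr),\qquad (v,w)\in V\times W,
\end{equation*}
whose Taylor expansion gives $G_{u,\epsilon}(v,w)=D_uF(w)+\tfrac12 D^2_uF(v,v)+O(\epsilon)$ in $C^1$ on bounded subsets, uniformly as $(u,\epsilon)\to(\bar{u},0)$. The limit $G_0(v,w):=D_{\bar{u}}F(w)+\tfrac12 D^2_{\bar{u}}F(v,v)$ is surjective onto a ball $B_{\R^N}(0,\rho_1)$ with quantitative control: for $\eta=\pi_0\eta+\pi_1\eta\in B_{\R^N}(0,\rho_1)$, first solve $Q_0(v)=2\pi_1\eta$ (so $\|v\|\leq C_0\sqrt{2|\pi_1\eta|}$), then solve $D_{\bar{u}}F(w)=\pi_0\eta-\tfrac12\pi_0 D^2_{\bar{u}}F(v,v)$ via the right-inverse of $D_{\bar{u}}F|_W$. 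A stability argument based on the $C^1$-closeness of $G_{u,\epsilon}$ to $G_0$ (two-step contraction, or direct application of Theorem \ref{THMopen}, noting that corank and the index condition are preserved under small $C^2$ perturbations) transfers this covering to $G_{u,\epsilon}$ for $\|u-\bar{u}\|,\epsilon<\bar{\epsilon}$ with mildly degraded constants. Setting $\eta:=\epsilon^{-2}(z-F(u))$ and choosing $c<\rho_1/2$, the hypothesis $|z-F(u)|<c\epsilon^2$ gives $|\eta|<\rho_1$, so the covering furnishes $(v,w)\in V\times W$ with $F(u+\epsilon v+\epsilon^2 w)=z$, $\|v\|_X=O(\sqrt{c})$, $\|w\|_X=O(1)$. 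Rescaling to $w_1:=\epsilon v$, $w_2:=\epsilon^2 w$ yields $\|w_1\|_X<\epsilon$ and $\|w_2\|_X<\epsilon^2$ for $c,\bar{\epsilon}$ small. Finally, $w_1$ lies in $K_0=\mathrm{Ker}(D_{\bar{u}}F)$ but not necessarily in $\mathrm{Ker}(D_uF)$; projecting $w_1$ along $W$ onto $\mathrm{Ker}(D_uF)$ (bounded projection for $u$ near $\bar{u}$) modifies it by $O(\|u-\bar{u}\|\cdot\|w_1\|)=O(\epsilon^2)$, which can be absorbed into $w_2$ via one additional Newton step using $D_uF|_W$.

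\textbf{Main obstacle.} The non-routine step is the quantitative covering $Q_0(B_V)\supset B_{E_1}(0,\rho_0)$ with square-root control on preimages, and, inside it, the construction of a single finite-dimensional $V$ that works uniformly for all $\lambda\in S^{r-1}\subset E_1^*$. Extracting a measurable/piecewise-continuous choice of negative $r$-dimensional subspace $L_\lambda\subset K_0$ from the pointwise hypothesis (\ref{ind}), passing to a finite cover of $S^{r-1}$, and then running the Brouwer-degree argument for the quadratic polynomial map $Q_0|_V$ is the core topological input; the rest of the proof is bookkeeping of powers of $\epsilon$ and of the perturbation $u-\bar{u}$, along the lines of the Agrachev-Sachkov openness theory of \cite{as04} and as detailed in \cite{lazragthesis}.
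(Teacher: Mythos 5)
The paper does not prove this statement; it says only that the proof ``follows from previous results by Agrachev--Sachkov \cite{as04} and Agrachev--Lee \cite{al09} and can be found in \cite{lazragthesis,riffordbook}.'' So your proposal must be judged on its own against the standard Agrachev-style openness theory, and while the overall architecture (finite-dimensional reduction to a subspace $V$ of the kernel, splitting $\R^N=E_0\oplus E_1$, rescaling with $w_1\sim\epsilon$, $w_2\sim\epsilon^2$) is the right shape, two of the steps as written do not work.

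First, your ``core topological input'' is mis-stated. You assert that the quantitative covering $Q_0(B_V)\supset B_{E_1}(0,\rho_0)$ is ``a finite-dimensional Brouwer-degree statement for a polynomial map.'' It is not: a quadratic map $Q_0$ is an even map, $Q_0(-v)=Q_0(v)$, so its Brouwer degree on any ball is zero and degree theory alone gives no surjectivity at all. The assertion you need is genuinely Agrachev's theorem on quadratic maps (if $\mathrm{ind}_-(\lambda\cdot Q)\geq r$ for all $\lambda\neq 0$ then $0$ lies in the interior of the image, with square-root control on preimages), whose proof is not a degree count but a separate argument on the structure of quadratic maps. This is precisely the piece of \cite{as04} that the paper is outsourcing, and your sketch does not reproduce it.

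Second, the claimed expansion $G_{u,\epsilon}(v,w)=D_uF(w)+\tfrac12 D^2_uF(v,v)+O(\epsilon)$, uniformly as $(u,\epsilon)\to(\bar u,0)$, is false. Expanding $F$ at $u$ gives an extra term $\tfrac{1}{\epsilon}D_uF(v)$. Since $v\in V\subset\mathrm{Ker}(D_{\bar u}F)$ but not $\mathrm{Ker}(D_uF)$, one has $D_uF(v)=(D_uF-D_{\bar u}F)(v)=O(\|u-\bar u\|\,\|v\|)$, so $\tfrac1\epsilon D_uF(v)=O(\|u-\bar u\|/\epsilon)\|v\|$, which is only $O(1)$ -- not $O(\epsilon)$ -- on the set $\|u-\bar u\|<\epsilon$ that the theorem actually requires. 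Thus $G_{u,\epsilon}$ does not converge in $C^1$ to $G_0$; there is an additional bounded linear-in-$v$ term that competes at leading order with the quadratic $\tfrac12\pi_1D^2_{\bar u}F(v,v)$ in the $E_1$ direction. Your final paragraph gestures at this (re-projecting $w_1$ into $\mathrm{Ker}(D_uF)$), but reprojecting after the fact does not repair the covering argument: the cross term must be controlled before the covering is invoked, either by building $V$ inside $\mathrm{Ker}(D_uF)$ and verifying the index hypothesis is preserved, or by showing directly that the perturbed map $v\mapsto \pi_1D^2_{\bar u}F(v,v)+\pi_1\ell'(v)$ with $\|\ell'\|$ merely bounded still covers a uniform ball via Agrachev's index argument (which is stable under adding a linear form to the quadratic). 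As written the step is a genuine gap, not bookkeeping.
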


Again, the proof of Theorem \ref{THMopenquant} which follows from previous results by Agrachev-Sachkov \cite{as04} and Agrachev-Lee  \cite{al09} can be found in \cite{lazragthesis,riffordbook}. A parametric version  of Theorem \ref{THMopenquant} that will be useful in the proof of Proposition \ref{LIEPROP3bis} is provided in \cite{lazragthesis}.

\subsection{Proof of Proposition \ref{LIEPROP3}}\label{ProofLIEPROP3}

Without loss of generality, we may assume that $\bar{X}=I_{2m}$. As a matter of fact, if $X_u :[0,T] \rightarrow \mbox{Sp}(m) \subset M_{2m}(\R)$ is solution to the Cauchy problem
\begin{eqnarray}\label{cauchy10fev}
 \dot{X}_u (t) = A(t) X_u (t) + \sum_{i=1}^k u_i(t) B_i X_u(t)  \mbox{ for a.e. }   t \in [0,T], \quad X_u(0) = I_{2m},
\end{eqnarray}
then for every $\bar{X} \in  \mbox{Sp}(m)$, the trajectory $\left(X_u \bar{X}\right):[0,T] \rightarrow M_{2m}(\R)$ starts at $\bar{X}$ and satisfies
$$
\frac{d}{dt} \left( {X}_u (t)\bar{X}\right) = A(t) \left( X_u (t)\bar{X}\right) + \sum_{i=1}^k u_i(t) B_i \left( X_u(t)\bar{X}\right) \quad \mbox{for a.e.} \quad  t \in [0,T].
$$
So any trajectory of (\ref{syscontrol}), that is any control, steering $I_{2m}$ to some $X\in  \mbox{Sp}(m)$ gives rise to a trajectory, with the same control, steering $\bar{X}\in \mbox{Sp}(m)$ to $X\bar{X} \in \mbox{Sp}(m)$. Since right-translations in  $\mbox{Sp}(m)$ are diffeomorphisms, we infer that  local controllability at second order around $\bar{u}\equiv 0$ from $\bar{X}=I_{2m}$ implies controllability at second order around $\bar{u}\equiv 0$ for any $\bar{X} \in \mbox{Sp}(m)$. So from now we assume that $\bar{X}=I_{2m}$ (in the sequel we omit the lower index and simply write $I$). We recall that $\bar{X}:[0,T] \rightarrow \mbox{Sp}(m) \subset M_{2m}(\R)$ denotes the solution of (\ref{cauchy10fev}) associated with $u=\bar{u}\equiv0$ while $X_u:[0,T] \rightarrow \mbox{Sp}(m) \subset M_{2m}(\R)$ stands for a solution of (\ref{cauchy10fev}) associated with some control $u \in L^2 \bigl([0,T]; \R^k\bigr)$. Furthermore, we may also assume that the End-Point mapping $E^{I,T}:L^2  \bigl([0,T]; \R^k\bigr) \rightarrow \mbox{Sp}(m)$ is not a submersion at $\bar{u}$ because it would imply controllability at first order around $\bar{u}$ and so at second order, as desired. 

We equip the vector space $M_{2m}(\R)$ with the scalar product defined by
$$
P\cdot Q = \mbox{tr} \left( P^*Q \right) \qquad \forall P,Q \in M_{2m}(\R).
$$
Let us fix $P \in T_{\bar{X}(T)} \mbox{Sp}(m)$ such that $P$ belongs to $\left( \mbox{Im} \bigl( D_{0} E^{I,T} \bigr)\right)^{\perp} \setminus \{0\}$ with respect to our scalar product (note that $\left( \mbox{Im} \bigl( D_{0} E^{I,T} \bigr)\right)^{\perp} \setminus \{0\}$ is nonempty since $D_{0} E^{I,T} : L^2  \bigl([0,T]; \R^k\bigr) \rightarrow T_{\bar{X}(T)} \mbox{Sp}(m)$ is assumed to be not surjective).

\begin{lemma}\label{LEM20sept1}
For every $t\in [0,T]$, we have
$$
\mbox{tr} \Bigl[ P^* S(T)S(t)^{-1} \, B_i^j(t)S(t)  \Bigr] = 0 \qquad \forall j \geq 0, \forall i= 1,...,k.
$$
\end{lemma}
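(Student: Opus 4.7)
The plan is to establish the $j=0$ case as a direct consequence of the fact that $P$ lies in the orthogonal complement of $\mbox{Im}(D_0 E^{I,T})$, and then to propagate the identity to all $j \geq 1$ by induction on $j$, differentiating in $t$. The key observation making the induction work is that the recursion defining $B_i^{j+1}$ in (\ref{brackets}) is precisely the one obtained by differentiating $S(t)^{-1} B_i^j(t) S(t)$ with respect to $t$.

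\textbf{Base case ($j=0$).} Since $\bar{u}\equiv 0$, the solution of (\ref{cauchy10fev}) starting from $I$ is $\bar{X}(t)=S(t)$, and formula (\ref{raab}) gives
$$
D_0 E^{I,T}(v) = \sum_{i=1}^k \int_0^T v_i(t)\, S(T)S(t)^{-1} B_i S(t)\, dt
$$
for every $v\in L^2([0,T];\R^k)$. Because $P \in \bigl(\mbox{Im}(D_0 E^{I,T})\bigr)^\perp$ with respect to $P\cdot Q = \mbox{tr}(P^*Q)$, we obtain
$$
\sum_{i=1}^k \int_0^T v_i(t)\, \mbox{tr}\bigl[ P^* S(T)S(t)^{-1} B_i S(t)\bigr] \, dt = 0
$$
for every $v$. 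Choosing $v$ supported coordinate-wise yields, by the fundamental lemma of the calculus of variations and continuity in $t$, the identity of the lemma for $j=0$.

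\textbf{Inductive step.} Set $\varphi_i^j(t) := \mbox{tr}\bigl[P^* S(T)S(t)^{-1} B_i^j(t) S(t)\bigr]$. Using $\dot{S}(t)=A(t)S(t)$ from (\ref{eq:St}), which gives $\frac{d}{dt} S(t)^{-1} = -S(t)^{-1}A(t)$, the Leibniz rule produces
$$
\dot\varphi_i^j(t) = \mbox{tr}\Bigl[ P^* S(T) S(t)^{-1}\bigl( -A(t) B_i^j(t) + \dot{B}_i^j(t) + B_i^j(t) A(t)\bigr) S(t)\Bigr],
$$
since the outer $A(t) S(t)$ and $-S(t)^{-1}A(t)$ contributions cancel against each other. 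By the very definition (\ref{brackets}) of $B_i^{j+1}$, this is exactly $\varphi_i^{j+1}(t)$. Hence $\varphi_i^j \equiv 0$ on $[0,T]$ implies $\varphi_i^{j+1} \equiv 0$ on $[0,T]$, and the lemma follows by induction on $j$.

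The only mildly delicate point is ensuring that the derivative of $\varphi_i^j$ matches the recursion for $B_i^j$ on the nose; once the cancellation $-S^{-1}A B_i^j + B_i^j A = S^{-1}(-A B_i^j + B_i^j A)\cdot S\cdot S^{-1}$ is noted together with the $\dot B_i^j$ term, this is automatic.
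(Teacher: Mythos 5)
Your proof is correct and follows essentially the same route as the paper: both first use $P\perp\mbox{Im}(D_0E^{I,T})$ together with (\ref{raab}) to get the $j=0$ case pointwise in $t$, and then pass to all $j$ via the identity $\frac{d^j}{dt^j}\bigl(S(t)^{-1}B_iS(t)\bigr)=S(t)^{-1}B_i^j(t)S(t)$; you simply make that identity explicit by a one-step induction rather than asserting it directly.
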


\begin{proof}[Proof of Lemma \ref{LEM20sept1}]
Recall (remember (\ref{raab})) that for every $u\in L^2([0,T];\R^k)$, 
$$
D_0E^{I,T} (u) = S(T) \int_0^T S(t)^{-1} \sum_{i=1}^k u_i(t) B_i \bar{X}(t) \, dt,
$$
where $S(\cdot)$ denotes the solution of the Cauchy problem (\ref{eq:St}). Thus if $P$ belongs to $ \left(\mbox{Im} D_0E^{I,T}\right)^{\perp}$, we have 
$$
\mbox{tr} \left[ P^*  S(T) \int_0^T S(t)^{-1} \sum_{i=1}^k u_i(t) B_i \bar{X}(t) \, dt\right] = 0 \qquad \forall u \in  L^1([0,T];\R^k).
$$
This can be written as
$$
 \sum_{i=1}^k \int_0^Tu_i(t) \,  \mbox{tr} \Bigl[ P^* S(T)  S(t)^{-1}  B_i S(t)\Bigr] \, dt = 0 \qquad \forall u \in  L^1([0,T];\R^k).
 $$
We infer that 
$$
 \mbox{tr} \Bigl[ P^* S(T)  S(t)^{-1}  B_i S(t)\Bigr]  = 0 \qquad \forall i \in \{1, \ldots, k\}, \, \forall t \in [0,T].
$$
We conclude by noticing that 
$$
\frac{d^j}{dt^j} \left( S(t)^{-1}  B_i S(t)\right) = S(t)^{-1}  B_i^j(t) S(t) \qquad \forall t \in [0,T].
$$
\end{proof}

Let $u\in L^2 \bigl([0,T]; \R^k\bigr)$ be fixed, for every $\epsilon \in \R$ small we define $\delta_{\epsilon} :[0,T] \rightarrow M_{2m}(\R)$ by 
$$
\delta_{\epsilon}(t) := E^{I,t}(\epsilon u) \qquad \forall t \in [0,T].
$$
By regularity of the End-Point mapping (see \cite{riffordbook}), we have formally for every $t\in [0,T]$,
$$
\delta_{\epsilon}(t) = \bar{X}(t) + \delta_{\epsilon}^1(t) + \delta_{\epsilon}^2(t) +o(\epsilon^2),
$$
where $ \delta_{\epsilon}^1$ is linear in $\epsilon$ and $\delta_{\epsilon}^2$ quadratic. Then we have for every $t\in [0,T]$,
\begin{eqnarray*}
\delta_{\epsilon}(t) &= &  \bar{X}(t) + \delta_{\epsilon}^1(t) + \delta_{\epsilon}^2(t) +o(\epsilon^2)  \\
 & = & I + \int_0^t A(s) \delta_{\epsilon}(s) + \sum_{i=1}^k \epsilon \, u_i(s) B_i \delta_{\epsilon}(s) \, ds\\
 & = &  \bar{X}(t) +  \int_0^t A(s) \delta_{\epsilon}^1(s) + \sum_{i=1}^k \epsilon \, u_i(s) B_i \bar{X}(s) \, ds \\
 &  & \qquad +  \int_0^t A(s) \delta_{\epsilon}^2(s) + \sum_{i=1}^k \epsilon \, u_i(s) B_i \delta_{\epsilon}^1(s) \, ds + o(\epsilon^2).
\end{eqnarray*}
Consequently, the second derivative of $E^{I,T}$ at $0$ is given by the solution (times $2$) at time $T$ of the Cauchy problem
$$
\left\{
\begin{array}{l}
\dot{Z}(t) = A(t) Z(t) + \sum_{i=1}^k \, u_i(t) B_i Y(t),\\
Z(0)=0,
\end{array}
\right.
$$
where $Y:[0,T]\rightarrow M_{2m}(\R)$ is solution to the linearized Cauchy problem (\ref{linearized}). Therefore we have
$$
D_0^2E^{I,T} (u) = 2 S(T) \int_0^T S(t)^{-1} \sum_{i=1}^k u_i(t) B_i \varphi(t) \, dt,
$$
where 
$$
\varphi(t) :=  \sum_{i=1}^k S(T) \int_0^T S(t)^{-1}  u_i(t) B_i \bar{X}(t) \, dt.
$$
Then we infer that  for every $u\in L^2([0,T];\R^k)$, 
\begin{multline}\label{30oct1}
P \cdot D_0^2E^{I,T} (u) = \\
2 \sum_{i,j=1}^k \int_0^T \int_0^t u_i(t) u_j(s)  \mbox{tr} \Bigl[ P^* S(T)  S(t)^{-1}  B_i S(t) S(s)^{-1} B_j S(s) \Bigr] \, ds \, dt.  
\end{multline}
It is useful to work with an approximation of the quadratic form $P\cdot  D_0^2E^{I,T}$. For every $\delta>0$, we see the space $L^2([0,\delta];\R^k)$ as a subspace of $L^2([0,T];\R^k)$ by the canonical immersion
$$
u \in L^2([0,\delta];\R^k) \, \longmapsto \, \tilde{u} \in L^2([0,T];\R^k), 
$$
with
$$
\tilde{u}(t) := \left\{ \begin{array}{cl} u(t) & \mbox{ if } t \in [0,\delta]\\
0 & \mbox{ otherwise}.
\end{array}
\right.
\qquad \mbox{for a.e. } t \in [0,T]. 
$$
For sake of simplicity, we keep the same notation for $\tilde{u}$ and $u$.

\begin{lemma}\label{LEM20sept2}
There is $C>0$ such that for every $\delta \in (0,T)$, we have 
$$
\Bigl| P \cdot D_0^2E^{I,T} (u) - Q_{\delta}(u) \Bigr| \leq C \delta^4 \, \|u\|_{L^2}^2 \qquad \forall u \in  L^2([0,\delta];\R^k) \subset L^2([0,T];\R^k),
$$
where $Q_{\delta} :  L^2([0,\delta];\R^k) \rightarrow \R$ is defined by
$$
Q_{\delta} (u) := 2 \sum_{i,j=1}^k \int_0^{\delta} \int_0^t u_i(t)u_j(s) \mathcal{P}_{i,j}(t,s)  \, ds \, dt \qquad \forall u \in L^2([0,\delta];\R^k),
$$
with
\begin{multline*}
  \mathcal{P}_{i,j}(t,s) = \mbox{tr} \left[ P^* S(T) \left(sB_iB_j^1(0)+tB_i^1(0)B_j+ \frac{s^2}{2}B_iB_j^2(0) \right. \right.\\
  \left. \left. +\frac{t^2}{2}B_i^2(0)B_j+tsB_i^1(0)B_j^1(0)\right) \right],
\end{multline*}
for any $t,s \in [0,T]$.
\end{lemma}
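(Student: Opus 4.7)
The idea is to Taylor-expand the integrand of (\ref{30oct1}) about $t=s=0$, using the algebraic assumption $B_iB_j=0$ to kill the leading constant term of the product, and then estimate the remainder. For each fixed $i$, the identity $\frac{d^j}{dt^j}\bigl(S(t)^{-1}B_iS(t)\bigr) = S(t)^{-1}B_i^j(t)S(t)$ recorded at the end of the proof of Lemma \ref{LEM20sept1}, together with $S(0)=I$, yields the second-order Taylor expansion
\begin{equation*}
S(t)^{-1}B_iS(t) = B_i + tB_i^1(0) + \tfrac{t^2}{2}B_i^2(0) + R_i(t),\qquad |R_i(t)|\le C_0\, t^3,
\end{equation*}
on $[0,T]$, where $C_0$ depends only on $T$, the map $A(\cdot)$ and the matrices $B_1,\dots,B_k$; an analogous expansion holds for $s\mapsto S(s)^{-1}B_jS(s)$.

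Multiplying the two expansions and invoking $B_iB_j=0$ to discard the constant term, the surviving monomials of total degree at most $2$ in $(t,s)$ are exactly
\begin{equation*}
sB_iB_j^1(0)+tB_i^1(0)B_j+\tfrac{s^2}{2}B_iB_j^2(0)+tsB_i^1(0)B_j^1(0)+\tfrac{t^2}{2}B_i^2(0)B_j,
\end{equation*}
which is precisely the matrix appearing inside the trace in $\mathcal{P}_{i,j}(t,s)$. Everything else is either a monomial in $(t,s)$ of total degree at least $3$ coming from the product of the polynomial parts (namely $t^2 s$, $ts^2$ and $t^2s^2$), or a term involving at least one of the remainders $R_i(t), R_j(s)$. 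In either case, each such term has norm bounded by $C_1(t+s)^3$ on $[0,T]^2$ for some constant $C_1$ depending only on $T$, $A$, the $B_i$'s and $\|P\|$.

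It then remains to integrate. Plugging the above decomposition into (\ref{30oct1}) and comparing with the definition of $Q_\delta$, the pointwise error on the integrand over the triangle $\{0\le s\le t\le \delta\}$ is at most $2C_1\delta^3|u_i(t)||u_j(s)|$, whence
\begin{equation*}
\bigl|P\cdot D_0^2E^{I,T}(u)-Q_\delta(u)\bigr| \le 2C_1\delta^3 \sum_{i,j=1}^{k}\|u_i\|_{L^1[0,\delta]}\|u_j\|_{L^1[0,\delta]}.
\end{equation*}
An application of Cauchy--Schwarz on $[0,\delta]$ gives $\|u_i\|_{L^1[0,\delta]}\le \delta^{1/2}\|u_i\|_{L^2}$, and the announced estimate with constant $C$ depending only on $T$, $A$, the $B_i$ and $\|P\|$ follows immediately. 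The only subtle point in this sketch is the role of the hypothesis $B_iB_j=0$: without it, the constant term $B_iB_j$ of the product would survive, and one would only obtain an $O(\delta^2)\|u\|_{L^2}^2$ estimate, which would be insufficient for the second-order controllability argument of Proposition \ref{LIEPROP3}. Everything else is routine Taylor bookkeeping combined with uniform bounds on the smooth mappings $t\mapsto S(t)^{-1}B_i^j(t)S(t)$ on the compact interval $[0,T]$.
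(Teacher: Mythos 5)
Your proof is correct and follows essentially the same strategy as the paper: Taylor-expand $S(t)^{-1}B_iS(t)$ to second order using $\tfrac{d^j}{dt^j}\bigl(S(t)^{-1}B_iS(t)\bigr)=S(t)^{-1}B_i^j(t)S(t)$, multiply the two expansions, use $B_iB_j=0$ to kill the constant term so that the surviving degree-$\le 2$ part is exactly $\mathcal{P}_{i,j}$, and then bound the remaining integral. The only difference is cosmetic: in the final estimate the paper exploits $s\le t$ on the triangle together with a nested Cauchy--Schwarz to bound each monomial $t^ps^q$ with $p+q\ge 3$, whereas you use the cruder uniform bound $(t+s)^3\le 8\delta^3$ followed by an $L^1$--$L^2$ H\"older step; both deliver the same $\delta^4\|u\|_{L^2}^2$ rate. (A tiny aside: in your closing remark, dropping the hypothesis $B_iB_j=0$ would degrade the estimate only to $O(\delta\|u\|_{L^2}^2)$, not $O(\delta^2\|u\|_{L^2}^2)$, but this does not affect the argument.)
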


\begin{proof}[Proof of Lemma \ref{LEM20sept2}]
Setting for every $i,j = 1, \ldots, k$, 
$$
\mathcal{B}_i(t) := B_i + t \, B_i^1(0) + \frac{t^2}{2} \, B_i^2(0) \qquad \forall t\in [0,T]
$$
 and using (\ref{ASSBB}), we check that for any $t,s \in [0,T]$,
$$
 \mathcal{B}_i(t) \mathcal{B}_j(s) =   \mathcal{P}_{i,j}(t,s) + \Delta_{i,j}(t,s),
 $$
 with
$$
\Delta_{i,j}(t,s) := \frac{t^2s}{2} \, B_i^2(0)B_j^1(0) + \frac{ts^2}{2} \, B_i^1(0)B_j^2(0) +  \frac{t^2s^2}{4} \, B_i^2(0)B_j^2(0).
$$
Moreover, remembering that 
$$
\frac{d^j}{dt^j} \left( S(t)^{-1}  B_i S(t)\right) = S(t)^{-1}  B_i^j(t) S(t) \qquad \forall t \in [0,T],
$$
we have
$$
S(t)^{-1}  B_i S(t) =\mathcal{B}_i(t) + O\bigl(t^3\bigr).
$$
Then by (\ref{30oct1}) we infer that for any $\delta \in (0,T)$ and any $u\in  L^2([0,\delta];\R^k)$, 
\begin{eqnarray*}
&\quad  & P \cdot D_0^2E^{I,T} (u) - Q_{\delta}(u)\\
& = & 2 \sum_{i,j=1}^k \int_0^{\delta} \int_0^t u_i(t) u_j(s)  \mbox{tr} \Bigl[ P^* S(T) \left(\mathcal{B}_i(t)+O\bigl(t^3\bigr)\right)\left(\mathcal{B}_j(s)+O\bigl(s^3\bigr)\right) \\
& \quad & \qquad \qquad \qquad \qquad \qquad \qquad \qquad \qquad \qquad \qquad -  \mathcal{P}_{i,j}(t,s)\Bigr]\,ds\,dt \\
& = & 2 \sum_{i,j=1}^k \int_0^{\delta} \int_0^t u_i(t) u_j(s) \mbox{tr} \Bigl[ P^* S(T) \Bigl( O\bigl(t^3\bigr) B_j(s) + B_i(t) O\bigl(s^3\bigr) +O\bigl(t^3\bigr)O\bigl(s^3\bigr) \\
& \quad & \qquad \qquad \qquad \qquad \qquad \qquad \qquad \qquad \qquad \qquad  +\Delta_{i,j}(t,s) \Bigr) \Bigr]\, ds \, dt.
\end{eqnarray*}
But for every nonnegative integers $p,q$ with $p+q\geq3$, we have
\begin{eqnarray*}
& \quad & \left|\sum_{i,j=1}^k \int_0^{\delta} \int_0^t u_i(t) u_j(s) t^p s^q \,ds\,dt \right| \\
& = & \left| \int_0^{\delta} \left(\sum_{i=1}^k u_i(t)t^p\right) \, \left(\int_0^t \sum_{j=1}^k u_j(s)) s^q\,ds\right) \,dt\right| \\
& \leq & \int_0^{\delta} \left(\sum_{i=1}^k\left|u_i(t)\right| t^p \right) \,  \left(\int_0^t \sum_{j=1}^k\left|u_j(s)\right|s^q \,ds\right)\,dt \\
& \leq & \int_0^{\delta} \left(\sum_{i=1}^k\left|u_i(t)\right| t^{p+q} \right) \,  \left(\int_0^t \sum_{j=1}^k\left|u_j(s)\right| \,ds\right)\,dt,
\end{eqnarray*}
which by Cauchy-Schwarz inequality yields
\begin{eqnarray*}
& \quad & \left|\sum_{i,j=1}^k \int_0^{\delta} \int_0^t u_i(t) u_j(s) t^p s^q \,ds\,dt \right| \\
& \leq &  \sqrt{\int_0^{\delta} \left(\sum_{i=1}^k\left|u_i(t)\right| t^{p+q} \right)^2\,dt} \, \sqrt{\int_0^{\delta} \left(\int_0^t \sum_{j=1}^k\left|u_j(s)\right| \,ds\right)^2\,dt} \\
& \leq & \sqrt{ k \int_0^{\delta} t^{2(p+q)} \, \sum_{i=1}^k\left|u_i(t)\right|^2\,dt} \,   \sqrt{\int_0^{\delta} t \, \int_0^t \left(\sum_{j=1}^k\left|u_j(s)\right| \right)^2 \, ds \,dt} \\
& \leq & \sqrt{ k    \delta^{2(p+q)}    \int_0^{\delta}  \sum_{i=1}^k\left|u_i(t)\right|^2\,dt} \,   \sqrt{\int_0^{\delta} t \, \int_0^{\delta} \left(\sum_{j=1}^k\left|u_j(s)\right| \right)^2 \, ds \,dt} \\
& \leq & \sqrt{k} \, \delta^3 \|u\|_{L^2} \,   \sqrt{ k  \|u\|_{L^2}^2 \int_0^{\delta} t  \,dt} = \frac{k}{\sqrt{2}} \, \delta^4 \|u\|_{L^2}^2. 
\end{eqnarray*}
We conclude easily.
\end{proof}

Returning to the proof of Proposition \ref{LIEPROP3}, we now want to show that the assumption (\ref{ind}) of Theorems \ref{THMopen}-\ref{THMopenquant} is satisfied. We are indeed going to show that a stronger property holds, namely that 
the index of the quadratic form in (\ref{ind}) goes to infinity as $\delta$ tends to zero. 

\begin{lemma}\label{LEM30oct}
For every integer $N>0$, there are $\delta>0$ and a subspace $L_{\delta} \subset L^2\bigl([0,\delta];\R^k\bigr)$ of dimension larger than $N$ such that the restriction of $Q_{\delta}$ to $L_{\delta}$ satisfies
$$
Q_{\delta} (u) \leq -2C \|u\|_{L^2}^2 \delta^4 \qquad \forall u \in L_{\delta}.
$$
\end{lemma}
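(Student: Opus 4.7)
The plan is to use a parabolic rescaling. Setting $w(t) := \sqrt{\delta}\,u(\delta t)$ for $t\in[0,1]$ defines an $L^2$-isometry $L^2([0,1];\R^k)\to L^2([0,\delta];\R^k)$. Plugging this into the definition of $Q_\delta$ and expanding $\mathcal{P}_{i,j}(\delta t,\delta s)$ in powers of $\delta$ yields
$$
Q_\delta(u) \;=\; 2\delta^2\,\tilde{Q}_1(w) + 2\delta^3\,\tilde{Q}_2(w),
$$
where $\tilde{Q}_1$ and $\tilde{Q}_2$ are $\delta$-independent quadratic forms on $L^2([0,1];\R^k)$ whose kernels are polynomial in $(t,s)$ of degrees one and two, respectively. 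Since $\|w\|_{L^2([0,1])}=\|u\|_{L^2([0,\delta])}$, the inequality $Q_\delta(u)\leq -2C\delta^4\|u\|_{L^2}^2$ will follow, for $\delta$ small enough, from the existence of an $(N{+}1)$-dimensional subspace $W\subset L^2([0,1];\R^k)$ on which $\tilde{Q}_1$ is strictly negative with a uniform spectral gap; the $\delta^3\tilde{Q}_2$ remainder is then absorbed.

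Next I would compute $\tilde{Q}_1$ explicitly. Using $B_iB_j=0$ and $B_j^1(0)=B_jA(0)-A(0)B_j$ one finds $B_iB_j^1(0)=-B_iA(0)B_j$ and $B_i^1(0)B_j=+B_iA(0)B_j$, so
$$
\tilde{Q}_1(w) \;=\; \sum_{i,j} c_{ij}\int_0^1\!\!\int_0^t (t-s)\,w_i(t)w_j(s)\,ds\,dt, \qquad c_{ij} := \mathrm{tr}\bigl[P^*S(T)\,B_iA(0)B_j\bigr].
$$
Moreover $2B_iA(0)B_i = [B_i^1(0),B_i]$, which by $(\ref{conditionLIEPROP4})$ lies in $\mathrm{Span}\{B_r^s(0)\}$; Lemma \ref{LEM20sept1} at $t=0$ then yields $c_{ii}=0$. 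Splitting $c=c^s+c^a$ into symmetric and antisymmetric parts and using the identity $(t-s)\mathbf{1}_{s\leq t} = \tfrac{1}{2}[(t-s)+|t-s|]$ gives
$$
\tilde{Q}_1(w) \;=\; \sum c^a_{ij}\,\nu_i\mu_j \;+\; \tfrac{1}{2}\sum c^s_{ij}\!\int\!\!\int w_i(t)w_j(s)\,|t-s|\,dt\,ds,
$$
with $\mu_i := \int_0^1 w_i$, $\nu_i := \int_0^1 t\,w_i$. The first term is finite-rank; the second involves the kernel $|t-s|$, which is classically negative definite on the codimension-one subspace of zero-mean functions: integrating by parts, $g(t) := \int_0^1 |t-s|\phi(s)\,ds$ satisfies $g''=2\phi$, and $\int\phi=0$ forces $g'(0)=g'(1)=0$, so $\int\!\!\int \phi(t)\phi(s)|t-s|\,dt\,ds = -\tfrac{1}{2}\int_0^1 g'(t)^2\,dt < 0$ whenever $\phi\not\equiv 0$.

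Now the dichotomy. If $c^s\not\equiv 0$, then, being symmetric with zero diagonal (hence trace $0$), it admits a unit eigenvector $v\in\R^k$ with eigenvalue $\lambda>0$. Test controls of the form $w_i(t):=v_i\,\phi(t)$ with $\phi$ any zero-mean function kill the $\mu$-dependent part and leave $\tilde{Q}_1(w) = \tfrac{\lambda}{2}\int\!\!\int\phi(t)\phi(s)|t-s|\,dt\,ds<0$. Choosing $N+1$ orthonormal zero-mean functions $\phi_1,\ldots,\phi_{N{+}1}$ (e.g.\ truncated Fourier modes) then gives an $(N{+}1)$-dimensional subspace on which $\tilde{Q}_1$ is negative definite with spectral gap at least $\tfrac{\lambda}{2}\mu_{N+1}$, where $\mu_{N+1}$ is the $(N{+}1)$-st positive eigenvalue of the compact operator with kernel $-|t-s|$ on zero-mean $L^2([0,1])$. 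Taking $\delta$ small enough (as a function of $N$) absorbs the $\delta^3\tilde{Q}_2$ correction and delivers the bound $Q_\delta(u)\leq -2C\delta^4\|u\|_{L^2}^2$. In the degenerate case $c^s\equiv 0$, assumption $(\ref{conditionLIEPROP3})$ combined with Lemma \ref{LEM20sept1} at $t=0$ forces some commutator $[B_{i_0}^1(0),B_{l_0}^1(0)]$ to pair nontrivially with $P^*S(T)$; this commutator enters $\tilde{Q}_2$ as the antisymmetric-in-$(i,j)$ part of the coefficient $\mathrm{tr}[P^*S(T)B_i^1(0)B_j^1(0)]$ of $ts$ in $\mathcal{P}_{i,j}$, and the same negative-kernel strategy applied to antiphase zero-mean test controls supported on the directions $e_{i_0},e_{l_0}$ produces the required large negative index at order $\delta^3$. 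Handling this degenerate case cleanly is the main technical obstacle: one must verify that the symmetrized $\tilde{Q}_2$-kernel still has a component that remains negative definite on an infinite-dimensional subspace of $L^2([0,1];\R^k)$.
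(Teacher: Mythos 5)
Your parabolic rescaling $w(t)=\sqrt{\delta}\,u(\delta t)$ and the identity $Q_\delta(u)=2\delta^2\tilde Q_1(w)+2\delta^3\tilde Q_2(w)$ are both correct, as is your computation $c_{ij}=\mathrm{tr}[P^*S(T)B_iA(0)B_j]$ from $B_iB_j=0$, the observation $c_{ii}=0$ via $(\ref{conditionLIEPROP4})$ and Lemma \ref{LEM20sept1}, and the standard negativity of the $|t-s|$ kernel on zero-mean functions. In the non-degenerate case $c^s\neq0$ your argument is clean and even gives the stronger rate $-c\,\delta^2\|u\|^2$. However, the dichotomy is where the proof breaks down: nothing in the hypotheses $(\ref{ASSBB})$, $(\ref{conditionLIEPROP4})$, $(\ref{conditionLIEPROP3})$ excludes $c^s\equiv0$, and your treatment of that case is only a sketch that you yourself flag as the main obstacle. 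That sketch is not sufficient. When $c^s=0$ one must (i) kill $\tilde Q_1$ entirely (fine, $\mu_i=0$ does it since $\tilde Q_1$ then reduces to $\sum c^a_{ij}\nu_i\mu_j$), but also (ii) control \emph{all} of the non-commutator pieces of $\tilde Q_2$, namely the $\tfrac{s^2}{2}$-, $\tfrac{t^2}{2}$-, and symmetric-$ts$-kernel contributions with coefficients $\mathrm{tr}[P^*S(T)B_iB_j^2(0)]$, $\mathrm{tr}[P^*S(T)B_i^2(0)B_j]$ and the symmetric part of $\mathrm{tr}[P^*S(T)B_i^1(0)B_j^1(0)]$; and (iii) produce, for every $N$, an $(N{+}1)$-dimensional subspace of zero-mean ``antiphase'' controls on which the commutator pairing $(tw_{\bar i})\odot(sw_{\bar j})-(tw_{\bar j})\odot(sw_{\bar i})$ is bounded away from zero uniformly in $\|w\|_{L^2}$. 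Step (iii) is a nontrivial finite-codimension positivity statement that does not follow from the compact-operator spectral argument you use for $|t-s|$, because the bilinear form $(tw_{\bar i})\odot(sw_{\bar j})$ is not symmetric and vanishes on controls with $w_{\bar i}$ proportional to $w_{\bar j}$.

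The paper sidesteps your dichotomy altogether: it always uses the commutator term, imposing the constraint set $\mathcal{L}_{\bar i,\bar j}$ (vanishing moments $\int v=\int sv=0$ and vanishing of the $\odot$-pairings $v_{\bar i}\odot(sv_{\bar j})$, $v_{\bar i}\odot(s^2 v_{\bar j})$, etc.) which, together with $(\ref{conditionLIEPROP4})$ and Lemma \ref{LEM20sept1}, makes every term of $Q_\delta$ except the commutator one vanish \emph{identically}. The entire difficulty is then concentrated in Lemma \ref{LEMtechnical1nov}: finding, inside that constrained set, $N$-dimensional subspaces on which $(tv_{\bar i})\odot(sv_{\bar j})$ is bounded below by $\|v\|^2_{L^2}/K(N)$. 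That lemma is proved in the Appendix by an inductive construction over polynomial controls, with a rank computation verified by Maple to start the induction. Your degenerate case is precisely this lemma, and filling it in would amount to reproving it. So the gap is real: your route works when $c^s\neq0$, but the case $c^s=0$ is both possible and the essential one, and you do not close it.
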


\begin{proof}[Proof of Lemma \ref{LEM30oct}]
Using the notation 
$$
h_1 \odot h_2 = h_1(t) \odot h_2(s) :=   \int_0^{\delta} \int_0^t h_1(t) h_2(s)  \, ds \, dt,
$$
for any pair of continuous functions $h_1, h_2:[0,\delta] \rightarrow \R$, we check that for every $u\in L^2\bigl([0,\delta];\R^k\bigr)$,
\begin{eqnarray}\label{10dec1}
\frac{1}{2} \, Q_{\delta} (u) & = &   \sum_{i,j=1}^k \left( u_i \odot (su_j) \right) \, \mbox{tr} \Bigl[ P^* S(T) B_i B_j^1(0) \Bigr] \\
& \quad & \quad +   \sum_{i,j=1}^k  \left( (tu_i) \odot u_j \right) \, \mbox{tr} \Bigl[ P^* S(T) B_i^1(0) B_j \Bigr] \nonumber\\
& \quad & \quad + \sum_{i,j=1}^k   \left( u_i \odot \left(\frac{s^2u_j}{2}\right) \right) \,  \mbox{tr} \Bigl[ P^* S(T) B_i B_j^2(0) \Bigr]\nonumber \\ 
& \quad & \quad +  \sum_{i,j=1}^k   \left( \left(\frac{t^2u_i}{2}\right) \odot u_j \right) \,  \mbox{tr} \Bigl[ P^* S(T) B_i^2(0) B_j \Bigr] \nonumber\\ 
& \quad & \quad +  \sum_{i,j=1}^k \left( (tu_i) \odot (su_j) \right) \,   \mbox{tr} \Bigl[ P^* S(T) B_i^1(0) B_j^1(0) \Bigr].\nonumber
\end{eqnarray}
Fix $\bar{i}, \bar{j} \in \{1, \ldots, k\}$ with $\bar{i}\neq \bar{j}$ and take $v = \bigl( v_1, \ldots, v_k\bigr)\in L^2([0,\delta];\R^k)$ such that 
\begin{eqnarray*}
 v_i(t) =0 \quad \forall t \in [0,\delta], \, \forall  i  \in \{1, \ldots, k\} \setminus \{\bar{i},\bar{j}\}.
\end{eqnarray*}
The sum of the first two terms in the right-hand side of (\ref{10dec1}) is given by 
\begin{eqnarray*}
\sum_{i,j=1}^k \left\{ \left( v_{i} \odot (sv_{j}) \right) \, \mbox{tr} \Bigl[ P^* S(T) B_{i} B_{j}^1(0) \Bigr]+ \left( (tv_{i}) \odot v_{j} \right) \mbox{tr} \Bigl[ P^* S(T) B_{i}^1(0)B_{j} \Bigr] \right\} \\
= \left( v_{\bar{i}} \odot (sv_{\bar{j}}) \right) \, \mbox{tr} \Bigl[ P^* S(T) B_{\bar{i}} B_{\bar{j}}^1(0) \Bigr]
+\left( v_{\bar{j}} \odot (sv_{\bar{i}}) \right) \, \mbox{tr} \Bigl[ P^* S(T) B_{\bar{j}} B_{\bar{i}}^1(0) \Bigr]\\
+\left( v_{\bar{i}} \odot (sv_{\bar{i}}) \right) \, \mbox{tr} \Bigl[ P^* S(T) B_{\bar{i}} B_{\bar{i}}^1(0) \Bigr]
+\left( v_{\bar{j}} \odot (sv_{\bar{j}}) \right) \, \mbox{tr} \Bigl[ P^* S(T) B_{\bar{j}} B_{\bar{j}}^1(0) \Bigr]\\ 
+\left( (tv_{\bar{i}}) \odot v_{\bar{j}} \right) \, \mbox{tr} \Bigl[ P^* S(T) B_{\bar{i}}^1(0) B_{\bar{j}} \Bigr]
+\left( (tv_{\bar{j}}) \odot v_{\bar{i}} \right) \, \mbox{tr} \Bigl[ P^* S(T) B_{\bar{j}}^1(0) B_{\bar{i}} \Bigr]\\
+\left( (tv_{\bar{i}}) \odot v_{\bar{i}} \right) \, \mbox{tr} \Bigl[ P^* S(T) B_{\bar{i}}^1(0) B_{\bar{i}} \Bigr]
+\left( (tv_{\bar{j}}) \odot v_{\bar{j}} \right) \, \mbox{tr} \Bigl[ P^* S(T) B_{\bar{j}}^1(0) B_{\bar{j}} \Bigr].
\end{eqnarray*}
By integration by parts, we have 
$$
 v_{\bar{i}} \odot (sv_{\bar{i}})=\left(\int_0^{\delta} v_{\bar{i}}(s)\,ds \right) \left(\int_0^{\delta} sv_{\bar{i}}(s)\, ds\right)-(tv_{\bar{i}}) \odot v_{\bar{i}}).
$$
So
\begin{multline*}
\left( v_{\bar{i}} \odot (sv_{\bar{i}}) \right) \, \mbox{tr} \Bigl[ P^* S(T) B_{\bar{i}} B_{\bar{i}}^1(0) \Bigr]+\left( (tv_{\bar{i}}) \odot v_{\bar{i}} \right) \, \mbox{tr} \Bigl[ P^* S(T) B_{\bar{i}}^1(0) B_{\bar{i}} \Bigr]\\
=\left(\int_0^{\delta} v_{\bar{i}}(s)\, ds\right)\left(\int_0^{\delta} sv_{\bar{i}}(s)\, ds\right)\mbox{tr} \Bigl[ P^* S(T) B_{\bar{i}} B_{\bar{i}}^1(0) \Bigr] \\
+(tv_{\bar{i}}) \odot v_{\bar{i}})\mbox{tr} \Bigl[ P^* S(T) \left[B_{\bar{i}}^1(0), B_{\bar{i}}\right] \Bigr].
\end{multline*}
But according to (\ref{conditionLIEPROP4}) with $i=\bar{i}$ (remember that $\bar{t}=0$), we have
$$
\left[B_{\bar{i}}^1(0), B_{\bar{i}}\right]  \in \mbox{Span}  \Bigl\{B_r^s(0) \, \vert \, r =1,..,k, \, \, s \geq 0  \Bigr\},
$$
then by Lemma \ref{LEM20sept1} we obtain 
$$
\mbox{tr} \Bigl[ P^* S(T) \left[B_{\bar{i}}^1(0), B_{\bar{i}}\right] \Bigr]=0,
$$
and consequently,
\begin{multline*}
\left( v_{\bar{i}} \odot (sv_{\bar{i}}) \right) \, \mbox{tr} \Bigl[ P^* S(T) B_{\bar{i}} B_{\bar{i}}^1(0) \Bigr]+\left( (tv_{\bar{i}}) \odot v_{\bar{i}} \right) \, \mbox{tr} \Bigl[ P^* S(T) B_{\bar{i}}^1(0) B_{\bar{i}} \Bigr]\\
=\left(\int_0^{\delta} v_{\bar{i}}(s)\,ds\right)\left(\int_0^{\delta} sv_{\bar{i}}(s)\,ds\right)\mbox{tr} \Bigl[ P^* S(T) B_{\bar{i}} B_{\bar{i}}^1(0) \Bigr].
\end{multline*}
Similarly, we obtain 
\begin{eqnarray*}
\left( v_{\bar{j}} \odot (sv_{\bar{j}}) \right) \, \mbox{tr} \Bigl[ P^* S(T) B_{\bar{j}} B_{\bar{j}}^1(0) \Bigr]+\left( (tv_{\bar{j}}) \odot v_{\bar{j}} \right) \, \mbox{tr} \Bigl[ P^* S(T) B_{\bar{j}}^1(0) B_{\bar{j}} \Bigr]\\
=\left(\int_0^{\delta} v_{\bar{j}}(s)\,ds\right)\left(\int_0^{\delta} sv_{\bar{j}}(s)\, ds\right)\mbox{tr} \Bigl[ P^* S(T) B_{\bar{j}} B_{\bar{j}}^1(0) \Bigr].
\end{eqnarray*}
In conclusion, the sum of the first two terms in the right-hand side of (\ref{10dec1}) can be written as 
\begin{multline*}
\sum_{i,j=1}^k \left\{ \left( v_{i} \odot (sv_{j}) \right) \, \mbox{tr} \Bigl[ P^* S(T) B_{i} B_{j}^1(0) \Bigr]+ \left( (tv_{i}) \odot v_{j} \right) \mbox{tr} \Bigl[ P^* S(T) B_{i}^1(0)B_{j} \Bigr] \right\} \\
= \left( v_{\bar{i}} \odot (sv_{\bar{j}}) \right) \, \mbox{tr} \Bigl[ P^* S(T) B_{\bar{i}} B_{\bar{j}}^1(0) \Bigr]
+\left( v_{\bar{j}} \odot (sv_{\bar{i}}) \right) \, \mbox{tr} \Bigl[ P^* S(T) B_{\bar{j}} B_{\bar{i}}^1(0) \Bigr]\\
+\left(\int_0^{\delta} v_{\bar{i}}(s)\, ds\right)\left(\int_0^{\delta} sv_{\bar{i}}(s)\, ds\right)\mbox{tr} \Bigl[ P^* S(T) B_{\bar{i}} B_{\bar{i}}^1(0) \Bigr]\\
+\left( (tv_{\bar{i}}) \odot v_{\bar{j}} \right) \, \mbox{tr} \Bigl[ P^* S(T) B_{\bar{i}}^1(0) B_{\bar{j}} \Bigr]
+\left( (tv_{\bar{j}}) \odot v_{\bar{i}} \right) \, \mbox{tr} \Bigl[ P^* S(T) B_{\bar{j}}^1(0) B_{\bar{i}} \Bigr]\\
+\left(\int_0^{\delta} v_{\bar{j}}(s)\, ds\right)\left(\int_0^{\delta} sv_{\bar{j}}(s)\, ds\right)\mbox{tr} \Bigl[ P^* S(T) B_{\bar{j}} B_{\bar{j}}^1(0) \Bigr].
\end{multline*}
By the same arguments as above, the sum of the third and fourth terms in  the right-hand side of (\ref{10dec1}) can be written as
\begin{multline*}
\sum_{i,j=1}^k \left\{ \left( v_{i} \odot \left(\frac{s^2v_{j}}{2}\right) \right) \, \mbox{tr} \Bigl[ P^* S(T) B_{i} B_{j}^2(0) \Bigr] \right. \\
\left. + \left( \left(\frac{t^2v_{i}}{2}\right) \odot v_{j} \right) \mbox{tr} \Bigl[ P^* S(T) B_{i}^2(0)B_{j} \Bigr] \right\} \\
= \left( v_{\bar{i}} \odot \left(\frac{s^2v_{\bar{j}}}{2}\right) \right) \, \mbox{tr} \Bigl[ P^* S(T) B_{\bar{i}} B_{\bar{j}}^2(0) \Bigr]+\left( v_{\bar{j}} \odot \left(\frac{s^2v_{\bar{i}}}{2} \right) \right) \, \mbox{tr} \Bigl[ P^* S(T) B_{\bar{j}} B_{\bar{i}}^2(0) \Bigr]\\
+\left(\int_0^{\delta} v_{\bar{i}}(s)\, ds\right)\left(\int_0^{\delta} \frac{s^2v_{\bar{i}}(s) }{2}\, ds\right)\mbox{tr} \Bigl[ P^* S(T) B_{\bar{i}} B_{\bar{i}}^2(0) \Bigr]\\
+\left( \left(\frac{t^2v_{\bar{i}}}{2}\right) \odot v_{\bar{j}} \right) \, \mbox{tr} \Bigl[ P^* S(T) B_{\bar{i}}^2(0) B_{\bar{j}} \Bigr]+\left( \left(\frac{t^2v_{\bar{j}}}{2}\right) \odot v_{\bar{i}} \right) \, \mbox{tr} \Bigl[ P^* S(T) B_{\bar{j}}^2(0) B_{\bar{i}} \Bigr]\\
+\left(\int_0^{\delta} v_{\bar{j}}(s)\, ds\right)\left(\int_0^{\delta} \frac{s^2v_{\bar{j}}(s)}{2} \,ds\right)\mbox{tr} \Bigl[ P^* S(T) B_{\bar{j}} B_{\bar{j}}^2(0) \Bigr],
\end{multline*}
the fifth (and last) part of $\frac{1}{2} \, Q_{\delta} (v)$ is given by
\begin{multline*}
\sum_{i,j=1}^k \left\{ \left( (tv_{i}) \odot (sv_{j}) \right) \, \mbox{tr} \Bigl[ P^* S(T) B_{i}^1(0)B_{j}^1(0) \Bigr] \right\} = \\
\left( (tv_{\bar{i}}) \odot (sv_{\bar{j}}) \right) \, \mbox{tr} \Bigl[ P^* S(T) B_{\bar{i}}^1(0) B_{\bar{j}}^1(0) \Bigr]+ \left( (tv_{\bar{j}}) \odot (sv_{\bar{i}}) \right) \, \mbox{tr} \Bigl[ P^* S(T) B_{\bar{j}}^1(0) B_{\bar{i}}^1(0) \Bigr]\\
 +\left( (tv_{\bar{i}}) \odot (sv_{\bar{i}}) \right) \, \mbox{tr} \Bigl[ P^* S(T) (B_{\bar{i}}^1(0))^2 \Bigr]\\
+\left( (tv_{\bar{j}}) \odot v_{\bar{j}}) \right) \, \mbox{tr} \Bigl[ P^* S(T) (B_{\bar{j}}^1(0))^2 \Bigr].
\end{multline*}
By integration by parts, we have 
$$
(tv_{\bar{i}}) \odot (sv_{\bar{i}})=\frac{1}{2}\left(\int_0^{\delta} sv_{\bar{i}}(s) \, ds \right)^2, \quad (tv_{\bar{j}}) \odot (sv_{\bar{j}})=\frac{1}{2}\left(\int_0^{\delta} sv_{\bar{j}}(s) \, ds \right)^2,
$$
$$
\mbox{and} \quad (tv_{\bar{j}}) \odot (sv_{\bar{i}}) = \left(\int_0^{\delta} sv_{\bar{i}}(s) \, ds \right) \left(\int_0^{\delta} sv_{\bar{j}}(s) \, ds \right) -  (tv_{\bar{i}}) \odot (sv_{\bar{j}}).
$$
Therefore the last part of $\frac{1}{2} \, Q_{\delta} (v)$ can be written as 
\begin{multline*}
\sum_{i,j=1}^k \left\{ \left( (tv_{i}) \odot (sv_{j}) \right) \, \mbox{tr} \Bigl[ P^* S(T) B_{i}^1(0)B_{j}^1(0) \Bigr] \right\} = \\
 \left( (tv_{\bar{i}}) \odot (sv_{\bar{j}}) \right) \, \mbox{tr} \Bigl[ P^* S(T) \left[ B_{\bar{i}}^1(0), B_{\bar{j}}^1(0)\right] \Bigr] \\
 +    \left(\int_0^{\delta} sv_{\bar{i}}(s) \, ds \right) \left(\int_0^{\delta} sv_{\bar{j}}(s) \, ds \right)       \, \mbox{tr} \Bigl[ P^* S(T) B_{\bar{j}}^1(0) B_{\bar{i}}^1(0) \Bigr]\\
+\frac{1}{2}\left(\int_0^{\delta} sv_{\bar{i}}(s) \, ds \right)^2 \, \mbox{tr} \Bigl[ P^* S(T) (B_{\bar{i}}^1(0))^2 \Bigr]\\
 + \frac{1}{2}\left(\int_0^{\delta} sv_{\bar{j}}(s) \, ds \right)^2 \, \mbox{tr} \Bigl[ P^* S(T) (B_{\bar{j}}^1(0))^2 \Bigr].
\end{multline*}
To summarize, we have
\begin{multline*}
\frac{1}{2} \, Q_{\delta} (v)  =  \\
  \left( v_{\bar{i}} \odot (sv_{\bar{j}}) \right) \, \mbox{tr} \Bigl[ P^* S(T) B_{\bar{i}} B_{\bar{j}}^1(0) \Bigr] +\left( v_{\bar{j}} \odot (sv_{\bar{i}}) \right) \, \mbox{tr} \Bigl[ P^* S(T) B_{\bar{j}} B_{\bar{i}}^1(0) \Bigr]\\
 +\left(\int_0^{\delta} v_{\bar{i}}(s)\, ds\right)\left(\int_0^{\delta} sv_{\bar{i}}(s)\, ds\right)\mbox{tr} \Bigl[ P^* S(T) B_{\bar{i}} B_{\bar{i}}^1(0) \Bigr]\\
 +\left( (tv_{\bar{i}}) \odot v_{\bar{j}} \right) \, \mbox{tr} \Bigl[ P^* S(T) B_{\bar{i}}^1(0) B_{\bar{j}} \Bigr]+\left( (tv_{\bar{j}}) \odot v_{\bar{i}} \right) \, \mbox{tr} \Bigl[ P^* S(T) B_{\bar{j}}^1(0) B_{\bar{i}} \Bigr]\\
+\left(\int_0^{\delta} v_{\bar{j}}(s)\, ds\right)\left(\int_0^{\delta} sv_{\bar{j}}(s)\, ds\right)\mbox{tr} \Bigl[ P^* S(T) B_{\bar{j}} B_{\bar{j}}^1(0) \Bigr]\\
 + \left( v_{\bar{i}} \odot \left(\frac{s^2v_{\bar{j}}}{2}\right) \right) \, \mbox{tr} \Bigl[ P^* S(T) B_{\bar{i}} B_{\bar{j}}^2(0) \Bigr]+\left( v_{\bar{j}} \odot \left(\frac{s^2v_{\bar{i}}}{2}\right) \right) \, \mbox{tr} \Bigl[ P^* S(T) B_{\bar{j}} B_{\bar{i}}^2(0) \Bigr]\\
 +\left(\int_0^{\delta} v_{\bar{i}}(s)\, ds\right)\left(\int_0^{\delta} \frac{s^2v_{\bar{i}}(s)}{2} \, ds\right)\mbox{tr} \Bigl[ P^* S(T) B_{\bar{i}} B_{\bar{i}}^2(0) \Bigr]\\
+\left( \left(\frac{t^2v_{\bar{i}}}{2}\right) \odot v_{\bar{j}} \right) \, \mbox{tr} \Bigl[ P^* S(T) B_{\bar{i}}^2(0) B_{\bar{j}} \Bigr] + \left( \left(\frac{t^2v_{\bar{j}}}{2}\right) \odot v_{\bar{i}} \right) \, \mbox{tr} \Bigl[ P^* S(T) B_{\bar{j}}^2(0) B_{\bar{i}} \Bigr]\\
 +\left(\int_0^{\delta} v_{\bar{j}}(s)\, ds\right)\left(\int_0^{\delta} \frac{s^2v_{\bar{j}}(s)}{2} \, ds\right)\mbox{tr} \Bigl[ P^* S(T) B_{\bar{j}} B_{\bar{j}}^2(0) \Bigr]\\
 +    \left(\int_0^{\delta} sv_{\bar{i}}(s) \, ds \right) \left(\int_0^{\delta} sv_{\bar{j}}(s) \, ds \right)       \, \mbox{tr} \Bigl[ P^* S(T) B_{\bar{j}}^1(0) B_{\bar{i}}^1(0) \Bigr]\\
 +\frac{1}{2}\left(\int_0^1 sv_{\bar{i}}(s) \, ds \right)^2 \, \mbox{tr} \Bigl[ P^* S(T) (B_{\bar{i}}^1(0))^2 \Bigr] \\
 + \frac{1}{2}\left(\int_0^1 sv_{\bar{j}}(s) \, ds \right)^2 \, \mbox{tr} \Bigl[ P^* S(T) (B_{\bar{j}}^1(0))^2 \Bigr]\\
 +  \left( (tv_{\bar{i}}) \odot (sv_{\bar{j}}) \right) \, \mbox{tr} \Bigl[ P^* S(T) \left[ B_{\bar{i}}^1(0), B_{\bar{j}}^1(0)\right] \Bigr].
\end{multline*}
We now need the following technical result whose proof is given in Appendix.

\begin{lemma}\label{LEMtechnical1nov}
Denote by $\mathcal{L}_{\bar{i},\bar{j}}$ the set of 
$$
v = \bigl( v_1, \ldots, v_k\bigr)\in L^2([0,1];\R^k)
$$
such that 
\begin{eqnarray*}
 v_i(t) =0 \quad \forall t \in [0,1], \, \forall  i  \in \{1, \ldots, k\} \setminus \{\bar{i},\bar{j}\},
\end{eqnarray*}
\begin{eqnarray*}
\int_0^1 v_{\bar{i}}(s)\, ds = \int_0^1 s v_{\bar{i}}(s) \,ds = \int_0^1 v_{\bar{j}}(s) \,ds = \int_0^1 s v_{\bar{j}}(s)\, ds = 0 ,
\end{eqnarray*} 
\begin{eqnarray*}
v_{\bar{i}} \odot (sv_{\bar{j}}) = v_{\bar{j}} \odot (sv_{\bar{i}}) = v_{\bar{i}} \odot (s^2v_{\bar{j}}) = v_{\bar{j}} \odot (s^2v_{\bar{i}}) = 0,
\end{eqnarray*}
and 
\begin{eqnarray*}
(tv_{\bar{i}}) \odot (sv_{\bar{j}}) > 0.
\end{eqnarray*}
Then, for every integer $N>0$, there are  a vector space  $L_{\bar{i},\bar{j}}^N \subset \mathcal{L}_{\bar{i},\bar{j}} \cup \{0\}$ of dimension $N$ and a constant $K(N)>0$  such that
\begin{eqnarray*}
 \bigl(tv_{\bar{i}}\bigr) \odot \bigl(sv_{\bar{j}}\bigr) \geq \frac{1}{K(N)} \|v\|_{L^2}^2 \qquad \forall v \in L_{\bar{i},\bar{j}}^N.
\end{eqnarray*}
\end{lemma}

Let us now show how to conclude the proof of Lemma \ref{LEM30oct}. Recall that  $P \in T_{\bar{X}(T)} \mbox{Sp}(m)$ was fixed such that $P$ belongs to $\left( \mbox{Im} \bigl( D_{0} E^{I,T} \bigr)\right)^{\perp} \setminus \{0\}$ and that by Lemma 
\ref{LEM20sept1}, we know that (taking $t=0$)
$$
P \cdot S(T)B_i^j(0) = 0 \qquad \forall j\geq 0, \, \forall i \in {1,..,k}.
$$
By (\ref{conditionLIEPROP3}) ($\bar{t}=0$), we also have 
\begin{multline*}
 \mbox{Span} \Bigl\{S(T)B_i^j(0),S(T)[B_i^1(0),B_s^1(0)] \, \vert \, i,s \in {1,..,k}, \, j=0,1,2 \Bigr\}= T_{\bar{X}(T)}Sp(m).
\end{multline*}
Consequently, we infer that there are $\bar{i},\bar{j}  \in \{1, \ldots, k\}$ with $\bar{i}\neq \bar{j}$ such that
\begin{eqnarray*}
  \mbox{tr} \Bigl( P^* S(T) \left[B_{\bar{i}}^1(0), B_{\bar{j}}^1(0)\right] \Bigr) <0.
\end{eqnarray*}
Let $N>0$ an integer be fixed, $L_{\bar{i},\bar{j}}^N \subset \mathcal{L}_{\bar{i},\bar{j}} \cup \{0\}$ of dimension $N$ and the constant $K(N)>0$ given by Lemma \ref{LEMtechnical1nov}, for every $\delta\in (0,t)$ denote by $L_{\delta}^N$ the vector space of $u\in  L^2\bigl([0,\delta];\R^k\bigr) \subset L^2\bigl([0,T];\R^k\bigr)$ such that there is $v\in L_{\bar{i},\bar{j}}$ satisfying
$$
u(t) = v(t/\delta) \qquad \forall t \in [0,\delta].
$$
For every $v  \in L_{\bar{i},\bar{j}}$, the control $u_{\delta}:[0,T]\rightarrow \R^k$ defined by 
$$
 u_{\delta}(t):= v(t/\delta) \qquad  t \in  [0,\delta]
$$
belongs to $L_{\delta}^N$ and by an easy change of variables,
$$
\bigl\|u_{\delta}\bigr\|^2 = \int_0^T \bigl| u_{\delta}(t)\bigr|^2 \, dt = \int_0^{\delta} \bigl| u_{\delta}(t)\bigr|^2 \, dt = \delta \int_0^{1} | v(t)|^2 \, dt = \delta \|v\|^2.
$$
Moreover it satisfies
$$
Q_{\delta} (u_{\delta}) = 2 \left(  \bigl(tv_{\bar{i}}\bigr) \odot \bigl(sv_{\bar{j}}\bigr)\right) \,  \delta^4 \,  \mbox{tr} \Bigl( P^* S(T) \left[B_{\bar{i}}^1(0), B_{\bar{j}}^1(0)\right] \Bigr).
$$
Then we infer that 
\begin{eqnarray*}
 \frac{Q_{\delta}(u_{\delta})}{\|u_{\delta}\|_{L^2}^2 \delta^4} &= & \frac{2 \left(  \bigl(tv_{\bar{i}}\bigr) \odot \bigl(sv_{\bar{j}}\bigr)\right) }{\delta \|v\|_{L^2}^2}  
\,  \mbox{tr} \Bigl( P^* S(T) \left[B_{\bar{i}}^1(0), B_{\bar{j}}^1(0)\right] \Bigr)\\
&  \leq &  \frac{2}{\delta K(N)} \,  \mbox{tr} \Bigl( P^* S(T) \left[B_{\bar{i}}^1(0), B_{\bar{j}}^1(0)\right] \Bigr).
\end{eqnarray*}
We get the result for $\delta>0$ small enough.
\end{proof}

We can now conclude the proof of Proposition  \ref{LIEPROP3}.  First we note that given $N\in \N$ strictly larger than $m(2m+1)$, if $L \subset L^2\bigl( [0,T];\R^k\bigr)$ is a vector space of dimension $N$, then the linear operator
$$
\left( D_0E^{I,T}\right)_{\vert L} \,: \, L \rightarrow T_{\bar{X}(T)} \mbox{Sp}(m) \subset M_{2m}(\R)
$$
has a kernel of dimension at least $N-m(2m+1)$, which means that 
$$
\mbox{Ker} \left( D_0E^{I,T}\right) \cap L
$$
has dimension at least $N-m(2m+1)$. Then, thanks to Lemma \ref{LEM30oct}, for every integer $N>0$, there are $\delta > 0$ and a subspace $L_{\delta}\subset  L^2\bigl([0,\delta];\R^k\bigr) \subset L^2\bigl([0,T];\R^k\bigr)$ such that the dimension of $\tilde{L}_{\delta}:=L_{\delta} \cap  \mbox{Ker} \left( D_0E^{I,T}\right) $ is larger than $N$ and the restriction of $Q_{\delta}$ to $\tilde{L}_{\delta}$ satisfies
$$
Q_{\delta} (u) \leq  -2C \|u\|_{L^2}^2 \delta^4 \qquad \forall u \in \tilde{L}_{\delta}.
$$
By Lemma \ref{LEM20sept2}, we have 
$$
 P \cdot D_0^2E^{I,T} (u)  \leq   Q_{\delta}(u)+C\delta^4 \, \|u\|_{L^2}^2 \qquad \forall u \in \tilde{L}_{\delta}.
$$
Then we infer that
\begin{eqnarray}\label{P7fev1}
 P \cdot D_0^2E^{I,T} (u) \leq - C\delta^4 \, \|u\|_{L^2}^2  < 0 \qquad \forall u \in \tilde{L}_{\delta}.
\end{eqnarray}
Note that since $E^{I,T}$ is valued in $\mbox{Sp}(m)$ which is a submanifold of $M_{2m}(\R)$, assumption (\ref{ind}) is not satisfied and Theorems \ref{THMopen} and \ref{THMopenquant} do not apply. 

Let $\Pi : M_{2m}(\R) \rightarrow T_{\bar{X}(T)}Sp(m)$ be the orthogonal projection onto $T_{\bar{X}(T)}Sp(m)$. Its restriction to $\mbox{Sp}(m)$, $\bar{\Pi}:=\Pi_{\vert \mbox{Sp}(m)}$, is a smooth mapping whose differential at $\bar{X}(T)$ 
is equal to the identity of $T_{\bar{X}(T)}Sp(m)$ so it is an isomorphism. Thanks to the Inverse Function Theorem (for submanifolds), $\bar{\Pi}$ is a local $C^{\infty}$-diffeomorphism at $\bar{X}(T)$. Hence there exist $\mu > 0$ such that the restriction of $\bar{\Pi}$ to $B \Bigl( \bar{X}(T) ,\mu \Bigr) \cap \mbox{Sp}(m)$ 
$$
\bar{\Pi}_{\vert B( \bar{X}(T) ,\mu) \cap Sp(m)} : B \Bigl( \bar{X}(T) ,\mu \Bigr) \cap \mbox{Sp}(m) \rightarrow \bar{\Pi}\Bigl(B \Bigl( \bar{X}(T) ,\mu \Bigr) \cap \mbox{Sp}(m)\Bigr)
$$ 
is a smooth diffeomorphism. The map $E^{I,T}$ is continuous so $$
\mathcal{U}:=(E^{I,T})^{-1}\Bigl(B \Bigl( \bar{X}(T) ,\mu \Bigr) \cap \mbox{Sp}(m)\Bigr)
$$
 is an open set of $L^2([0,T];\R^k)$ containing $\bar{u}=0$. Define the function $F : \mathcal{U} \rightarrow 
T_{\bar{X}(T)}Sp(m)$ by $F:=\bar{\Pi} \circ E^{I,T}=\Pi \circ E^{I,T}$. The mapping $F$ is  $C^{2}$ and we have
$$
F(\bar{u}) = \bar{X}(T), \quad D_{\bar{u}}F=D_{\bar{u}}E^{I,T} \quad \mbox{and} \quad D^2_{\bar{u}}F=\Pi \circ D^2_{\bar{u}}E^{I,T}.
$$
Let us check that $F$ satisfies assumption (\ref{ind}). For every $P \in T_{\bar{X}(T)} \mbox{Sp}(m)$ such that $P$ belongs to $\left( \mbox{Im} \bigl( D_{\bar{u}} F \bigr)\right)^{\perp} \setminus \{0\}$ and every $v \in L^2 ([0,T];\R^k)$, we have
$$
P \cdot D_{\bar{u}}^2E^{I,T} (v)=P \cdot \Pi \circ D_{\bar{u}}^2E^{I,T} (u) + P \cdot \Bigl( D_{\bar{u}}^2E^{I,T} (u)-\Pi \circ D_{\bar{u}}^2E^{I,T} (u)\Bigr).
$$
But
$$
D_{\bar{u}}^2E^{I,T} (u)-\Pi \circ D_{\bar{u}}^2E^{I,T} (u) \in \Bigl(T_{\bar{X}(T)}\mbox{Sp}(m)\Bigr)^{\perp}, 
$$
hence
$$
P \cdot D_{\bar{u}}^2E^{I,T} (u)=P \cdot D^2_{\bar{u}}F (u).
$$ 
Therefore, by (\ref{P7fev1}), assumption (\ref{ind}) is satisfied. Consequently, thanks to Theorem \ref{THMopenquant} there exist $\bar{\epsilon}, c  \in (0,1)$ such that for every $\epsilon \in (0,\bar{\epsilon})$ the following property holds: For every $u \in \mathcal{U}, Z \in T_{\bar{X}(T)}\mbox{Sp}(m)$ with
\begin{eqnarray*}
\left\| u - \bar{u} \right\|_{L^2} < \epsilon, \quad \left| Z- F(u) \right| < c\, \epsilon^2,
\end{eqnarray*}
there are $w_1, w_2 \in L^2\bigl([0,T];\R^k\bigr)$ such that $u+w_1+w_2\in \mathcal{U}$, 
\begin{eqnarray*}
Z = F \bigl(u+ w_1 + w_2\bigr),
\end{eqnarray*}
and
\begin{eqnarray*}
w_1 \in  \mbox{Ker} \left(D_{u} F\right), \quad \bigl\| w_1\bigr\|_{L^2} < \epsilon, \quad \bigl\| w_2\bigr\|_{L^2} < \epsilon^2.
\end{eqnarray*} 
Apply the above property with $u=\bar{u}$ and $X\in \mbox{Sp}(m)$ such that 
$$
\bigl| X- \bar{X}(T) \bigr| =: \frac{ c\epsilon^2}{2} \mbox{ with } \epsilon < \bar{\epsilon}.
$$
Set $Z:= \Pi(X)$, then we have ($\Pi$ is an orthogonal projection so it is 1-lipschitz)
$$
\left| Z- F(\bar{u}) \right|=\left| \Pi(X)- \Pi(\bar{X}(T)) \right| \leq \left| X- \bar{X}(T) \right| = \frac{ c\epsilon^2}{2} < c \epsilon^2.
$$
Therefore by the above property, there are $w_1, w_2 \in L^2\bigl([0,T];\R^k\bigr)$ such that $\tilde{u}:=\bar{u}+w_1+w_2\in \mathcal{U}$ satisfies 
\begin{eqnarray*}
Z = F \bigl(\tilde{u}\bigr),
\end{eqnarray*}
and
\begin{eqnarray*}
\bigl\| \tilde{u} \bigr\|_{L^2} \leq \left\| w_1\right\|_{L^2} + \left\| w_2 \right\|_{L^2} \leq \epsilon + \epsilon^2.
\end{eqnarray*}
Since $\bar{\Pi}_{\vert B( \bar{X}(T) ,\mu) \cap Sp(m)}$ is a local diffeomorphism, taking $\epsilon >0$ small enough, we infer that 
$$
X = E^{I,T} \bigl(\tilde{u}\bigr) \quad \mbox{and} \quad \bigl\|\tilde{u}\bigr\|_{L^2} \leq 2 \epsilon = 2 \sqrt{\frac{2}{c}}\, \left| X- \bar{X}(T) \right|^{1/2}.
$$
In conclusion, the control system (\ref{syscontrol}) is controllable at second order around $\bar{u} \equiv 0$, which concludes the proof of Proposition 2.2.

\subsection{Proof of Proposition \ref{LIEPROP3bis}}\label{ProofLIEPROP3bis}
As in the proof of Proposition \ref{LIEPROP3}, we may assume without loss of generality that $\bar{X}=I_{2m}$. Recall that for every $\theta \in \Theta$, $E_{\theta}^{I,T}  :  L^2 \bigl([0,T]; \R^k\bigr)   \rightarrow \mbox{Sp}(m) \subset M_{2m}(\R)$ denotes the End-Point mapping associated with  (\ref{syscontrol}) with parameter $\theta$ starting at $I=I_{2m}$. Given $\theta \in \Theta$ two cases may appear, either $E_{\theta}^{I,T}$ is a submersion at $\bar{u}\equiv 0$ or is not submersion at $\bar{u}\equiv 0$. Let us denote by $\Theta_1 \subset \Theta$ the set of parameters $\theta$ where  $E_{\theta}^{I,T}$ is submersion at $\bar{u}\equiv 0$ and by $\Theta_2$ its complement in $\Theta$. By continuity of the mapping $\theta \mapsto  D_0E_{\theta}^{I,T}$ the set $\Theta_1$ is open in $\Theta$  while $\Theta_2$ is compact.

For every $\theta \in \Theta_1$, since $E_{\theta}^{I,T}$ is submersion at $\bar{u}$, we have uniform controllability at first order around $\bar{u}$ for a set of parameters close to $\bar{\theta}$.  So we need to show that we have controllability at second order around $\bar{u}$ for any parameter in some neighborhood of $\Theta_2$. 

By the proof of Proposition  \ref{LIEPROP3} (see (\ref{P7fev1})), for every $\theta \in \Theta_2$, every  $P$ in the nonempty set  $\left( \mbox{Im} \bigl( D_{0} E_{\theta}^{I,T} \bigr)\right)^{\perp} \setminus \{0\}$ and every integer $N>0$ there exists a finite dimensional subspace $L_{\theta,P,N}  \subset L^2([0,T];\R^k)$ with 
$$
D:= \dim \left(  L_{\theta,P,N} \right) >N,
$$
 such that 
$$
 P \cdot D_0^2E^{I,T}_{\theta} (u) < 0 \qquad \forall u \in L_{\theta,P,N} \setminus \{0\}
$$
and 
$$
 \dim \left(  L_{\theta,P,N}\cap  \mbox{Ker} \left( D_0E_{\theta}^{I,T} \right) \right)\geq N-m(2m+1).
$$
By bilinearity of $u \mapsto  P \cdot D_0^2E^{I,T}_{\theta} (u)$ and compactness of the sphere in $ L_{\theta,P,N}$, there is $C_{\theta,P,N}>0$ such that 
 $$
 P \cdot D_0^2E_{\theta}^{I,T} (u)  \leq  -C_{\theta,P,N} \, \|u\|_{L^2}^2 \qquad \forall u \in L_{\theta,P,N}.
$$
Let $u^1, \ldots, u^D \in L^2([0,T];\R^k)$ be a basis of  $L_{\theta,P,N}$ such that 
$$
\|u^i\|_{L^2} = 1 \qquad \forall i=1, \ldots, D.
$$
Since the set of controls $u \in C^{\infty}([0,T],\R^k)$ with $\mbox{Supp} (u) \subset (0,T)$  is dense in $L^2([0,T],\R^k)$, there is a linearly independent family $\tilde{u}^1, \ldots, \tilde{u}^D$ in $C^{\infty}([0,T],\R^k)$ with $\mbox{Supp} (u) \subset (0,T)$ (from now we will denote by  $C_0^{\infty}([0,T],\R^k)$ the set of functions in  $C^{\infty}([0,T],\R^k)$  with support in $(0,T)$) such that 
 $$
 P \cdot D_0^2E_{\theta}^{I,T} (u)  \leq  - \frac{C_{\theta,P,N}}{2} \, \|u\|_{L^2}^2 \qquad \forall u \in \tilde{L}_{\theta,P,N}:=\mbox{Span} \left\{\tilde{u}^i \, \vert \, i=1, \ldots, D\right\}.
$$
Moreover by continuity of the mapping $(P,\theta)\mapsto  P \cdot D_0^2E_{\theta}^{I,T}$, we may also assume that the above inequality holds for any $\tilde{\theta}$ close to $\theta$ and $\tilde{P}$ close to $P$.  Let an integer $N>0$ be fixed, we check easily that the set 
$$
\mathcal{A} := \Bigl\{ (\theta,P) \in \Theta \times M_{2m}(\R) \, \vert \, \|P\|=1, \, P \in     \left( \mbox{Im} \bigl( D_{0} E_{\theta}^{I,T} \bigr)\right)^{\perp} \Bigr\}
$$
is compact. Therefore, by the above discussion there is a finite family $\{(\theta_a,P_a)\}_{a=1, \ldots, A}$ in $\mathcal{A}$ together with a finite family of open neighborhoods  $\{\mathcal{V}_a\}_{a=1, \ldots, A}$ of the pairs $(\theta_a,P_a)$ ($a=1, \ldots, A$) in $\mathcal{A}$ such that 
$$
\mathcal{A}  = \bigcup_{a=1}^A \mathcal{V}_a
$$
and there is a finite family of $\{\tilde{L}_a\}_{a=1, \ldots, A}$ of finite dimensional subspaces in  $C^{\infty}_0([0,T],\R^k)$ such that 
 $$
 P \cdot D_0^2E_{\theta}^{I,T} (u)  <0 \qquad \forall u \in \tilde{L}_a \setminus \{0\},
$$
for every $a\in \{1, \ldots, A\}$ and any $(\theta,P)$ in $\mathcal{V}_a$. Then set 
$$
\tilde{L}(N) :=  \bigcup_{a=1}^A \tilde{L}_a    \, \subset C^{\infty}_0([0,T],\R^k),
$$
pick a basis $\tilde{u}^1, \ldots, \tilde{u}^B$ of $\tilde{L}(N)$ and define $F^N : \Theta \times \R^B \rightarrow \mbox{Sp}(m)$ by 
$$
F^N_{\theta}(\lambda):=E_{\theta}^{I,T}\left(\sum_{b=1}^B \lambda_b \tilde{u}^b\right) \qquad \forall \lambda=(\lambda_1,...,\lambda_B) \in \R^B, \, \forall \theta \in \Theta.
$$
By construction, $F^N$ is at least $C^2$ and for every $\theta\in \Theta_2$ and every $P \in \left( \mbox{Im} \bigl( D_{0} F_{\theta}^N \bigr)\right)^{\perp} \setminus \{0\}$, there is a subspace $L_{\theta,P}^N \subset \tilde{L}(N)$ such that
$$
\dim \left(  L_{\theta,P}^N \right) >N,
$$
$$
 P \cdot D_0^2F_{\theta}^N (u) < 0 \qquad \forall u \in L_{\theta,P}^N \setminus \{0\}
$$
and 
$$
 \dim \left(  L_{\theta,P}^N \cap  \mbox{Ker} \left( D_0F_{\theta}^{N} \right) \right)\geq N-m(2m+1).
$$
As in the proof of Proposition \ref{LIEPROP3}, we need to be careful because $F^N$ is valued in $\mbox{Sp}(m)$. Given $\bar{\theta}$, we denote by $\Pi_{\bar{\theta}} : M_{2m}(\R) \rightarrow T_{\bar{X}_{\bar{\theta}}(T)}Sp(m)$ the orthogonal projection onto $T_{\bar{X}_{\bar{\theta}}(T)}\mbox{Sp}(m)$ and observe that the restriction of $\Pi$ to $ T_{\bar{X}_{\theta}(T)}\mbox{Sp}(m)$  is an isomorphism for $\theta \in \mathcal{W}_{\bar{\theta}}$ an open neighborhood of  $\bar{\theta}$. Then we define  $G^{N,\bar{\theta}} : \Theta \times \R^B \rightarrow  T_{\bar{X}_{\bar{\theta}}(T)}\mbox{Sp}(m)$ by 
$$
G_{\theta}^{N,\bar{\theta}}(\lambda):=\Pi_{\bar{\theta}} \left(  F_{\theta}^N(\lambda) \right)  \qquad \forall \lambda \in \R^B, \, \forall \theta \in \mathcal{W}_{\bar{\theta}}.
$$
Taking $N$ large enough, by compactness of $\Theta_2$, a parametric version of  Theorem \ref{THMopenquant} (see \cite{lazragthesis}) yields  $\bar{\epsilon}, c  \in (0,1)$ such that for every $\epsilon \in (0,\bar{\epsilon})$ and for any $\bar{\theta} \in \Theta_2$ the following property holds:
For every $\theta \in \mathcal{W}_{\bar{\theta}}, \lambda \in  \R^B, Z \in T_{\bar{X}_{\theta}(T)}\mbox{Sp}(m)$ with
\begin{eqnarray*}
| \lambda |_{L^2} < \epsilon, \quad \left| Z-  G_{\theta}^{N,\bar{\theta}}(\lambda) \right| < c\, \epsilon^2,
\end{eqnarray*}
there are $\beta_1, \beta_2 \in \R^B$ such that  
\begin{eqnarray*}
Z = G_{\theta}^{N,\bar{\theta}} \bigl(\lambda+ \beta_1 + \beta_2\bigr),
\end{eqnarray*}
and
\begin{eqnarray*}
\beta_1 \in  \mbox{Ker} \left(D_{\lambda} G_{\theta}^{N,\bar{\theta}}\right), \quad \bigl| \beta_1\bigr| < \epsilon, \quad \bigl| \beta_2\bigr| < \epsilon^2.
\end{eqnarray*} 
Note that any 
$$
\sum_{b=1}^B \lambda_b \tilde{u}^b \quad \mbox{with} \quad  \lambda=(\lambda_1,...,\lambda_B) \in \R^B
$$
is a smooth control whose support is strictly contained in $[0,T]$. Then proceeding as in the proof of Proposition \ref{LIEPROP3} we conclude easily.

\section{Proof of Theorem \ref{THMmain}}\label{proofTHMmain}

We recall that given a geodesic $\gamma_{\theta}:[0,T] \rightarrow M$, an interval $[t_1,t_2] \subset [0,T]$ and $\rho>0$,  $\mathcal{C}_g\left(\gamma_{\theta}\bigl( [t_1,t_2]\bigr);\rho\right)$ stands for the open geodesic cylinder along $\gamma_{\theta}\bigl( [t_1,t_2]\bigr)$ of radius $\rho$, that is the open set defined by
\begin{multline*}
\mathcal{C}_g\left(\gamma_{\theta}\bigl( [t_1,t_2]\bigr);\rho\right) := \\
\Bigl\{ p \in M \, \vert \, \exists t\in (t_1,t_2) \mbox{ with } d_g\bigl(p,\gamma_{\theta}(t)\bigr)<\rho \mbox{ and }  d_g\bigl(p,\gamma_{\theta}([t_1,t_2])\bigr) = d_g\bigl(p,\gamma_{\theta}(t)\bigr) \Bigr\}.
\end{multline*}
The following holds: 

\begin{lemma} \label{tauT}
Let $(M,g)$ be a compact Riemannian manifold of dimension $\geq 2$. Then for every $T>0$, there exists $\tau_T \in (0,T)$ such that for every $\theta \in T_1M$, there are $\bar{t}\in [0,T-\tau_T]$ and $\bar{\rho}>0$ such that
$$
\mathcal{C}_g\Bigl( \gamma_{\theta}\left( \bigl[ \bar{t},\bar{t}+\tau_T\bigr]\right) ;\bar{\rho} \Bigr) \cap \gamma_{\theta}([0,T]) = \gamma_{\theta}\left( \bigl( \bar{t},\bar{t}+\tau_T\bigr)\right).
$$
\end{lemma}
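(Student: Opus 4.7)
The proof is a two-layer compactness argument. First I will prove pointwise existence: for each fixed $\theta \in T_1 M$, one can choose $\bar{t}(\theta), \tau(\theta), \bar{\rho}(\theta) > 0$ so that the cylinder equality in the statement holds. Then continuous dependence of $\gamma_\theta$ on $\theta$, combined with compactness of $T_1 M$, upgrades these individual $\tau(\theta)$'s to a uniform $\tau_T>0$ valid for every $\theta$.

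\textbf{Per-geodesic step.} Fix $\theta$, set $\gamma = \gamma_\theta$, and let $r_0>0$ be the injectivity radius of $(M,g)$. I would introduce the set of time pairs corresponding to \emph{transverse} self-intersections well separated in time,
\[
D_{\mathrm{trans}} := \bigl\{(s,t)\in[0,T]^2 : \gamma(s)=\gamma(t),\ |s-t|\geq r_0/4,\ \dot\gamma(s)\text{ not a scalar multiple of }\dot\gamma(t)\bigr\}.
\]
Transverse self-intersections of a smooth immersed curve in a smooth manifold are isolated by the implicit function theorem applied in normal coordinates, so $D_{\mathrm{trans}}$ is finite and its projection $\pi_2(D_{\mathrm{trans}}) \subset [0,T]$ is a finite set. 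I then pick $\bar{t}$ and $\tau \in (0, r_0/4)$ so that $[\bar{t}, \bar{t} + \tau] \subset [0, T]$ is disjoint from $\pi_2(D_{\mathrm{trans}})$. Any $s \in [0, T]$ with $\gamma(s) = \gamma(t)$ for some $t \in [\bar{t}, \bar{t} + \tau]$ and $|s - t| \geq r_0/4$ is then necessarily a \emph{retracing}: $\dot\gamma(s)$ is parallel to $\dot\gamma(t)$, and by uniqueness of geodesics the arc of $\gamma$ near $s$ coincides as a subset of $M$ with a subarc of $\gamma([\bar{t}, \bar{t} + \tau])$, hence lies in $\gamma((\bar{t}, \bar{t} + \tau))$ and is compatible with the cylinder equality. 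Nearby contributions coming from $|s-t| < r_0/4$ with $s \notin (\bar{t}, \bar{t} + \tau)$ are controlled by local injectivity within the injectivity radius: the nearest-point projection of $\gamma(s)$ on $\gamma([\bar{t}, \bar{t} + \tau])$ lands at an endpoint, so $\gamma(s)$ is automatically outside the cylinder. A compactness argument then yields a strictly positive lower bound on the distance from the remaining (far-time, non-retracing) pieces of $\gamma$ to the interior sub-arc $\gamma((\bar{t}, \bar{t} + \tau))$; choosing $\bar{\rho}$ smaller than this bound completes the pointwise construction.

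\textbf{Uniformity and main obstacle.} The cylinder equality is stable under perturbations: if $(\theta_0, \bar{t}_0, \tau_0, \bar{\rho}_0)$ satisfies it, then $(\theta, \bar{t}_0, \tau_0, \bar{\rho}_0/2)$ also does for every $\theta$ in a sufficiently small neighborhood of $\theta_0$, because $\gamma_\theta$ depends continuously on $\theta$ and the positive distance gap between the interior sub-arc and the ``outside-minus-retracings'' set is preserved under small $C^1$ perturbations. A finite subcover of $T_1 M$ then yields $\tau_T := \min_i \tau(\theta_i) > 0$. The main obstacle I expect is the clean separation of \emph{retracings} from \emph{transverse crossings}: a naive attempt to avoid all self-intersections fails on $(M,g) = S^2$ with a multiply wrapped equatorial geodesic (where every point is self-intersecting), yet the lemma still holds. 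The essential point is that retracings preserve the image set in $M$, and so are harmless for the cylinder equality, while transverse crossings---the only genuine obstruction---are only finitely many in finite time, by smoothness and the implicit function theorem.
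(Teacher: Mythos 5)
Your route differs from the paper's in the way uniformity is obtained, and that is where the gap lies. The paper proves an explicit bound: by induction, a non-periodic geodesic of length $kr_g$ has at most $N(k)=k(k-1)/2$ self-intersections (a fresh arc of length $r_g$ meets each of the $k$ preceding $r_g$-arcs at most once, else two distinct geodesic arcs of length $\le r_g$ share both endpoints, contradicting the injectivity radius), and pigeonhole over a fixed subdivision of $[0,kr_g]$ then produces a $\tau_T$ that works for \emph{every} $\theta$ at once, with no appeal to compactness of $T_1M$. Your per-geodesic step is essentially correct --- separating retracings from transverse crossings is the right insight, and for unit-speed geodesics opposite-velocity retracings are impossible, so every retracing forces periodicity --- but the compactness step assumes a local stability that you have not justified and that actually fails.

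The claim that $(\theta,\bar t_0,\tau_0,\bar\rho_0/2)$ still satisfies the cylinder equality for $\theta$ near $\theta_0$ is false when $\gamma_{\theta_0}$ is a closed geodesic of period $L<T$. For such $\theta_0$ the outside-minus-retracings set is small and the positive gap is easy to obtain; but a nearby non-periodic $\gamma_\theta$ has an \emph{empty} retracing set, so the near-retrace arcs $\gamma_\theta\bigl([\bar t_0+nL',\bar t_0+\tau_0+nL']\bigr)$ (with $L'\approx L$) shadow the central arc at a distance tending to $0$, and no fixed $\bar\rho$ survives. Allowing $\bar\rho(\theta)\to 0$ is within the rules, but broken-open retracings of $\gamma_\theta$ can also become genuine transverse crossings whose second-coordinate projections fall inside $[\bar t_0,\bar t_0+\tau_0]$, forcing a shift of $\bar t$ --- and without a uniform bound on how many transverse-crossing times $\gamma_\theta$ can have, you cannot guarantee that an interval of length $\tau_0$ still fits. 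That uniform count is exactly what the paper's $N(k)$ provides, and is the missing ingredient in your compactness step. A smaller remark on the per-geodesic step: ``isolated by the IFT, hence finite'' must also rule out accumulation of transverse pairs at a \emph{retracing} pair, where the IFT argument gives no isolation; this holds because near a retracing the two branches of $\gamma$ coincide locally, so the only nearby zeros of $(s,t)\mapsto\gamma(s)-\gamma(t)$ lie on the retracing line, but it should be said explicitly.
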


\begin{proof}[Proof of Lemma \ref{tauT}]
Let $r_g>0$ be the injectivity radius of $(M,g)$, that is the supremum of $r>0$ such that any geodesic arc of length $r$ is minimizing between its end-points. We call self-intersection of the geodesic curve $\gamma_{\theta}([0,T])$ any $p\in M$ such that there are $t \neq t'$ in $[0,T]$ such that $\gamma_{\theta}(t)=\gamma_{\theta}(t') =p$. We claim that for every integer $k>0$ the number of self-intersection of a (non-periodic) geodesic of length $k \, r_g$ is bounded by 
$$
N(k) := \sum_{i=0}^{k-1} i = \frac{k(k-1)}{2}.
$$ 
We prove it by induction. Since any geodesic of length $r_g$ has no self-intersection, the result holds for $k=1$. Assume that we proved the result for $k$ and prove it for $(k+1)$. Let $\gamma :[0,(k+1)r_g]\rightarrow M$ be a unit speed geodesic of length $(k+1)r_g$.  The geodesic segment $\gamma([kr_g,(k+1)r_g])$ has no self-intersection but it could intersect the segment $\gamma ([0,kr_g])$. If the number of intersection of $\gamma([kr_g,(k+1)r_g])$ with $\gamma ([0,kr_g])$ is greater or equal than $(k+1)$, then there are $t_1 \neq t_2 \in [kr_g,(k+1)r_g]$, $i\in \{0,\ldots, k-1\}$, and $s_1, s_2 \in [ir_g, (i+1)r_g]$ such that 
$$
\gamma(t_1) = \gamma (s_1) \quad \mbox{and} \quad  \gamma(t_2) = \gamma (s_2).
$$
Since $\gamma$ is not periodic, this means that two geodesic arcs of length $\leq r_g$ join $\gamma(t_1)$ to $\gamma (t_2)$, a contradiction. We infer that the number of self-intersection of $\gamma$ is bounded by $N(k)+k$, and in turn that it is bounded by $N(k+1)$. We deduce that for every integer $k\geq 2$, all the disjoint open intervals
$$
I_i := \left( i \frac{kr_g}{N(k)}, (i+1) \frac{kr_g}{N(k)}\right) \qquad i=0, \ldots, N(k)-1
$$
can not contain a point of self-intersection of a unit speed geodesic $\gamma :[0,kr_g]\rightarrow M$. Hence for every  unit speed geodesic $\gamma :[0,kr_g]\rightarrow M$ there is $i \in \{0, \ldots, N(k)-1\}$ such that no self-intersection of $\gamma$ is contained in the closed interval 
$$
\left[ i \frac{kr_g}{N(k)}, (i+1) \frac{kr_g}{N(k)}\right].
$$
We conclude easily. 
\end{proof}

Let $T>0$ be fixed,  $\tau_T \in (0,T) $ given by Lemma \ref{tauT} and  $\gamma_{\theta}:[0,T] \rightarrow M$ be a unit speed geodesic of length $T$. Then there are $\bar{t}\in [0,T-\tau_T]$ and $\rho>0$ such that
$$
\mathcal{C}_g\Bigl( \gamma_{\theta}\left( \bigl[ \bar{t},\bar{t}+\tau_T\bigr]\right) ;\rho \Bigr) \cap \gamma_{\theta}([0,T]) = \gamma_{\theta}\left( \bigl( \bar{t},\bar{t}+\tau_T\bigr)\right).
$$
Set 
$$
 \bar{\theta} = \left( \bar{p},\bar{v}\right) :=  \left( \gamma_{\theta}(\bar{t}),\dot{ \gamma}_{\theta}(\bar{t})\right)  \quad \tilde{\theta} = \left(\tilde{p},\tilde{v}\right) := \left( \gamma_{\theta}(\bar{t}),\dot{\gamma}_{\theta}(\bar{t}+\tau_T)\right),
 $$
 $$
  \quad \theta_T = (p_T,v_T) := \left(\gamma_{\theta}(T), \dot{\gamma}_{\theta}(T) \right),
$$
and consider local transverse sections $\Sigma_0, \bar{\Sigma}, \tilde{\Sigma}, \Sigma_T \subset T_1M$ respectively tangent to  $N_{\theta}, N_{\bar{\theta}}, N_{\tilde{\theta}}, N_{\theta_T}$. Then we have 
$$
P_g(\gamma)(T)=D_{\theta}\P_g(\Sigma_0,\Sigma_T,\gamma) = D_{\tilde{\theta}}\P_g\bigl(\tilde{\Sigma},\Sigma_T,\gamma\bigr) \circ D_{\bar{\theta}}\P_g\bigl(\bar{\Sigma},\tilde{\Sigma},\gamma\bigr)\circ D_{\theta}\P_g \bigl(\Sigma_0,\bar{\Sigma},\gamma\bigr).
$$
Since the sets of symplectic endomorphism of $\mbox{Sp}(n-1)$ of the form $D_{\tilde{\theta}}\P_g\bigl(\tilde{\Sigma},\Sigma_T,\gamma\bigr)$ and $D_{\theta}\P_g \bigl(\Sigma_0,\bar{\Sigma},\gamma\bigr)$ (that is the differential of Poincar\'e maps associated with geodesics of lengths $T-\bar{t}-\tau_T$ and $\bar{t}$) are compact and the left and right translations in $\mbox{Sp}(n-1)$ are diffeomorphisms,  it is sufficient to prove Theorem \ref{THMmain} with $\tau_T=T$. More exactly, it is sufficient to show that there are $\delta_{T}, K_T>0$ such that for every $\delta \in (0, \delta_{T})$ and every $\rho>0$,  the following property holds:\\
Let $\gamma_{\theta}:[0,\tau_T] \rightarrow M$ be a geodesic in $M$, $U$ be the open ball centered at $P_g(\gamma)(\tau_T)$ of radius $\delta$ in $\mbox{Sp}(n-1)$.  Then for each symplectic map $A \subset U$ there exists a $C^{\infty}$ metric $h$ in $M$ that is conformal to $g$, $h_{p}(v,w) = (1+\sigma(p))g_{p}(v,w)$, such that 
\begin{enumerate}
\item The geodesic $\gamma_{\theta} : [0,\tau_T] \longrightarrow M$ is still a geodesic of $(M,h)$, 
\item $\mbox{Supp} (\sigma) \subset \mathcal{C}_g\left( \gamma_{\theta}\left( [ 0,\tau_T]\right) ;\rho \right)$,
\item $P_{h}(\gamma_{\theta})(\tau_T) = A$,
\item the $C^{2}$ norm of the function $\sigma$ is less than $K_T \sqrt{\delta}$.
\end{enumerate}
Set $\tau:=\tau_T$ and let $\gamma:[0,\tau] \rightarrow M$ a geodesic in $M$ be fixed, we consider  a Fermi coordinate system $\Phi(t,x_{1},x_{2},..,x_{n-1})$, $t \in (0,\tau)$, $(x_{1},x_{2},..,x_{n-1}) \in (-\delta, \delta)^{n-1}$ along $\gamma([0,\tau])$, where $t$ is the arc length of $\gamma$, and the coordinate vector fields $e_{1}(t), \ldots, e_{n-1}(t)$ of the system are orthonormal and parallel along $\gamma$. Let us consider  the family of smooth functions $\{P_{ij}\}_{i,j=1, \ldots, n-1} : \R^{n-1} \rightarrow \R$ defined by
$$
P_{ij} \bigl(y_{1},y_{2},..,y_{n-1}\bigr) := y_{i}y_{j} \, Q \bigl(|y|\bigr) \qquad \forall i\neq j \in \{1,\ldots,n-1\}
$$
 and
$$ 
P_{ii}(y_{1},y_{2},..,y_{n-1}) :=   \frac{y_{i}^{2}}{2} \, Q \bigl(|y|\bigr)  \qquad \forall i \in \{1,\ldots,n-1\},
$$
for every $y=\left(y_{1},y_{2},..,y_{n-1}\right) \in \R^{n-1}$ where $Q:[0,+\infty) \rightarrow [0,+\infty)$ is a smooth cutoff function satisfying
$$
\left\{
\begin{array}{rcl}
Q(\lambda) = 1 & \mbox{ if } & \lambda \leq 1/3 \\
Q(\lambda) = 0 & \mbox{ if } & \lambda \geq 2/3.
\end{array}
\right.
$$
Given a radius $\rho>0$ with $\mathcal{C}_g\left( \gamma \left( [ 0,\tau]\right) ;\rho \right) \subset \Phi(\left(0, \tau) \times (-\delta, \delta)^{n-1}\right) $ and a family of smooth function $u=(u_{ij})_{i\leq j=1,\ldots,n-1}: [0,\tau] \rightarrow \R$ such that
$$
\mbox{Supp} \bigl( u_{ij}\bigr) \subset (0,\tau) \qquad \forall i\leq j\in \{1,\ldots,n-1\},
$$
we define a family of smooth perturbations 
$$
\left\{\sigma_{ij}^{\rho,u}\right\}_{i\leq j = 1,\ldots,n-1} \, : \, M \, \longrightarrow \, \R
$$
with support in $\Phi(\left(0, \tau) \times (-\delta, \delta)^{n-1}\right) $ by 
$$ 
\sigma_{ij}^{\rho,u} \left(\Phi \bigl(t,x_{1},x_{2},..,x_{n-1}\bigr) \right) := \rho^{2}\, \, u_{ij}(t) P_{ij} \left(\frac{x_{1}}{\rho}, \frac{x_{2}}{\rho}, .., \frac{x_{n-1}}{\rho}\right),
$$
for every $p =    \Phi \bigl(t,x_{1},x_{2},..,x_{n-1}\bigr)   \in \Phi \left((0, \tau) \times (-\delta, \delta)^{n-1}\right)$ and we define $\sigma^{\rho,u} : M \rightarrow \R$ by
$$
\sigma^{\rho,u}   := \sum_{i,j=1, i\leq j}^{n-1} \sigma_{ij}^{\rho,u}.
$$
The following result follows by construction, its proof is left to the reader. The notation $\partial_l$ with $l=0,1, \ldots, n-1$ stands for the partial derivative in coordinates $x_0=t, x_1, \ldots, x_{n-1}$ and $H\sigma^{\rho,u}$ denotes the Hessian of $\sigma^{\rho,u}$ with respect to $g$.

\begin{lemma} \label{LEMlocalpert}
The following properties hold:
\begin{enumerate}
\item $\mbox{Supp} (\sigma^{\rho,u}) \subset \mathcal{C}_g\left( \gamma \left( [ 0,\tau]\right) ;\rho \right) $, 
\item $\sigma^{\rho,u}(\gamma(t)) = 0$ for every $t \in (0, \tau)$,
\item $\partial_l \sigma^{\rho,u}(\gamma(t)) = 0$ for every $t \in (0, \tau)$ and $l =0, 1, \ldots, n-1$,
\item $ \left( H\sigma^{\rho,u}\right)_{i,0} (\gamma(t))  =  0$ for every $t \in (0, \tau)$ and $i=1, \ldots, n-1$,
\item $ \left( H\sigma^{\rho,u}\right)_{i,j} (\gamma(t)) =  u_{ij}(t)$ for every $t \in (0, \tau)$ and $i, j =1, \ldots, n-1$,
\item $ \| \sigma^{\rho,u} \|_{C^{2}} \leq C \|u\|_{C^2}$ for some universal constant $C>0$.
\end{enumerate}
\end{lemma}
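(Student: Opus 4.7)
The plan is to verify each of the six properties by a direct computation in the Fermi chart $\Phi$, exploiting two structural features: the explicit polynomial-plus-cutoff form of each $P_{ij}$, and the fact that in Fermi coordinates the metric matrix is the identity along $\gamma$ and the coordinate frame is parallel there. Once property (3) is established, the $g$-Hessian of $\sigma^{\rho,u}$ at a point $\gamma(t)$ coincides with the ordinary coordinate Hessian $\bigl(\partial_k\partial_l\sigma^{\rho,u}\bigr)$, because the Christoffel correction $\Gamma_{kl}^m \partial_m \sigma^{\rho,u}$ contracts with vanishing first derivatives; this reduces (4) and (5) to a plain calculus check.

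First I would record the key facts about $P_{ij}$ at the origin. Since $Q(|y|)\equiv 1$ for $|y|\leq 1/3$, each $P_{ij}$ agrees with its quadratic monomial in a neighborhood of $0$; hence $P_{ij}(0)=0$ and $\nabla P_{ij}(0)=0$, while the Hessian of $P_{ij}$ at $0$ is the Hessian of $y_iy_j$ when $i\neq j$ (the symmetric matrix carrying $1$ at the $(i,j)$ and $(j,i)$ entries, $0$ elsewhere) or of $y_i^2/2$ when $i=j$ (the single entry $1$ at position $(i,i)$). Properties (2) and (3) then follow from $\sigma_{ij}^{\rho,u}\bigl(\Phi(t,x)\bigr) = \rho^2 u_{ij}(t) P_{ij}(x/\rho)$: the values and spatial gradients inherit the vanishing at $x=0$, while $\partial_t \sigma_{ij}^{\rho,u}\bigl(\gamma(t)\bigr) = \rho^2 \dot u_{ij}(t) P_{ij}(0) = 0$. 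Property (1) is immediate because the support condition $Q(|x|/\rho)\neq 0$ forces $|x|\leq 2\rho/3$, and the $t$-support lies in $(0,\tau)$ (since $u_{ij}$ does); in Fermi coordinates the geodesic cylinder $\mathcal{C}_g(\gamma;\rho)$ contains the Euclidean tube $\{|x|<\rho\}$, which contains the support.

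For (4) and (5) I would compute the coordinate Hessian entries. The mixed partial $\partial_t\partial_{x_k}\sigma_{ij}^{\rho,u}$ equals $\rho\,\dot u_{ij}(t)\,\partial_{y_k} P_{ij}(x/\rho)$, which vanishes at $x=0$ because $\nabla P_{ij}(0)=0$; summing over $(i,j)$ gives (4). For (5), $\partial_{x_k}\partial_{x_l}\sigma_{ij}^{\rho,u} = u_{ij}(t)\,\partial_{y_k}\partial_{y_l} P_{ij}(x/\rho)$, the two factors $1/\rho$ from the chain rule canceling the prefactor $\rho^2$. Evaluating at $x=0$ and summing over $i\leq j$ against the explicit Hessian matrices from the previous paragraph yields precisely $u_{kl}(t)$ under the symmetric convention $u_{lk}:=u_{kl}$.

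Finally, for (6) I would track the $\rho$-scaling of each derivative of $\sigma_{ij}^{\rho,u}(\Phi(t,x))=\rho^2u_{ij}(t)P_{ij}(x/\rho)$: each spatial derivative pulls a $1/\rho$ from the chain rule, so the first spatial derivative is $O(\rho\,\|u_{ij}\|_{C^0})$, the second is $O(\|u_{ij}\|_{C^0})$, time derivatives keep the $\rho^2$ prefactor, and mixed derivatives are intermediate; for $\rho$ bounded all are dominated by a universal constant times $\|u_{ij}\|_{C^2}$. Passing from coordinate partials to the $g$-covariant $C^2$ norm adds only Christoffel-type corrections, which involve derivatives of the metric (uniformly bounded on the fixed tube) times lower-order derivatives of $\sigma^{\rho,u}$, and are absorbed into the same constant. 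Summing over the finitely many pairs $(i,j)$ yields the bound. The whole proof is pure bookkeeping; the only step deserving a second glance is the scaling balance in (6), where the $\rho^2$ prefactor is calibrated exactly so that two spatial derivatives remain uniformly controlled.
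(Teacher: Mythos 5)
The paper explicitly leaves this lemma's proof to the reader ("The following result follows by construction, its proof is left to the reader"), so there is no paper proof to compare against. Your direct computation in the Fermi chart is exactly the intended verification and is correct: the observations that $P_{ij}(0)=0$, $\nabla P_{ij}(0)=0$, and $\partial^2 P_{ij}(0)$ is the standard symmetric basis element; the reduction of the covariant Hessian to the coordinate Hessian along $\gamma$ once (3) is in hand (since the Christoffel term contracts with a vanishing gradient); the Kronecker-delta bookkeeping giving (5) under $u_{ji}:=u_{ij}$; and the $\rho$-scaling count for (6), where the $\rho^2$ prefactor exactly compensates two applications of the chain rule's $1/\rho$. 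One point worth making slightly more carefully in a final write-up: for (1), the open cylinder $\mathcal{C}_g\bigl(\gamma([0,\tau]);\rho\bigr)$ excludes points whose nearest projection onto the arc is an endpoint, so one should appeal to the fact that $\mbox{Supp}(u)$ is compactly contained in $(0,\tau)$ and that $\rho<\bar\rho$ is small enough that the nearest-point projection of any $\Phi(t,x)$ with $t\in\mbox{Supp}(u)$, $|x|\le 2\rho/3$, lands at an interior parameter; your argument contains the ingredients but states the tube--cylinder inclusion a little loosely.
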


Set $m=n-1$ and $k:=m(m+1)/2$. Let $u=(u_{ij})_{i\leq j=1,\ldots,n-1}: [0,\tau] \rightarrow \R$  be a family of smooth functions with support strictly contained in $(0,\tau)$ and $\rho \in (0,\bar{\rho})$ be fixed, using the previous notations we set the conformal metric
$$
h := \left( 1 + \sigma^{\rho,u} \right)^2 \, g.
$$ 
We denote by $\langle \cdot, \cdot \rangle, \nabla, \Gamma, \H, \Rm$ respectively the scalar product, gradient, Christoffel symbols,
Hessian and curvature tensor associated with $g$. With the usual notational conventions
of Riemannian geometry (as in \cite{docarmo}), in components we have
$$
\left\{
\begin{array}{rcl}
\G_{ij}^k & = & \frac12 \Bigl( \partial_i g_{jm} + \partial_j g_{im} - \partial_m g_{ij}\Bigr) g^{mk}\\
(\H f)_{ij} & = & \partial_{ij} f - \G_{ij}^k \partial_k f,
\end{array}
\right.
$$
where $(g^{k\ell})$ stands for the inverse of $(g_{k\ell})$, and we use Einstein's convention of
summation over repeated indices. We shall use a superscript $h$ to denote the same objects when they are associated with the
metric $h$.  As usual $\delta_{ij}=\delta^{ij}=\delta_i^j$ will be 1 if
$i=j$, and 0 otherwise. The Christoffel symbols are modified as follows by a conformal
change of metrics: if $h = e^{2f}g$ then (see for example \cite{kn:Kulkarni})
$$
(\G^h)_{ij}^k = \G_{ij}^k + \bigl( \partial_i f \delta_j^k + \partial_j f \delta_i^k - \partial_m f g_{ij} g^{mk}\bigr).
$$
Thus, since $f= \ln (1+\sigma^{\rho,u})$ and its derivatives $\partial_0 f, \partial_1 f, \ldots, \partial_{n-1} f$ vanish along $\gamma ([0,\tau])$ (by Lemma \ref{LEMlocalpert} (2)-(3)), the Christoffel symbols of $h$ and $g$ coincide along $\gamma$. Then the family of tangent vectors $e_0(t)=\dot{\gamma}(t), e_{1}(t), \ldots, e_{n-1}(t)$ is still a family which is orthonormal and parallel along $\gamma  ([0,\tau])$. Moreover, if $h = e^{2f}g$ then the curvature tensor $\Rm^h, \Rm$ respectively of $h$ and $g$ satisfy 
$$
e^{-2f}\, \left\langle \Rm^h (u,v)\, v,w\right\rangle^h \\
=  \left\langle \Rm (u,v)\, v,w\right\rangle - \H f (u,w),
$$
at any $p\in M$ where $\nabla f$ vanishes and any tangent vectors $u, v, w \in T_pM$ such that  $u,w \perp v$ and $Hf(v,\cdot)=0$. By Lemma \ref{LEMlocalpert} (2)-(5), we infer that along $\gamma ([0,\tau])$, we have for every $i,j \in \{1, \ldots, n-1\}$ and every $t\in [0,\tau]$,
\begin{eqnarray}\label{Rij}
R^h_{ij}(t) & := & \left\langle \Rm^h_{\gamma(t)} \left(e_i(t), \dot{\gamma}(t)\right) \dot{\gamma}(t),e_j(t) \right\rangle_{\gamma(t)}^h\\
& = & \left\langle \Rm_{\gamma(t)} \left( e_i(t), \dot{\gamma}(t)\right) \dot{\gamma}(t),e_j(t) \right\rangle_{\gamma(t)} - u_{ij}(t) \nonumber \\
& = & R_{ij}(t) - u_{ij}(t)\nonumber,
\end{eqnarray}
with 
\begin{eqnarray}\label{5mars1}
R_{ij}(t)  :=  \left\langle \Rm_{\gamma(t)} \left(e_i(t), \dot{\gamma}(t)\right) \dot{\gamma}(t),e_j(t) \right\rangle_{\gamma(t)}.
\end{eqnarray}
By the above discussion,  $\gamma$ is still a geodesic with respect to $h$ and by construction (Lemma \ref{LEMlocalpert} (1)) the support of $\sigma^{\rho,u}$ is contained in a cylinder of radius $\rho$, so properties (1) and (2) above are satisfied. it remains to study the effect of $\sigma^{\rho,u}$ on the symplectic mapping $P_h(\gamma)(\tau)$. By the Jacobi equation, we have
$$
P_h(\gamma)(\tau)(J(0),\dot{J}(0))=(J(\tau),\dot{J}(\tau)),
$$
where $J: [0,\tau] \rightarrow \R^m$ is solution to the Jacobi equation 
$$
\ddot{J}(t) + R^h(t) J(t) =0 \qquad \forall t \in [0,\tau],
$$
where $R^h(t)$ is the $m\times m$ symmetric matrix whose coefficients are given by (\ref{Rij}). In other terms, $P_h(\gamma)(\tau)$ is equal to the $2m\times 2m$ symplectic matrix $X(\tau)$ given by the solution $X:[0,\tau] \rightarrow \mbox{Sp}(m)$ at time $\tau$ of the following Cauchy problem (compare \cite[Sect. 3.2]{rr12} and \cite{lazrag14}):
\begin{eqnarray}\label{syscontrol2}
\dot{X}(t) = A(t) X(t) + \sum_{i\leq j=1}^m u_{ij}(t) \mathcal{E}(ij) X(t) \quad \forall t\in [0,\tau], \quad X(0)=I_{2m},
\end{eqnarray}
where the $2m \times 2m$ matrices $A(t), \mathcal{E}(ij) $ are defined by ($R(t)$ is the $m\times m$ symmetric matrix whose coefficients are given by (\ref{5mars1}))
$$
A(t) :=  \left( \begin{matrix} 0 & I_m \\ -R(t) & 0 \end{matrix} \right) \qquad \forall t \in [0,\tau]
$$
and
$$
\mathcal{E}(ij) :=   \left( \begin{matrix} 0 & 0 \\ E(ij) & 0 \end{matrix} \right),
$$
where the $E(ij), 1 \leq i\leq j\leq m$ are the symmetric $m \times m$ matrices defined by
$$
\mbox{and} \quad \left(E(ij)\right)_{k,l} =   \delta_{ik} \delta_{jl} + \delta_{il} \delta_{jk}  \qquad \forall i, j =1, \ldots,m.
$$
Since our control system has the form (\ref{syscontrol}), all the results gathered in Section \ref{prelcontrol} apply. So, Theorem \ref{THMmain} will follow from Proposition \ref{LIEPROP3bis}. First by compactness of $M$ and regularity of the geodesic flow, the compactness assumptions in Proposition \ref{LIEPROP3bis} are satisfied. It remains to check that assumptions (\ref{ASSBBbis}), (\ref{conditionLIEPROP4bis}) and (\ref{conditionLIEPROP3bis}) hold.

First we check immediately that 
$$
\mathcal{E}(ij) \mathcal{E}(kl) =0 \qquad \forall  i,j,k, l \in \{1, \ldots,m\} \mbox{ with } i\leq j, \, k\leq l.
$$
So, assumption (\ref{ASSBBbis}) is satisfied. Since the $\mathcal{E}(ij)$ do not depend on time, we check easily that the matrices $B_{ij}^0, B_{ij}^1, B_{ij}^2$ associated to our system are given by (remember that we use the notation $[B,B'] := BB'-B'B$)
$$
\left\{
\begin{array}{l}
B_{ij}^0 (t) = B_{ij} := \mathcal{E}(ij) \\
B_{ij}^1(t) = \left[ \mathcal{E}(ij), A(t)\right] \\
B_{ij}^2(t) =   \left[ \left[ \mathcal{E}(ij), A(t)\right], A(t) \right],
\end{array}
\right.
$$
for every $t \in [0,\tau]$ and any $i, j =1, \ldots,m$ with $i\leq j$. An easy computation yields for any $i, j =1, \ldots,m$ with $i\leq j$ and any $t\in [0,\tau]$,
$$
B_{ij}^1(t) = \left[\mathcal{E}(ij), A (t)\right] = \left( \begin{matrix} -E(ij) & 0 \\ 0 & E(ij) \end{matrix} \right)
$$
and
$$
B_{ij}^2(t) =  \left[ \left[ \mathcal{E}(ij), A(t) \right], A(t) \right] =  \left( \begin{matrix} 0 & -2E(ij) \\ -E(ij)R(t)-R(t) E(ij) & 0 \end{matrix} \right).
$$
Then we get for any $i, j =1, \ldots,m$ with $i\leq j$,
$$
\left[B_{ij}^1(0),B_{ij}\right]=2 \left( \begin{matrix} 0 & 0 \\ (E(ij))^2 & 0 \end{matrix} \right) \in \mbox{Span} \Bigr\{B_{rs}^0(0) \, \vert \,r\leq s \Bigr\}
$$
and
$$
\left[B_{ij}^2(0),B_{ij}\right]=2 \left( \begin{matrix} -(E(ij))^2 & 0 \\ 0 & (E(ij))^2 \end{matrix} \right) \in \mbox{Span} \Bigr\{B_{rs}^1(0) \, \vert \, r \leq s \Bigr\}.
$$
So assumption (\ref{conditionLIEPROP4bis}) is satisfied. It remains to show that (\ref{conditionLIEPROP3bis}) holds. We first notice that  for any $i, j, k,l =1, \ldots,m$ with $i\leq j, k\leq l$, we have
\begin{eqnarray*}
 \left[ B_{ij}^1(0), B_{kl}^1(0)\right] & = & \Bigl[ \left[ \mathcal{E}(ij), A(0)\right], \left[ \mathcal{E}(kl), A(0)\right] \Bigr] \\
& = & \left( \begin{matrix} \left[E(ij),E(kl)\right] & 0 \\ 0 &  \left[E(ij),E(kl)\right] \end{matrix} \right),
\end{eqnarray*}
with 
\begin{equation}\label{2702}
 \left[E(ij),E(kl)\right]  = \delta_{il} F(jk) + \delta_{jk} F(il) + \delta_{ik} F(jl) + \delta_{jl} F(ik),
\end{equation}
where $F(pq)$ is the $m\times m$ skew-symmetric matrix defined by
$$
\left( F(pq)\right)_{rs} := \delta_{rp}\delta_{sq} - \delta_{rq}\delta_{sp}.
$$
It is sufficient to show that the space $S \subset M_{2m}(\R)$ given by
$$
S:=\mbox{Span}  \Bigl\{B_{ij}^0(0),B_{ij}^1(0),B_{ij}^2(0),[B_{kl}^1(0),B_{rr'}^1(0)] \, \vert \, i,j,k,l,r,r' \Bigr\} \subset  T_{I_{2m}}\mbox{Sp}(m)
$$ 
has dimension $p:=2m(2m+1)/2$. First since the set matrices $ \mathcal{E}(ij)$ with $i, j=1, \ldots,m$ and $i\leq j$ forms a basis of the vector space of $m\times m$ symmetric matrices $\mathcal{S}(m)$ we check easily by the above formulas that the vector space 
$$
S_1 :=\mbox{Span}    \Bigl\{   B_{ij}^0, B_{ij}^2 (0)  \, \vert \, i, j\Bigr\}  =       \mbox{Span} \Bigl\{ \mathcal{E}(ij), \left[ \left[ \mathcal{E}(ij), A(t) \right], A(t) \right] \, \vert \, i, j\Bigr\}
$$
has dimension $2 (m(m+1)/2)=m(m+1)$. We check easily that the vector spaces 
$$
S_2 := \mbox{Span}    \Bigl\{  B_{ij}^1 (0)  \, \vert \, i, j\Bigr\}  = \mbox{Span}   \Bigl\{  \left[ \mathcal{E}(ij), A(0)\right] \, \vert \, i,j \Bigr\}
$$
and
\begin{multline*}
S_3 :=    \mbox{Span}    \Bigl\{      \left[ B_{ij}^1(0), B_{kl}^1(0)\right]      \, \vert \, i, j,k,l \Bigr\} =  \\   \mbox{Span}    \Bigl\{  \Bigl[ \left[ \mathcal{E}(ij), A(0)\right], \left[ \mathcal{E}(kl), A(0)\right] \Bigr]   \, \vert \, i,j, k, l\Bigr\}
\end{multline*}
are orthogonal to $S_1$ with respect to the scalar product $P \cdot Q=\mbox{tr} (P^*Q)$. So, we need to show that $S_2 + S_3$ has dimension  $p-m(m+1)=m^2$. By the above formulas, we have 
$$
S_2 := \mbox{Span}    \left\{  \left( \begin{matrix} -E(ij) & 0 \\ 0 & E(ij) \end{matrix} \right)  \, \vert \, i, j\right\} 
$$
and
$$
S_3 :=    \mbox{Span}    \left\{    \left( \begin{matrix} \left[E(ij),E(kl)\right] & 0 \\ 0 &  \left[E(ij),E(kl)\right] \end{matrix} \right)  \, \vert \, i,j, k, l\right\},
$$
and in addition $S_2$ and $S_3$ are orthogonal. The first space $S_2$ has the same dimension as $\mathcal{S}(m)$, that is $m(m+1)/2$. Moreover, by  (\ref{2702}) for every $i\neq j$, $k=i,$ and $l\notin \{i,j\}$, we have 
$$
 \left[E(ij),E(kl)\right]  = F(jl).
$$
The space spanned  by the matrices of the form 
$$
 \left( \begin{matrix} F(jl) & 0 \\ 0 &  F(jl) \end{matrix} \right),
$$
with $1 \leq j < l \leq m$ has dimension $m(m-1)/2$. This shows that $S_3$ has dimension at least $m(m-1)/2$ and so $S_2\oplus S_3$ has dimension $m^2$. This concludes the proof of Theorem \ref{THMmain}.

\section{Proofs of Theorems \ref{THM1} and \ref{THM2}}\label{proofTHM1THM2}

Let us start with the proof of Theorem \ref{THM1}, namely, if the periodic orbits of the geodesic flow of a smooth compact manifold $(M,g)$ of dimension $\geq 2$ are $C^{2}$-persistently hyperbolic from Ma\~{n}\'{e}'s viewpoint then the closure of the set of periodic orbits is a hyperbolic set. Recall that an invariant set $\Lambda$ of a smooth flow $\psi_{t}: Q \longrightarrow Q$ acting
without singularities on a complete manifold $Q$ is called hyperbolic if there exist constants, $C>0$, $\lambda \in (0,1)$, and
a direct sum decomposition $T_{p}Q= E^{s}(p)\oplus E^{u}(p) \oplus
X(p)$ for every $p \in \Lambda $, where $X(p)$ is the subspace tangent to
the orbits of $\psi_{t}$, such that
\begin{enumerate}
\item $\parallel D\psi_{t}(W)\parallel \leq C\lambda^{t} \parallel W\parallel$ for every $W \in E^{s}(p)$ and $ t \geq 0$,
\item $\parallel D\psi_{t}(W)\parallel \leq C\lambda^{-t} \parallel$ for every $W \in E^{u}(p)$ and $ t \leq 0$.
\end{enumerate}
In particular, when the set $\Lambda$ is the whole $Q$ the flow is called Anosov. The proof follows the same steps of the proof of Theorem B in \cite{ruggiero91} where
the same conclusion is obtained supposing that the geodesic flow is $C^{1}$ persistently expansive in the family of Hamiltonian flows.

\subsection{Dominated splittings and hyperbolicity}

Let $F^{2}(M,g)$ be the set of Riemannian metrics in $M$ conformal to $(M,g)$ endowed with the $C^{2}$ topology such that all closed orbits of their
geodesic flows are hyperbolic.

The first step of the proof of Theorem \ref{THM1} is closely related with the notion of dominated splitting introduced by Ma\~{n}\'{e}.

\begin{definition}
Let $\phi_{t}: Q \longrightarrow  Q$ be a smooth non-singular flow acting on a
complete Riemannian manifold $Q$ and let $\Omega \subset Q$ be and invariant set. We say that
$\Omega$ has a dominated splitting in $\Omega$ if there exist constants $\delta \in (0,1)$, $m>0$, and invariant subspaces
$S(\theta), U(\theta)$ in $T_{\theta}\Omega$ such that for every $\theta \in \Omega$,
\begin{enumerate}
\item If $X(\theta)$ is the unit vector tangent to the flow then $S(\theta)\oplus U(\theta) \oplus X(\theta) = T_{\theta}Q$,
\item $\parallel D_{\theta}\phi_{m}\vert_{S(\theta)} \parallel \cdot \parallel D_{\phi_{m}(\theta)}\phi_{-m}\vert_{U(\phi_{m}(\theta)} \parallel \leq \delta.$
\end{enumerate}
\end{definition}

The invariant splitting of an Anosov flow is always dominated, but the converse may not be true in general. However, for geodesic
flows the following statement holds

\begin{theorem} \label{domination}
Any continuous, Lagrangian, invariant, dominated splitting in a compact invariant set for the geodesic flow of a smooth compact Riemannian manifold is a hyperbolic splitting.
Therefore, the existence of a continuous Lagrangian invariant dominated splitting in the whole unit tangent bundle is equivalent to the Anosov property in the family of
geodesic flows.
\end{theorem}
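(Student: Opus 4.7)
The plan is to use the symplectic structure of the normal bundle together with the Lagrangian hypothesis to promote the one-sided domination inequality into a two-sided exponential estimate. Let $\omega$ denote the symplectic form induced on the normal bundle $N\subset T(T_1M)$; the geodesic flow $\phi_t$ preserves $\omega$ and, by hypothesis, the decomposition $N(\theta)=S(\theta)\oplus U(\theta)$ into two complementary Lagrangian summands of dimension $m:=n-1$. Since each summand is Lagrangian and transverse to the other, $\omega$ restricts to a nondegenerate pairing $S(\theta)\times U(\theta)\to \R$. Let $A_t(\theta):=D_\theta\phi_t|_{S(\theta)}$ and $B_t(\theta):=D_\theta\phi_t|_{U(\theta)}$; the invariance $\omega(A_ts,B_tu)=\omega(s,u)$ means $A_t$ and $B_t$ are contragredient through this pairing.

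The first step is to derive, uniformly on the compact invariant set $\Lambda$, the comparison
$$
K^{-2}\,\|A_t(\theta)\|\;\leq\;\|B_t(\theta)^{-1}\|\;\leq\;K^2\,\|A_t(\theta)\| \qquad \forall\, t\in\R,\;\theta\in\Lambda,
$$
for a constant $K\geq 1$ depending only on $g$ and $\Lambda$. Indeed, define $\iota_\theta:U(\theta)\to S(\theta)^*$ by $\iota_\theta(u):=\omega(\cdot,u)$; continuity of the splitting and compactness of $\Lambda$ guarantee that $\|\iota_\theta\|$ and $\|\iota_\theta^{-1}\|$ (with respect to the Sasaki metric) are bounded by some such $K$. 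The invariance relation rewrites as $\iota_{\phi_t\theta}\circ B_t=(A_t^*)^{-1}\circ \iota_\theta$; solving for $B_t^{-1}$ gives $B_t^{-1}=\iota_\theta^{-1}A_t^*\iota_{\phi_t\theta}$, hence $\|B_t^{-1}\|\leq K^2\|A_t\|$, and taking adjoints in $A_t^*=\iota_\theta B_t^{-1}\iota_{\phi_t\theta}^{-1}$ yields $\|A_t\|\leq K^2\|B_t^{-1}\|$.

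The second step is to iterate domination. For every $n\geq 1$ and $\theta\in\Lambda$, chaining the domination inequality along the orbit gives $\|A_{nm}(\theta)\|\cdot\|B_{nm}(\theta)^{-1}\|\leq\delta^n$. Combined with the comparison above,
$$
K^{-2}\|A_{nm}(\theta)\|^2\;\leq\;\|A_{nm}(\theta)\|\cdot\|B_{nm}(\theta)^{-1}\|\;\leq\;\delta^n,
$$
so $\|A_{nm}(\theta)\|\leq K\delta^{n/2}$. Setting $\lambda:=\delta^{1/(2m)}\in(0,1)$ and interpolating for $t\in[nm,(n+1)m]$ using the uniform bound on $\|D\phi_s\|$ for $|s|\leq m$ over the compact manifold $T_1M$, one obtains $\|D\phi_t|_{S(\theta)}\|\leq C\lambda^t$ for all $t\geq 0$, $\theta\in\Lambda$. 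Symmetrically, $\|D\phi_{-t}|_{U(\theta)}\|\leq C\lambda^t$ for $t\geq 0$, which is exactly hyperbolicity of the splitting.

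For the final equivalence, one direction is immediate: applied with $\Lambda=T_1M$, the first part shows that a continuous Lagrangian invariant dominated splitting on all of $T_1M$ is hyperbolic, so $\phi_t$ is Anosov. Conversely, an Anosov geodesic flow carries a continuous invariant hyperbolic (hence dominated) splitting $E^s\oplus E^u\oplus\langle X\rangle$; its Lagrangianity follows from the fact that the stable and unstable subspaces of a symplectic map are always $\omega$-isotropic, combined with the dimension count $\dim E^s=\dim E^u=m$. The crux of the argument is the first step: turning symplectic/Lagrangian duality into the uniform comparability $\|A_t\|\asymp \|B_t^{-1}\|$. Once this comparison is in hand, the hyperbolic estimates reduce to formal iteration of the hypothesized domination.
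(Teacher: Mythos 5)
The paper does not prove this theorem; it cites Ruggiero \cite{ruggiero91} and Contreras \cite{kn:Contreras} for it, so there is no in-paper argument to compare against. Your proof is correct, and the route you take---exhibiting $D\phi_t|_U$ as the contragredient of $D\phi_t|_S$ through the symplectic pairing of the two Lagrangian summands, so that the dominated-splitting inequality, which compares $S$ against $U$, self-improves to an absolute exponential contraction on $S$---is precisely the mechanism used in the cited sources. Two points should be spelled out in a complete write-up. First, the nondegeneracy of $\omega$ on $S(\theta)\times U(\theta)$ (i.e.\ that $\iota_\theta$ is an isomorphism) is exactly where the Lagrangian hypothesis enters: if $u\in U(\theta)$ satisfies $\omega(s,u)=0$ for every $s\in S(\theta)$, then $u$ lies in the symplectic annihilator of $S(\theta)$, which equals $S(\theta)$ because $S(\theta)$ is Lagrangian, so $u\in S(\theta)\cap U(\theta)=\{0\}$. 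Second, the uniform bound on $\|\iota_\theta\|$ and $\|\iota_\theta^{-1}\|$ is exactly the statement that the angle between $S(\theta)$ and $U(\theta)$ is bounded away from zero on $\Lambda$; you correctly derive it from continuity of the splitting plus compactness of $\Lambda$, and it is worth flagging that this is the one place where compactness of the invariant set is genuinely used. With these filled in---and noting that in the converse direction the dimension count $\dim E^s=\dim E^u=m$ itself follows from both subspaces being $\omega$-isotropic in a $2m$-dimensional symplectic space---the argument is complete.
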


This statement is proved in \cite{ruggiero91} not only for geodesic flows but for symplectic diffeomorphisms. Actually, the statement
extends easily to a Hamiltonian flow in a nonsingular energy level (see also Contreras \cite{kn:Contreras}).

The following step of the proof of Theorem \ref{THM1} relies on the connection between persistent hyperbolicity of periodic orbits and 
the existence of invariant dominated splittings. One of the most remarkable facts about Ma\~{n}\'{e}'s work 
about the stability conjecture (see Proposition II.1 in \cite{kn:Manest}) is to show that persistent hyperbolicity of families of linear maps is connected to dominated splittings, 
the proof is pure generic linear algebra (see Lemma II.3 in \cite{kn:Manest}). Then Ma\~{n}\'{e} observes that Franks' Lemma allows to reduce the study of persistently 
hyperbolic families of periodic orbits of diffeomorphisms to persistently hyperbolic families of linear maps. Let us explain briefly  Ma\~{n}\'{e}'s 
result and see how its combination with Franks' Lemma for geodesic flows implies Theorem \ref{THM1}. 

Let $GL(n)$ be the group of linear isomorphisms of $\mathbb{R}^{n}$. Let $\psi : \mathbb{Z} \longrightarrow GL(n)$ be a sequence of such isomorphisms,  we denote by $E^{s}_{j}(\psi) $ the set of vectors $v \in \mathbb{R}^{n}$ such that 
$$
\sup_{n \geq 0} \Bigl\{ \bigl\| \left(\Pi_{i=0}^{n} \psi_{j+i}\right) v \bigr\|  \Bigr\} <\infty,
$$ 
and by $E^{u}_{j}(\psi) $ the set of vectors $v \in \mathbb{R}^{n}$ such that 
$$
\sup_{n \geq 0} \Bigl\{ \bigl\| \left(\Pi_{i=0}^{n} \psi_{j-1-i} \right)^{-1} v \bigr\|  \Bigr\} <\infty. 
$$ 
Let us say that the sequence $\psi$ is hyperbolic if $E^{s}_{j}(\psi)  \bigoplus E^{u}_{j}(\psi) = \mathbb{R}^{n}$ for every $j \in \mathbb{Z}$. 
Actually, this definition is equivalent to require the above direct sum decomposition for some $j$. A periodic sequence $\psi$ is characterized by 
the existence of $n_{0}>0$ such that $\psi_{j+n_{0}} = \psi_{j}$ for every $j$. It is easy to check that the hyperbolicity of a periodic sequence $\psi$ 
is equivalent to the classical hyperbolicity of the linear map $\prod_{j=0}^{n_{0}-1} \psi_{j}$. Now, let 
$$
\Bigl\{\psi^{\alpha}, \alpha \mbox{ } \in \mbox{ }\Lambda \Bigr\}
$$ 
be a family of periodic sequences of linear maps indexed in a set $\Lambda$. Let us define the distance $d(\psi, \eta)$ between two families of 
periodic sequences indexed in $\Lambda$ by 
$$ 
d(\psi, \eta) = \sup_{n \in \mathbb{Z}, \alpha \in \Lambda} \Bigl\{ \parallel \psi_{n}^{\alpha} - \eta_{n}^{\alpha} \parallel \Bigr\} .
$$ 
We say that the family $\{\psi^{\alpha}, \alpha \mbox{ } \in \mbox{ }\Lambda \}$ is hyperbolic if every sequence in the family is hyperbolic. 
Let us call by periodically equivalent two families $\psi^{\alpha}$, $\eta^{\alpha}$ for which given any $\alpha$, the minimum periods of $\psi^{\alpha}$ 
and $\eta^{\alpha}$ coincide. Following Ma\~{n}\'{e}, we say that the family $\{\psi^{\alpha}, \alpha \mbox{ } \in \mbox{ }\Lambda \}$ is uniformly hyperbolic 
if there exists $\epsilon >0$ such that every periodically equivalent family $\eta^{\alpha}$ such that $d(\psi, \eta) < \epsilon$ is also hyperbolic. 
The main result concerning uniformly hyperbolic families of linear maps is the following symplectic version of Lemma II.3 in \cite{kn:Manest}. 

\begin{theorem} \label{persistentlinearhyp}
Let $\{\psi^{\alpha}, \alpha \mbox{ } \in \mbox{ }\Lambda \}$ be a uniformly hyperbolic family of periodic linear sequences of 
symplectic isomorphisms of $\mathbb{R}^{n}$. Then there exist constants $K>0$, $m \in \mathbb{N}$, and $\lambda \in (0,1)$ such that : 
\begin{enumerate}
\item If $\alpha \in \Lambda$ and $\psi^{\alpha}$ has minimum period $n \geq m$, then 
$$ \prod_{j=0}^{k-1} \bigl\| (\Pi_{i=0}^{m-1} \psi_{mj+i}^{\alpha})\vert_{E^{s}_{mj}(\psi^{\alpha})} \bigr\| \leq K\lambda^{k} , $$ 
and 
$$ \prod_{j=0}^{k-1} \bigl\| (\Pi_{i=0}^{m-1} \psi_{mj+i}^{\alpha})^{-1}\vert_{E^{u}_{mj}(\psi^{\alpha})} \bigr\| \leq K\lambda^{k} , $$ 
where $ k $ is the integer part of $\frac{n}{m}$. 
\item For all $\alpha \in \Lambda$, $j \in \mathbb{Z}$, 
$$ 
\bigl\| (\Pi_{i=0}^{m-1} \psi_{j+i}^{\alpha})\vert_{E^{s}_{j}(\psi^{\alpha})} \bigr\| \cdot \bigl\| (\Pi_{i=0}^{m-1} \psi_{j+i}^{\alpha})^{-1}\vert_{E^{u}_{j}(\psi^{\alpha})} \bigr\| \leq \lambda. $$
\item For every $\alpha \in \Lambda$ 
$$ \limsup_{n \rightarrow +\infty} \frac{1}{n}\sum_{j=0}^{n-1} \ln\left( \bigl\| (\Pi_{i=0}^{m-1} \psi_{mj+i}^{\alpha})\vert_{E^{s}_{mj}(\psi^{\alpha}) }\bigr\| \right) < 0$$ 
and 
$$ \limsup_{n \rightarrow +\infty} \frac{1}{n}\sum_{j=0}^{n-1} \ln \left(\bigl\| (\Pi_{i=0}^{m-1} \psi_{mj+i}^{\alpha})^{-1}\vert_{E^{u}_{m(j+1)}(\psi^{\alpha})} \bigr\| \right) < 0. $$ 
\end{enumerate}
\end{theorem}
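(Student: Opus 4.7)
The proof proceeds by contradiction, following the template of Mañé's Lemma II.3 from his stability conjecture paper, adapted to the symplectic setting. The guiding idea is that if any of the three conclusions fails uniformly over $\Lambda$, then one can construct an arbitrarily $C^0$-small symplectic perturbation of some periodic sequence $\psi^\alpha$ in the family which destroys its hyperbolicity, contradicting the uniform hyperbolicity hypothesis.

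First I would establish conclusion (2), the dominated splitting. Assume toward a contradiction that for every $m \in \N$ and every $\lambda \in (0,1)$ one can find $\alpha \in \Lambda$, $j \in \Z$, and unit vectors $v_s \in E^s_j(\psi^\alpha)$, $v_u \in E^u_{j+m}(\psi^\alpha)$ with
\[
\bigl\| \Phi^m_j v_s \bigr\| \cdot \bigl\| (\Phi^m_j)^{-1} v_u \bigr\|  > \lambda,
\]
where $\Phi^m_j := \psi^\alpha_{j+m-1} \cdots \psi^\alpha_j$.  Because each $\psi^\alpha_i$ is symplectic, the invariant splitting of any hyperbolic periodic sequence is automatically Lagrangian, and the symplectic form identifies $E^u_j$ with the dual of $E^s_j$. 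This lets me choose a symplectic basis adapted to $E^s_j \oplus E^u_j$ so that any symmetric $n \times n$ matrix $S$ produces a symplectic shear of the form
\[
R_S := \begin{pmatrix} I & 0 \\ S & I \end{pmatrix},
\]
which is $C^0$-close to $I_{2n}$ when $\|S\|$ is small and which tilts $E^s_j$ into $E^s_j \oplus E^u_j$. Replacing $\psi^\alpha_j$ by $R_S\psi^\alpha_j$ (again symplectic) and choosing $S$ of norm comparable to the failure of the dominated splitting estimate, one produces a periodically equivalent family $\eta$, with $d(\psi,\eta)$ arbitrarily small, such that an appropriate modification of $v_s$ becomes a nonzero vector whose forward and backward orbits under $\eta^\alpha$ are both bounded, contradicting the hyperbolicity of $\eta^\alpha$.

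Conclusion (1) is then deduced from (2) by a Pliss-type selection argument. The uniform domination from (2) forces the block products $\Phi^m_j|_{E^s_j}$ along each orbit to cooperate geometrically with the inverse unstable norms, and iterating through the $k = \lfloor n/m \rfloor$ blocks of length $m$ inside a full period yields via sub-multiplicativity the claimed bound $K \lambda^k$. The uniform upper bound on $\|\psi^\alpha_i\|^{\pm 1}$ required to control the $k$-th ``tail" factor is itself a consequence of uniform hyperbolicity: an unbounded norm would again allow an arbitrarily small symplectic perturbation breaking hyperbolicity. Conclusion (3) follows at once from (1): taking logarithms, dividing by $n$, and letting $n\to\infty$ gives a limit superior bounded by $(\log\lambda)/m < 0$ uniformly in $\alpha$.

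The main obstacle is step (2), namely carrying out the symplectic perturbation. In Mañé's non-symplectic setting any small rotation of $\R^{n}$ in a two-plane containing $v_s$ and a chosen $v_u$ does the job, but here the perturbation must lie in the symplectic group, which rigidly constrains its form. The Lagrangian character of $E^s_j$ and $E^u_j$, together with the non-degenerate symplectic pairing between them, is precisely what makes the symmetric shear $R_S$ available and reduces the construction to finding a symmetric matrix $S$ of prescribed small operator norm that realizes a given stable-to-unstable tilt. Distributing this shear as a product of yet smaller shears inserted at several indices along the period (so that no single $\eta^\alpha_i$ differs too much from $\psi^\alpha_i$), and verifying that the new sequence remains periodically equivalent of the same minimal period, is the most delicate bookkeeping step.
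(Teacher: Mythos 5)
Your proposal correctly identifies the key symplectic tool --- that any symmetric $n\times n$ matrix $S$ gives a symplectic shear $\left(\begin{smallmatrix} I & 0 \\ S & I \end{smallmatrix}\right)$ (or its transpose), so one can play Ma\~n\'e's perturbation game inside $\mathrm{Sp}(n,\R)$ --- and this is indeed exactly what makes the argument go through in the symplectic category (the paper's Lemmas \ref{sympdec1}--\ref{sympdec2}). However, the logical architecture you propose reverses Ma\~n\'e's, and I do not think the reversed order can be carried out as sketched.

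The paper (following Lemma II.3 of Ma\~n\'e) proceeds in the order: first establish uniform contraction on the stable part, i.e.\ conclusions (1) and (3), by the ``uniformly contracting families'' analysis; then prove the lower bound on the angle $\measuredangle(E^s_0,E^u_0)$ (Lemma \ref{angle}); then deduce domination, i.e.\ conclusion (2). Crucially, the proof of the angle lemma opens with ``By the uniform contraction property of the stable part of the dynamics of the family there exist $K>0$, $\lambda\in(0,1)$ such that $\|A^{-1}\|\le K\lambda^n$ and $\|B\|\le K\lambda^n$'' --- the contraction estimates feed the Neumann-series bound $\|L\| \le \|PA^{-1}\| + K^2\lambda^{2n}\|L\|$ on the tilt matrix $L$, and the domination step in turn uses the angle lower bound. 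Your proposal starts from (2) by a contradiction-perturbation and then extracts (1) by a Pliss-type argument. The perturbation step is under-determined: to replace $\psi^\alpha_j$ by $R_S\psi^\alpha_j$ with $\|S\|$ small enough to keep $d(\psi,\eta)<\epsilon$ while creating a bounded orbit, you must already know that $\measuredangle(E^s_j,E^u_j)$ is uniformly bounded below (otherwise the required tilt $S$ can have arbitrarily large norm) and that the restricted norms $\|\Phi^m_j|_{E^s}\|$, $\|(\Phi^m_j)^{-1}|_{E^u}\|$ are uniformly bounded --- which are precisely the facts that the paper proves first and that you have not established at that stage. Moreover, a single failure of the domination estimate at one block of length $m$ is perfectly compatible with hyperbolicity; Ma\~n\'e needs the failure to persist along the orbit and he reduces to the angle lemma rather than directly constructing a bounded orbit. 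Finally, the Pliss passage from (2) to (1) is not a formality: domination controls the \emph{ratio} of stable and unstable rates, not the absolute stable contraction, and you have not explained how the symplectic duality (restriction to $E^u$ being essentially the inverse transpose of the restriction to $E^s$, per Lemma \ref{sympdec1}) enters to turn the ratio bound into the absolute bound $\le K\lambda^k$. So there is a genuine gap: the proof should follow the paper's order (1)(3) $\Rightarrow$ angle bound $\Rightarrow$ (2), not the reverse.
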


At the end of the section we shall give an outline of the proof of Theorem \ref{persistentlinearhyp} based on Ma\~{n}\'{e}'s Lemma II.3 in \cite{kn:Manest} 
which is proved for linear isomorphisms without the symplectic assumption. 

Now, we are ready to combine Franks' Lemma from Ma\~{n}\'{e}'s viewpoint and Theorem \ref{persistentlinearhyp} to get a geodesic flow version of 
Theorem \ref{persistentlinearhyp}.  

\begin{lemma} \label{minimumperiod}
Let $(M,g)$ be a compact Riemannian manifold. Then there exists $T_{g}>0$ such that every closed geodesic has period greater than $T_{g}$.
\end{lemma}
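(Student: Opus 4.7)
The plan is to exploit the positivity of the injectivity radius of a compact Riemannian manifold. Since $M$ is compact, the injectivity radius
$$
r_g := \inf_{p \in M} \Bigl\{ r > 0 \, \vert \, \exp_p \text{ is a diffeomorphism on } B_r(0) \subset T_pM \Bigr\}
$$
is strictly positive. I will show that any closed geodesic must have period at least $2 r_g$, so that the lemma holds with $T_g := r_g$ (or any value strictly below $2r_g$).

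Suppose, for contradiction, that $\gamma : [0,T] \to M$ is a unit-speed closed geodesic with $\gamma(0) = \gamma(T)$, $\dot{\gamma}(0) = \dot{\gamma}(T)$, and $T < 2 r_g$. Consider the midpoint $\gamma(T/2)$. Both
$$
s \in [0,T/2] \, \longmapsto \, \gamma(s) \qquad \text{and} \qquad s \in [0,T/2] \, \longmapsto \, \gamma(T-s)
$$
are unit-speed geodesics of length $T/2 < r_g$ joining $\gamma(0)$ to $\gamma(T/2)$. Their initial velocities are $\dot{\gamma}(0)$ and $-\dot{\gamma}(T) = -\dot{\gamma}(0)$ respectively, which are distinct since $|\dot{\gamma}(0)|_g = 1 \neq 0$. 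Hence $\exp_{\gamma(0)}$ would send two distinct vectors in the ball of radius $r_g$ to the same point $\gamma(T/2)$, contradicting the injectivity of $\exp_{\gamma(0)}$ on $B_{r_g}(0) \subset T_{\gamma(0)}M$. Therefore $T \geq 2 r_g$, and taking $T_g := r_g > 0$ concludes the proof.

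The argument is essentially standard and there is no serious obstacle. The only subtlety to be careful about is ensuring that the two candidate geodesics really are distinct at the level of initial velocities (which uses $\dot\gamma(T)=\dot\gamma(0)$ from the closedness hypothesis), and choosing $T_g$ strictly smaller than $2r_g$ so that the conclusion delivers \emph{strict} inequality as stated.
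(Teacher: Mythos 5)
Your proof is correct. You take a genuinely different route from the paper, which simply invokes the flowbox lemma: the geodesic flow is a non-singular smooth flow on the compact manifold $T_1M$, and for any such flow there is a uniform lower bound on the periods of closed orbits. That argument is purely dynamical and applies to any non-singular flow on a compact manifold, but it is non-constructive. Your argument instead uses the Riemannian structure directly through the injectivity radius $r_g>0$, yielding the explicit and sharp-looking bound $T\geq 2r_g$: if a unit-speed closed geodesic had period $T<2r_g$, the two arcs $s\mapsto\gamma(s)$ and $s\mapsto\gamma(T-s)$, $s\in[0,T/2]$, would be geodesics of length $T/2<r_g$ from $\gamma(0)$ to $\gamma(T/2)$ with distinct initial velocities $\pm\dot\gamma(0)$, contradicting injectivity of $\exp_{\gamma(0)}$ on $B_{r_g}(0)$. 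Both proofs are valid; the paper's is shorter and more general, while yours is self-contained, quantitative, and, pleasantly, uses exactly the quantity $r_g$ that the paper itself introduces immediately afterward in the proof of Lemma~\ref{tauT}. The one small point worth making explicit, which you do handle, is that the conclusion $T\geq 2r_g$ forces the choice $T_g<2r_g$ (e.g.\ $T_g=r_g$) to get the strict inequality asked for in the statement.
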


The proof is more or less obvious from the flowbox lemma since the geodesic flow has no singularitites and the unit tangent bundle of $(M,g)$ is compact.\\

Let $\mbox{Per}(g)$ be the set of periodic points of the geodesic flow of $(M,g)$. Given a periodic point $\theta \in \mbox{Per}(g)$ with period $T(\theta)$, consider a family of local sections $\Sigma^{\theta}_{i}$, $i = 0, 1,.., k_{\theta}=[\frac{T(\theta)}{T_{g}}]$,  where $[\frac{T(\theta)}{T_{g}}]$ is the integer part of $\frac{T(\theta)}{T_{g}}$, with the following properties: 
\begin{enumerate}
\item $\Sigma^{\theta}_{i}$ contains the point $\phi_{iT_{g}}(\theta)$ for every $i = 0,1,.., k_{\theta}-1$, 
\item $\Sigma^{\theta}_{i}$ is perpendicular to the geodesic flow at $\phi_{iT_{g}}(\theta)$ for every $ i$.
\end{enumerate}
Let us consider the sequence of symplectic isomorphisms 
$$
\psi_{\theta, g} = \Bigl\{ A_{\theta, i , g} , \, i \in \mathbb{Z} \Bigr\} 
$$ 

\begin{enumerate}
\item For $i = nk_{\theta} + s$, where $n \in \mathbb{Z}$, $0\leq s <k_{\theta}-1$, let 
$$A_{\theta, i , g} = D_{\phi_{sT_{g}}(\theta)}\phi_{T_{g}} : T_{\phi_{sT_{g}}(\theta)} \Sigma^{\theta}_{s} \longrightarrow T_{\phi_{(s+1)T_{g}}(\theta)}\Sigma^{\theta}_{s+1},$$ 

\item For $ i = nk_{\theta} -1$, where $n \in \mathbb{Z}$, let 
$$A_{\theta,i , g}=  D_{\phi_{(k_{\theta}-1)T_{g}}(\theta)}\phi_{T_{g}+ r_{\theta}}: 
 T_{\phi_{(k_{\theta}-1)T_{g}}(\theta)} \Sigma^{\theta}_{(k_{\theta}-1)} \longrightarrow T_{\theta}\Sigma^{\theta}_{0}$$ 
where $T(\theta) = k_{\theta} T_{g} + r_{\theta}$. 
\end{enumerate}
Notice that the sequence  $\psi_{\theta, g}$ is periodic and let 
$$ 
\psi_{g} = \Bigl\{ \psi_{\theta,g}, \theta \in \mbox{Per}(g) \Bigr\}.
$$ 
The family $\psi_{g}$ is a collection of periodic sequences, and by Franks' Lemma from Ma\~{n}\'{e}'s viewpoint (Theorem \ref{THMmain}) we have 

\begin{lemma} \label{Franks1}
Let $(M,g)$ be a compact Riemannian manifold. If  $(M,g)$ is in the interior of $F^{2}(M,g)$ then the family $\psi_{g}$ is uniformly hyperbolic. 
\end{lemma}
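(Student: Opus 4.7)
The plan is to argue by contradiction, using Theorem \ref{THMmain} (Franks' lemma from Ma\~n\'e's viewpoint) to convert any failure of uniform hyperbolicity of the family $\psi_g$ into an arbitrarily $C^{2}$-small conformal perturbation of $g$ that carries a non-hyperbolic closed geodesic, contradicting $g \in \mbox{Int}(F^{2}(M,g))$.

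Concretely, assume $\psi_g$ is not uniformly hyperbolic. Then for each integer $n \geq 1$ there exist a periodically equivalent family $\eta^{(n)}$ with $d(\psi_g,\eta^{(n)}) < 1/n$ and some $\theta_n \in \mbox{Per}(g)$ for which $\eta^{(n),\theta_n}$ is not a hyperbolic periodic sequence. Writing $k_n := k_{\theta_n}$, the $k_n$ symplectic maps $\eta^{(n)}_i$ constituting $\eta^{(n),\theta_n}$ satisfy $\|\eta^{(n)}_i - A_{\theta_n,i,g}\| < 1/n$ for every $i$, and since hyperbolicity of a periodic sequence is equivalent to hyperbolicity of the associated product, the product $\prod_i \eta^{(n)}_i$ is not a hyperbolic linear map. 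I then apply Theorem \ref{THMmain} to each of the $k_n$ segments of $\gamma_{\theta_n}$ separately. The segment lengths $T_i$ lie in the compact interval $[T_g,2T_g]$, over which the constants $\delta_T,\tau_T,K_T$ of Theorem \ref{THMmain} can be chosen uniformly; call them $\delta^{*},\tau^{*},K^{*}$. Once $1/n<\delta^{*}$, Theorem \ref{THMmain} provides, for each $i$, a subsegment $J_i$ of length $\tau^{*}$ and, for any sufficiently small $\rho_i>0$, a conformal factor $\sigma_i$ supported in $\mathcal{C}_g(J_i;\rho_i)$ which keeps $\gamma_{\theta_n}$ a geodesic and replaces the $i$-th linearized Poincar\'e map by $\eta^{(n)}_i$, with $\|\sigma_i\|_{C^{2}}\leq K^{*}/\sqrt{n}$. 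After shrinking the $\rho_i$ so that the supports are pairwise disjoint, the sum $\sigma:=\sum_i\sigma_i$ has $\|\sigma\|_{C^{2}}\leq K^{*}/\sqrt{n}$; the metric $h:=(1+\sigma)g$ keeps $\gamma_{\theta_n}$ as a closed geodesic, and its full linearized Poincar\'e map along $\gamma_{\theta_n}$ is the product $\prod_i \eta^{(n)}_i$, which is not hyperbolic. Letting $n\to\infty$ produces a sequence of conformal perturbations with $C^{2}$-norms tending to zero each carrying a non-hyperbolic closed orbit, contradicting $g \in \mbox{Int}(F^{2}(M,g))$.

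The principal difficulty is this last packaging step: making the $k_n$ supports pairwise disjoint when $\gamma_{\theta_n}$ has self-intersections, since two subsegments sharing a self-intersection point yield cylinders that automatically overlap no matter how small the radii are taken, and an overlap spoils the clean additivity of the transverse Hessians that drives the construction. It is resolved by sliding each $J_i$ inside its time-$T_g$ segment so as to avoid the finitely many offending self-intersection parameters contributed by the other chosen subsegments (there is room to do so because $\tau^{*}<T_g$ leaves a positive fraction of each segment free), and then choosing the $\rho_i$ below the minimal pairwise distance between the refined subsegments, which is positive by compactness; the uniform choice of $\delta_T,\tau_T,K_T$ for $T\in[T_g,2T_g]$ is a routine consequence of the compactness of $T_1M$ and of the continuous dependence of Theorem \ref{THMmain}'s construction on the underlying geodesic. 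With these two technicalities handled, the contradiction above closes the argument.
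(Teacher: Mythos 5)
Your argument is the contrapositive of the paper's direct one, but the underlying idea is identical: realize every $\delta$-perturbed family of segment-wise linearized Poincar\'e maps by a single $C^2$-small conformal perturbation (applying Theorem \ref{THMmain} once per segment and summing the conformal factors), then invoke that $g$ lies in the interior of $F^{2}(M,g)$. You are considerably more explicit than the paper, whose proof is three sentences and simply asserts that the $K_{T_g}\sqrt{\delta}$-$C^2$ neighborhood of $g$ covers a $\delta$-neighborhood of the Poincar\'e derivatives, so your identification of the packaging step as the crux is accurate and welcome.

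The resolution you sketch for that packaging step is, however, insufficient. Disjointness of the cylinders $\mathcal{C}_g(J_i;\rho_i)$ from \emph{one another} does not suffice: each cylinder must also avoid the rest of the periodic orbit, including the parts lying in no chosen $J_\ell$. If $\gamma_{\theta_n}(s)\in\mathcal{C}_g(J_i;\rho_i)$ for some time $s$ in a different segment $j$ (which happens whenever $\gamma_{\theta_n}(s)=\gamma_{\theta_n}(t')$ with $t'\in J_i$, no matter how small $\rho_i$ is), then $\gamma_{\theta_n}$ does remain a geodesic of $h$ --- $\nabla\sigma_i$ vanishes along $\gamma_{\theta_n}(J_i)$ and hence at that self-intersection point --- but the Hessian of $\sigma_i$ restricted to $\gamma'_{\theta_n}(s)^{\perp}$ is \emph{not} controlled (we only designed it perpendicular to $\gamma'_{\theta_n}(t')$) and feeds into the Jacobi equation of segment $j$, so the identity $P_h(\gamma_{\theta_n})(T(\theta_n))=\prod_i\eta^{(n)}_i$ fails. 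Moreover, even for the overlaps you do address, the number of self-intersection parameters inside a fixed length-$T_g$ segment coming from the remainder of the orbit grows without bound with the period $T(\theta_n)$ (two arcs of length $\leq r_g$ meet at most once, so the count is of order $T(\theta_n)/r_g$), hence there is no fixed $\tau^*$-length subinterval guaranteed to avoid them and ``sliding'' has no room in the limit of long orbits. The paper's own proof is equally silent here, but this is the genuine obstruction in passing from Franks' lemma along a single arc (Theorem \ref{THMmain}) to uniform hyperbolicity of the family along full periodic orbits, and a complete argument must confront it rather than relocate it.
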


\begin{proof}
Let $\delta_{T_{g}} >0$, $K_{T_{g}}$, be given in Franks' Lemma, Theorem \ref{THMmain}.

If $(M,g)$ is in the interior of $F^{2}(M,g)$ then there exists an open $C^{2}$ neighborhood $U$ of $(M,g)$ in the set of metrics which are 
conformally equivalent to $(M,g)$ such that every closed orbit of the geodesic flow of $(M,h) \in U$ is hyperbolic.  In particular, given a periodic 
point $\theta \in T_{1}M$ for the geodesic flow of $(M,g)$, the set of metrics $(M,h_{\theta}) \in U$ for which the orbit of $\theta$ is still a periodic 
orbit for the geodesic flow of $(M,h_{\theta})$ have the property that this orbit is hyperbolic as well for the $h_{\theta}$-geodesic flow. By Theorem 
\ref{THMmain}, for any $\delta \in (0,\delta_{T_{g}})$, the $(K_{T_{g}}\sqrt{\delta})$-$C^{2}$ open neighborhood of the metric $(M,g)$ in 
the set of its conformally equivalent metrics covers a $\delta$-open neighborhood of symplectic linear transformations of the derivatives of the Poincar\'{e} maps between 
the sections $\Sigma^{\theta}_{s}$, $\Sigma^{\theta}_{s+1}$ defined above. Then consider $\delta >0$ such that the 
$(K_{T_{g}}\sqrt{\delta})$-$C^{2}$ open neighborhood of the metric $(M,g)$ is contained in $U$, and we get that the family 
$A_{\theta, i , g}$ is uniformly hyperbolic. Since this holds for every periodic point $\theta$ for the geodesic flow of $(M,g)$ 
the family $\psi_{g}$ is uniformly hyperbolic. 
\end{proof}

Therefore, applying Theorem \ref{persistentlinearhyp} to the sequence $\psi_{g}$ we obtain, 

\begin{theorem} \label{persistentdomination}
Suppose that there exists an open neighborhood $V(\epsilon)$ of $(M,g)$ in $F^{2}(M,g)$. Then there exist constants $K>0$, $D \geq T_{g}$, 
$\lambda \in (0,1)$ such that:
\begin{enumerate}
\item For every periodic point $\theta$ with minimum period
$\omega \geq D$, we have
$$ \prod_{i=0}^{k-1} \parallel D\phi_{D}\vert_{E^{s}(\phi_{iD}(\theta)} \parallel \leq K\lambda^{k} $$
and
$$ \prod_{i=0}^{k-1} \parallel D\phi_{-D}\vert_{E^{u}(\phi_{-iD}(\theta)} \parallel \leq K\lambda^{k} ,$$
where $E^{s}(\tau) \oplus E^{u}(\tau) = N_{\tau}$ is the hyperbolic splitting of the geodesic flow of $(M,g)$ at a periodic
point $\tau$ and $k = [\frac{\omega}{D}]$.
\item There exists a continuous Lagrangian, invariant, dominated  splitting 
$$
T_{\theta} T_{1}M = G^{s}(\theta)\oplus G^{u}(\theta) \oplus X(\theta)
$$
 in the closure of the set of periodic orbits of $\phi_{t}$ which extends the hyperbolic splitting of periodic orbits: if $\theta$ is
periodic then $G^{s}(\theta) = E^{s}(\theta)$, $G^{u}(\theta) = E^{u}(\theta)$.
\end{enumerate}

\end{theorem}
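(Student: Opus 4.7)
The plan is to combine Lemma \ref{Franks1} with Theorem \ref{persistentlinearhyp} and then transfer the linear-algebraic conclusions back to the geodesic flow on $T_1M$, using a continuity/compactness argument to extend the dominated splitting from periodic orbits to their closure.

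First I would invoke Lemma \ref{Franks1}: since $(M,g)$ lies in the interior of $F^2(M,g)$, the family $\psi_g = \{\psi_{\theta,g} \mid \theta \in \mbox{Per}(g)\}$ of periodic symplectic sequences (built from time-$T_g$ Poincar\'e returns along each periodic orbit) is uniformly hyperbolic. Applying Theorem \ref{persistentlinearhyp} to $\psi_g$ yields constants $K>0$, $m \in \N$, $\lambda \in (0,1)$ together with invariant splittings $E^s_j(\psi^{\theta})$, $E^u_j(\psi^{\theta})$ enjoying the contraction estimates (1), the domination estimate (2), and the negative Lyapunov-type averages (3) of that theorem. I set $D := m\, T_g \geq T_g$.

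Next I would transfer the conclusions to the geodesic flow. By construction of $\psi_{\theta,g}$, the product $\prod_{i=0}^{m-1}\psi^{\theta}_{mj+i}$ equals (up to the small remainder $r_\theta < T_g$ absorbed at the final step) the linearized Poincar\'e map $D\phi_{D}$ along the orbit of $\phi_{jD}(\theta)$, acting on $N_{\phi_{jD}(\theta)}$ via the horizontal-vertical identification of the $\Sigma^\theta_i$. The spaces $E^s_j(\psi^\theta)$ and $E^u_j(\psi^\theta)$ thus correspond to the hyperbolic stable/unstable subspaces $E^s(\phi_{jD}(\theta))$, $E^u(\phi_{jD}(\theta))$ at each periodic point with period $\omega \geq D$, and estimate (1) of Theorem \ref{persistentlinearhyp} rewrites precisely as conclusion (1) of the present theorem (the irregular pieces of length $< T_g$ at the ends contribute only a bounded multiplicative constant, which can be absorbed into $K$ by compactness of the geodesic flow).

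For conclusion (2), I would first observe that conclusion (2) of Theorem \ref{persistentlinearhyp} provides a uniform domination inequality at every periodic point: for each periodic $\theta$,
$$
\|D\phi_D |_{E^s(\theta)}\| \cdot \| D\phi_{-D}|_{E^u(\phi_D(\theta))}\| \leq \lambda.
$$
This is a closed condition in the Grassmannian of Lagrangian splittings transverse to the flow direction. Given any $\eta$ in the closure of $\mbox{Per}(g)$, I would pick a sequence of periodic orbits $\theta_n \to \eta$ and extract, using compactness of the Lagrangian Grassmannian of $N_\eta$, subsequential limits $G^s(\eta)$, $G^u(\eta)$ of $E^s(\theta_n)$, $E^u(\theta_n)$. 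Passing to the limit in the domination inequality shows $G^s(\eta) \cap G^u(\eta) = \{0\}$, so $N_\eta = G^s(\eta) \oplus G^u(\eta)$; the same inequality shows the limit is independent of the choice of subsequence (standard dominated-splitting uniqueness: two limits $G^s, \tilde G^s$ satisfying domination against the same $G^u$ must coincide), which yields continuity and flow-invariance of $(G^s,G^u)$ on the closure. Theorem \ref{domination} then upgrades this continuous invariant Lagrangian dominated splitting to a hyperbolic splitting, though for the statement only domination is needed.

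The main obstacle is the extension of the splitting to the closure in a continuous and invariant way: one has to rule out that the sequences $E^s(\theta_n)$, $E^u(\theta_n)$ could develop nontrivial intersection in the limit or depend on the subsequence. This is where the uniform domination from Theorem \ref{persistentlinearhyp}(2) is crucial, since it provides the cone-field estimate that both forces transversality at the limit and gives uniqueness, and is precisely the reason the hypothesis of $C^2$-persistent hyperbolicity (rather than mere hyperbolicity at a single metric) is indispensable.
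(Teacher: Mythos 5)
Your proposal follows the paper's argument exactly: Lemma \ref{Franks1} gives uniform hyperbolicity of the family $\psi_g$, Theorem \ref{persistentlinearhyp} is applied to that family, $D=mT_g$, and the dominated splitting is pushed to the closure of $\mbox{Per}(g)$ by a compactness argument on the Lagrangian Grassmannian. The paper presents this as a one-line deduction and defers the translation/extension details; your filling-in of those details is the intended reasoning, so the two proofs coincide.
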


Theorem \ref{persistentdomination} improves Theorem 2.1 in \cite{ruggiero91} where the same conclusions are claimed assuming that
the geodesic flow of $(M,g)$ is in the $C^{1}$ interior of the set of Hamiltonian flows all of whose periodic orbits are hyperbolic. 

Hence, the proof of Theorem \ref{THM1} follows from the combination of Theorems  \ref{domination} and Theorem \ref{persistentdomination}. 

\subsection{Proof of Theorem \ref{THM2}}

Let $E^{2}(M,g)$ be the set of Riemannian metrics in $M$ conformally equivalent to $(M,g)$, endowed with the $C^{2}$ topology,
whose geodesic flows are expansive. The main result of the subsection is an improved version of Proposition 1.1 in \cite{ruggiero91}.

\begin{theorem} \label{periodichyp}
The interior of $E^{2}(M,g)$ is contained in $F^{2}(M,g)$.
\end{theorem}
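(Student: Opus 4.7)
The plan is a contradiction argument, modelled on the proof of Theorem B in \cite{ruggiero91}. Suppose $h$ lies in the $C^{2}$-interior of $E^{2}(M,g)$ but admits a closed geodesic $\gamma$ of period $T$ whose linearized Poincar\'e map $P:=P_h(\gamma)(T)\in\mbox{Sp}(n-1)$ has an eigenvalue of modulus one. Fix a $C^{2}$-open neighborhood $\mathcal{U}\subset E^{2}(M,g)$ of $h$. By Theorem \ref{THMmain} applied to the arc $\gamma([0,T])$, there exist $\delta_T,K_T>0$ such that every symplectic matrix $A$ in the $\delta$-ball about $P$ (for $\delta<\delta_T$) is realized as $P_{h'}(\gamma)(T)$ for a conformal perturbation $h'=(1+\sigma)h$ with $\|\sigma\|_{C^2}\le K_T\sqrt{\delta}$, supported in a narrow geodesic cylinder about a sub-arc of $\gamma$ and keeping $\gamma$ a closed geodesic of period $T$. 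Shrinking $\delta$, I ensure $h'\in\mathcal{U}$, so the geodesic flow of $h'$ is still expansive. The goal is to produce, near $\gamma$ and for the perturbed flow, a family of orbits that are not on the orbit of $\gamma$ but shadow it at arbitrarily small distance for all time, contradicting the expansiveness of $h'$.

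Since $P$ carries an eigenvalue on the unit circle and the condition ``simple complex-conjugate elliptic pair separated from the rest of the spectrum, with Diophantine rotation'' is dense in every neighborhood of $P$ in $\mbox{Sp}(n-1)$, I can pick $A$ of this form in the $\delta$-ball, and the corresponding $h'\in\mathcal{U}$. The first-return map $\mathcal{R}$ of the geodesic flow of $h'$ on a symplectic transversal to $\gamma$ is then a smooth symplectic diffeomorphism with an elliptic non-resonant fixed point of linearization $A$. Passing to the invariant central $2$-plane (via the smooth centre manifold theorem) and normalizing to Birkhoff form, I invoke Moser's invariant curve theorem: assuming non-degeneracy of the leading twist coefficient, there is a sequence of smooth $\mathcal{R}$-invariant circles accumulating on the fixed point. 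Each such circle suspends to an invariant 2-tube of orbits of the geodesic flow of $h'$ lying within arbitrarily small Sasaki-distance of $\gamma$ for all $t\in\R$. Selecting an orbit on such a tube distinct from the orbit of $\gamma$ and, after a suitable time reparametrization $\rho$ tracking the quasi-periodic rotation on the circle, comparing with the orbit of $\gamma$, I obtain, for every $\epsilon>0$, two distinct orbits remaining uniformly $\epsilon$-close for all time. This rules out $\epsilon$-expansiveness of $h'$ for any $\epsilon>0$, contradicting $h'\in E^{2}(M,g)$.

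The principal obstacle is the non-degeneracy of the leading Birkhoff twist coefficient, since the admissible perturbations are restricted to the Ma\~{n}\'{e} class $h\mapsto(1+\sigma)h$ and thus enjoy less freedom than a general symplectic perturbation. This is handled either by a second application of Theorem \ref{THMmain} on a sub-arc of $\gamma$ disjoint from the first (whose existence follows from Lemma \ref{tauT} together with $\tau_T<T$) to further adjust the next-order normal-form invariant, or, alternatively, by bypassing KAM altogether: one picks $A$ with rational rotation number $p/q$ and $A^{q}$ equal to the identity on the central block, and then applies the implicit function theorem to the $q$-th iterate of the return map to produce a non-trivial continuous family of closed geodesics of period near $qT$ shadowing $\gamma$, which directly contradicts expansiveness of $h'$.
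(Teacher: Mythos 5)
Your proof follows the same broad contradiction scheme as the paper: perturb within the Ma\~n\'e class to put the linearized Poincar\'e map of a non-hyperbolic closed geodesic into a generic elliptic form, pass to the centre manifold, normalize to Birkhoff form, and then produce nearby recurrent dynamics that contradict expansiveness. The paper uses the Birkhoff--Lewis fixed point theorem (Moser's version) to get infinitely many closed orbits in every tubular neighborhood of $\gamma$, whereas you use Moser's invariant circle theorem to get KAM circles; either conclusion contradicts expansiveness, and this difference is not a real issue.

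The genuine gap is exactly the one you flag, and neither of your proposed fixes closes it. Both the invariant curve theorem and Birkhoff--Lewis require a non-degeneracy (twist) condition on the Birkhoff normal form at the elliptic fixed point, and this is a condition on the $3$-jet (and higher) of the Poincar\'e return map, not on its differential. Theorem \ref{THMmain} controls only the linearized Poincar\'e map $P_h(\gamma)(T)$; applying it a second time on a disjoint sub-arc of $\gamma$ still only adjusts the linear part, so it cannot be used to normalize a ``next-order normal-form invariant.'' Your alternative of choosing a rational rotation number with $A^q=I$ on the central block and then invoking the implicit function theorem also fails: after that choice the $q$-th iterate of the return map is tangent to the identity on the $2$-dimensional centre manifold, so $D(\mathcal{R}^q)-I$ is singular precisely in the directions you care about and the IFT gives nothing there; the higher-order terms of $\mathcal{R}^q$ are uncontrolled and can (and generically do) destroy any putative continuum of fixed points. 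The paper fills this hole with a different tool altogether, the $C^k$ Ma\~n\'e-generic version of the Klingenberg--Takens theorem due to Carballo--Gon\c{c}alves \cite{kn:CG}, which says precisely that one can prescribe a generic $k$-jet (hence a generic Birkhoff normal form) of the Poincar\'e map by Ma\~n\'e-type conformal perturbations. Without that (or an equivalent statement controlling jets beyond the first), the twist hypothesis of both KAM and Birkhoff--Lewis remains unverified and the argument does not go through.
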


We just give an outline of the proof based on \cite{ruggiero91}. The argument is by contradiction. Suppose that there exists
$(M,h)$ in the interior of  $E^{2}(M,g)$ whose geodesic flow has a nonhyperbolic periodic point $\theta$. Let $\Sigma$ be a cross section of the
geodesic flow at $\theta$ tangent to $N_{\theta}$. The derivative of the Poincar\'{e} return map has some eigenvalues in the unit circle.
By the results of Rifford-Ruggiero \cite{rr12} $DP$ every generic property in the symplectic group is attained by $C^{2}$ perturbations by potentials
of $(M,h)$ preserving the orbit of $\theta$. This means that there exists $(M,\bar{h})$ $C^{2}$-close to $(M,h)$ and conformally equivalent to it
such that the orbit of $\theta$ is still a periodic orbit of the geodesic flow of $(M,\bar{h})$ and the derivative of the
Poincar\'{e} map $\bar{P}: \Sigma \longrightarrow \Sigma$ has generic unit circle eigenvalues. By the central manifold Theorem of
Hirsch-Pugh-Shub \cite{hps77} there exists a central invariant submanifold $\Sigma_{0} \subset \Sigma$
such that the return map $P_{0}$ of the geodesic flow of $(M,\bar{h})$ is tangent to the invariant subspace associated to the eigenvalues
of $D\bar{P}$ in the unit circle. Moreover, we can suppose by the $C^{k}$ Ma\~{n}\'{e}-generic version of the Klingenberg-Takens Theorem 
due to Carballo-Gon\c{c}alves \cite{kn:CG} that the Birkhoff normal form of the Poincar\'{e} map at the periodic point $\theta$ is 
generic. So we can apply the Birkhoff-Lewis fixed point Theorem due to Moser \cite{kn:Moser} to deduce that given $\delta >0$ there 
exists infinitely many closed orbits of the geodesic flow of $(M,\bar{h})$ in the $\delta$-tubular
neighborhood of the orbit of $\theta$. This clearly contradicts the expansiveness of the geodesic flow of $(M,\bar{h}) \in E^{2}(M,g)$.

In the case where $(M,g)$ is a closed surface, we know that the expansiveness of the geodesic flow implies the density of the set
of periodic orbits in the unit tangent bundle (see \cite{ruggiero91} for instance). So if $(M,g)$ is in the interior of $E^{2}(M,g)$ the closure of
the set of periodic orbits is a hyperbolic set by Theorem \ref{THM1}, and since this set is dense its closure is the unit tangent bundle and
therefore, the geodesic flow is Anosov. If the dimension of $M$ is arbitrary, then we know that if $(M,g)$ has no conjugate points, 
the expansiveness of the geodesic flow implies the density of periodic orbits as well, so we can extend the above result for surfaces.  

\subsection{Main ideas to show Theorem \ref{persistentlinearhyp}}

As mentioned before, Theorem \ref{persistentlinearhyp} is a symplectic version of Lemma II.3 in \cite{kn:Manest} that is proved for general families 
of periodic sequences of linear isomorphisms of $\mathbb{R}^{n}$.  Theorem \ref{persistentlinearhyp} has been already used in \cite{ruggiero91}, and since there is no written proof in the literature we would like to give a sketch of proof for the sake of completeness. We shall not repeat all the steps of the proof of Lemma II.3 in \cite{kn:Manest} because the arguments extend quite forwardly, we shall just point out where the symplectic assumption matters. 

The proof of Lemma II.3 in \cite{kn:Manest} has two main parts. The first part is based on the generic linear algebra of what Ma\~{n}\'{e} calls uniformly contracting 
families of periodic sequences of linear isomorphisms, namely, uniformly 
hyperbolic families of periodic sequences where the unstable part of each sequence is trivial (see \cite{kn:Manest} from pages 527 to 532). Since 
the restriction of the dynamics of a uniformly hyperbolic periodic sequence to the stable subspace gives rise to a uniformly contracting periodic 
sequence the argument consists in proving separatedly uniform contraction properties for the stable part of the dynamics and then uniform expansion properties for the 
unstable part of the dynamics. In the case of hyperbolic symplectic matrices,  the invariant subspaces of the dynamics are always 
Lagrangian, so we have the following elementary result of symplectic linear algebra: 

\begin{lemma} \label{sympdec1}
Given a symplectic matrix $S$ and a Lagrangian invariant subspace $L$ there exists an unitary matrix $U$ such that 
\begin{enumerate}
\item $S= U^{T} YU$ where $Y$ is a $2n\times 2n$ symplectic matrix formed by $n\times n$ blocks of the form 
$$ Y =\left( \begin{array}{cc}
A & B \\
0 & (A^{T})^{-1}
\end{array} \right),$$
where $A^{T}$ is the adjoint of $A$. 
\item The matrix $A$ represents the restriction of $S$ to $L$. 
\end{enumerate}
\end{lemma}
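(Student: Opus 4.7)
\medskip

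\noindent\textbf{Proof plan for Lemma \ref{sympdec1}.}
The plan is to produce $U$ as the change-of-basis matrix associated with a carefully chosen orthonormal symplectic basis of $\R^{2n}$ adapted to the invariant Lagrangian subspace $L$. The starting observation is that, with the standard inner product $\langle\cdot,\cdot\rangle$ and the symplectic form $\omega(u,v) = \langle u,\J v\rangle$, the matrix $\J$ is both orthogonal and satisfies $\J^2=-I_{2n}$, so it plays the role of a compatible complex structure. From this it follows at once that $\J L = L^{\perp}$ whenever $L$ is Lagrangian: indeed, for $u\in L$ and $v\in L$, $\langle \J u,v\rangle = -\omega(u,v) = 0$, so $\J L\subseteq L^{\perp}$, and equality holds by dimension count.

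The next step is to pick any orthonormal basis $e_{1},\ldots,e_{n}$ of $L$ and set $f_{j} := -\J e_{j}$ for $j=1,\ldots,n$. By the previous paragraph the $f_{j}$ form an orthonormal basis of $L^{\perp}$, so $(e_{1},\ldots,e_{n},f_{1},\ldots,f_{n})$ is an orthonormal basis of $\R^{2n}$; a direct check, using $\J^{2}=-I$ and the Lagrangian condition, shows that it is also a symplectic basis, i.e., $\omega(e_{i},e_{j})=\omega(f_{i},f_{j})=0$ and $\omega(e_{i},f_{j})=\delta_{ij}$. I then take $U$ to be the matrix whose rows are these basis vectors (so that, with the natural convention, $Y := USU^{T}$ is the matrix of $S$ in the new basis, giving $S=U^{T}YU$). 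Because the new basis is orthonormal, $U$ is orthogonal; because it is symplectic, $U$ is also symplectic, i.e., $U\in O(2n)\cap \mathrm{Sp}(n)$, which is precisely the image of the unitary group $U(n)$ under the standard identification $\R^{2n}\simeq \C^{n}$ via $\J$.

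Finally, I read off the required block structure of $Y$. Since $L$ is spanned by the first $n$ vectors of the new basis and is $S$-invariant, $Y$ is block upper triangular,
$$
Y = \left(\begin{array}{cc} A & B \\ 0 & D \end{array}\right),
$$
and by construction $A$ is the matrix of $S|_{L}$ in the basis $(e_{1},\ldots,e_{n})$, which gives assertion (2). Assertion (1) then reduces to identifying $D$ with $(A^{T})^{-1}$: this follows directly by imposing the symplectic identity $Y^{T}\J Y=\J$ (legitimate because $U$ is symplectic, so $Y$ is symplectic iff $S$ is), which forces $A^{T}D=I_{n}$ and $A^{T}B$ symmetric.

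The construction is essentially forced, so there is no real obstacle; the only minor subtlety is keeping the conventions consistent, namely checking that the sign choice $f_{j}=-\J e_{j}$ produces the standard symplectic pairing $\omega(e_{i},f_{j})=\delta_{ij}$ (rather than $-\delta_{ij}$) and that the transpose conventions match the assertion $S=U^{T}YU$.
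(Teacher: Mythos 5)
The paper states Lemma \ref{sympdec1} without proof, calling it an elementary result of symplectic linear algebra, so there is no argument to compare against. Your construction is the standard and natural one, and it is correct: the key observation $\J L = L^{\perp}$ for $L$ Lagrangian (from $\langle \J u, v\rangle = -\omega(u,v)$ together with a dimension count) gives an orthonormal symplectic adapted basis $(e_1,\ldots,e_n,-\J e_1,\ldots,-\J e_n)$; the resulting change of basis $U$ lies in $O(2n)\cap\mathrm{Sp}(n)\cong U(n)$; $S$-invariance of $L$ forces $Y=USU^{T}$ to be block upper triangular with $A=S|_{L}$; and $Y^{T}\J Y=\J$ then yields $D=(A^{T})^{-1}$ and $B^{T}D$ symmetric. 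The sign check $\omega(e_i,-\J e_j)=e_i^{T}\J(-\J e_j)=\delta_{ij}$ confirms the pairing comes out with the correct sign, and the convention $Y=USU^{T}$, $S=U^{T}YU$ is consistent with $U$ orthogonal. Nothing is missing.
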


Now, symplectic matrices in $n\times n$ blocks can be characterized in terms of certain algebraic properties of their blocks. 

\begin{lemma} \label{sympdec2}
Let 
$$
S = \left( \begin{array}{cc}
A & B \\
C & D
\end{array} \right)
$$
be a $2n \times 2n$ matrix where $A, B, C, D$ are $n \times n$ blocks. The matrix $S$ is symplectic if and only  if 
\begin{enumerate}
\item $A^{T}D - C^{T}B = I$
\item The matrices $B^{T}D$ and $A^{T}C$ are symmetric. 
\end{enumerate}
So any matrix 
$$ 
M =  \left( \begin{array}{cc}
A & B \\
0 & D
\end{array} \right)
$$ 
formed by $n\times n$ blocks $A,B,0,D$ is symplectic if and only if $D= (A^{T})^{-1}$ and $B^{T}D$ is symmetric . 
\end{lemma}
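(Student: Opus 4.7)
The approach is a direct computation: expand the defining equation $S^{\ast}\J S = \J$ of the symplectic group in terms of the $n \times n$ blocks $A,B,C,D$, read off what the four resulting $n\times n$ block equations say, and match them against the two claimed conditions.

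Concretely, I first compute
$$
\J S = \left( \begin{matrix} 0 & I \\ -I & 0 \end{matrix} \right)\left( \begin{matrix} A & B \\ C & D \end{matrix} \right) = \left( \begin{matrix} C & D \\ -A & -B \end{matrix} \right),
$$
and then
$$
S^{\ast}\J S = \left( \begin{matrix} A^{T} & C^{T} \\ B^{T} & D^{T} \end{matrix} \right)\left( \begin{matrix} C & D \\ -A & -B \end{matrix} \right) = \left( \begin{matrix} A^{T}C - C^{T}A & A^{T}D - C^{T}B \\ B^{T}C - D^{T}A & B^{T}D - D^{T}B \end{matrix} \right).
$$
Equating with $\J$ block by block yields simultaneously $A^{T}C-C^{T}A = 0$ (so $A^{T}C$ is symmetric), $B^{T}D - D^{T}B = 0$ (so $B^{T}D$ is symmetric), $A^{T}D - C^{T}B = I$, and the fourth equation $B^{T}C - D^{T}A = -I$, which is just the transpose of the third. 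Conversely, if conditions (1) and (2) hold, the same computation shows $S^{\ast}\J S = \J$, so $S$ is symplectic.

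For the special case where $C=0$, conditions (1) and (2) reduce to $A^{T}D = I$, $A^{T}\cdot 0 = 0$ (automatic), and $B^{T}D$ symmetric. The first says $D = (A^{T})^{-1}$ (in particular $A$ must be invertible, which is forced anyway since $S$ is symplectic, hence invertible), and what remains is precisely the symmetry of $B^{T}D$.

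This is really a computation rather than an argument, so there is no substantive obstacle; the only thing to be careful about is bookkeeping of transposes and signs, and remembering that the two off-diagonal block equations are redundant (one is the transpose of the other), which explains why only three scalar block identities, grouped as the two conditions of the statement, suffice.
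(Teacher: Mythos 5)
Your proof is correct, and it is the standard direct block computation one would expect for this elementary fact of symplectic linear algebra. The paper itself states Lemma \ref{sympdec2} without giving a proof (it is presented as a known characterization), so there is no alternative argument in the paper to compare against; your derivation of the three independent block identities from $S^{*}\J S=\J$, together with the observation that the $(2,1)$ block equation is the transpose of the $(1,2)$ block equation, is exactly the right justification, and the specialization to $C=0$ is handled correctly as well.
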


Hence, to extend to the symplectic case Ma\~{n}\'{e}'s generic linear algebra arguments for uniformly contracting families of periodic isomorphisms 
one can consider the family of restrictions of hyperbolic symplectic matrices to their stable subspaces.  This family is represented by a family 
of uniformly contracting periodic $n\times n$ linear isomorphisms $\psi = \{ A^{\alpha}, \alpha \in \Lambda\}$ placed in the upper left block 
of the differentials of Poincar\'{e} maps according to Lemma \ref{sympdec1}.  
Then observe that any open neighborhood of the family $\psi$ according to the distance $d(\psi, \eta)$ can be embedded in a neighborhood 
of a family of symplectic isomorphisms just by applying Lemma \ref{sympdec2}. We can build a symplectic family of symplectic isomorphisms from a perturbation $\tilde{A}^{\alpha}$ of $A^{\alpha}$ taking $\tilde{D} = (\tilde{A}^{T})^{-1}$ and finding $\tilde{B}$ close to $B$ such that $\tilde{B}^{T}\tilde{D}$ is symmetric. Such matrix $\tilde{B}$ exists because the set of symmetric matrices is a submanifold of the set of matrices, and $B^{T}\tilde{D}$ is close to the symmetric matrix $B^{T}D$. So there exists 
$\epsilon >0$ such that the ball $V_{\epsilon}$ of radius $\epsilon$ of matrices centered at $B^{T}\tilde{D}$ meets the submanifold of symmetric matrices in an open (relative) neighborhood of $B^{T}D$ . But the multiplication of an open neighborhood $V(B^{T})$ of $B^{T}$ by $\tilde{D}$ gives an open neighborhood 
of $B^{T}\tilde{D}$ in the set of matrices. Then for a suitable choice of $V(B^{T})$ we have that $V(B^{T})D$ contains a  matrix $\tilde{B}^{T}\tilde{D}$ that is symmetric. 
\bigskip

 Therefore, Ma\~{n}\'{e}'s arguments for uniformly contracting families can be extended to the symplectic category. 
Finally, let us remark that the symplectic nature of the family implies that contraction properties 
of the norm of the restriction to the stable part under the action of the dynamics already give expansion properties for the action of the 
dynamics on the norm of the restriction to the unstable part. This yields that it is enough to consider the contracting part of the dynamics 
of a symplectic family of periodic linear isomorphisms to extend the first part of the proof of Lemma II.3 in \cite{kn:Manest} to such families. 
\bigskip

The second part of the proof deals with the angle between the invariant subspaces  of uniformly hyperbolic families (see \cite{kn:Manest} pages 532-540). 

\begin{definition}
Given two subspaces $E, S \subset \mathbb{R}^{n}$ such that $E\bigoplus S= R^{n}$, let $\measuredangle(E,S)$ be 
defined by 
$$ \measuredangle(E,S) = \parallel L \parallel^{-1}$$ 
where $L : E^{\bot} \longrightarrow E$ is such that $S= \{ v+L(v), v \in E^{\bot}\}.$ In particular, $\measuredangle(E, E^{\bot}) = \infty$. 
\end{definition}

The main goal of this part of the proof of Lemma II.3 in \cite{kn:Manest} is to show that the invariant splitting of a 
uniformly hyperbolic family is a continuous dominated splitting. The general idea of the proof of this second part is to "move" one of the invariant 
subspaces of the dynamics with perturbations of the map $L$ while keeping the other subspace unchanged. 

The proof of the continuous domination  has two steps. First of all, so show that the angle between the invariant subspaces 
must be bounded below by a positive constant (Lemma II.9 in \cite{kn:Manest} pages 532 to 534). 
This is the content of the following result whose proof we present in detail to give a sample 
of how the arguments extend to symplectic matrices. We just follow step by step Ma\~{n}\'{e}'s proof, we even respect the notations in his paper. 

\begin{lemma} \label{angle}
Let $\{ \psi^{\alpha}, \alpha \in \Lambda\}$ be a uniformly hyperbolic family of periodic sequences of 
symplectic isomorphisms of $\mathbb{R}^{2n}$. Then, there exist $\epsilon >0$, $\gamma >0$, and $n_{0} \in \mathbb{Z}^{-}$ 
such that if $\{\eta^{\alpha}, \alpha \in \Lambda\}$ is a periodically equivalent family with $d(\psi, \eta) <\epsilon$ 
then $\{ \eta^{\alpha}, \alpha \in \Lambda\}$ is hyperbolic and the angle between stable and unstable subspaces satisfies
$$\measuredangle(E^{s}_{0}(\eta^{\alpha}), E^{u}_{0}(\eta^{\alpha})) > \gamma $$
for every $\alpha \in \Lambda$ such that the minimum period of $\eta^{\alpha}$ is greater than $n_{0}$. 
\end{lemma}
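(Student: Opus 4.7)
The plan is to adapt Mañé's proof of Lemma II.9 in \cite{kn:Manest}, which uses arbitrary linear perturbations, by performing all surgery inside the symplectic group. I argue by contradiction: suppose there exist sequences $\epsilon_k \downarrow 0$, $n_k \to \infty$, parameters $\alpha_k \in \Lambda$, and periodically equivalent families $\eta_k = \{\eta_k^\alpha\}$ with $d(\psi,\eta_k)<\epsilon_k$, such that the minimum period of $\eta_k^{\alpha_k}$ is at least $n_k$ and $\measuredangle\bigl(E^s_0(\eta_k^{\alpha_k}), E^u_0(\eta_k^{\alpha_k})\bigr) \to 0$. From the small angle I extract unit vectors $u_k \in E^u_0(\eta_k^{\alpha_k})$ and $v_k \in E^s_0(\eta_k^{\alpha_k})$ with $\|u_k - v_k\| \to 0$, after possibly flipping the sign of $v_k$.

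The next step is the symplectic analogue of Mañé's perturbation: produce, for each $k$, a matrix $T_k \in \mbox{Sp}(n)$ with $T_k(u_k) = v_k$ and $\|T_k - I\| = O(\|v_k - u_k\|)$. Granting this, I define $\tilde{\eta}_k$ by replacing only the index-$0$ term of the $\alpha_k$-sequence, $(\eta_k^{\alpha_k})_0$, with $(\eta_k^{\alpha_k})_0 \circ T_k$ (propagated by periodicity), and leaving $\eta_k^{\beta}$ unchanged for $\beta \neq \alpha_k$. This family is still periodically equivalent to $\psi$, and a uniform norm bound on the $\psi_n^\alpha$ gives $d(\psi,\tilde{\eta}_k) \leq \epsilon_k + C\|T_k - I\| \to 0$. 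The vector $u_k$ now has bounded orbit in both time directions under $\tilde{\eta}_k^{\alpha_k}$: its backward orbit coincides with that under $\eta_k^{\alpha_k}$ and is therefore bounded since $u_k \in E^u_0$, while its forward orbit coincides, after the single perturbed step, with the forward orbit of $v_k$, which is bounded since $v_k \in E^s_0$. Hence $\tilde{\eta}_k^{\alpha_k}$ is not hyperbolic, contradicting the uniform hyperbolicity of the family $\psi$ and producing the desired $\gamma$.

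The heart of the argument, and the only genuine obstacle, is the symplectic realization of $T_k$: in Mañé's original setup one is free to use any near-identity $GL(2n)$ perturbation, whereas here we must stay inside $\mbox{Sp}(n)$. The key is that $\mbox{Sp}(n)$ acts transitively on $\R^{2n}\setminus\{0\}$; infinitesimally this amounts to surjectivity of the linear map $X \in \mathfrak{sp}(n) \mapsto X u_k \in \R^{2n}$, where $\mathfrak{sp}(n) = \{X \in M_{2n}(\R) : \J X \in \mathcal{S}(2n)\}$. Surjectivity at any $u_k \neq 0$ is immediate by writing down Hamiltonian matrices $X$ of the form $\J^{-1}(u_k w^* + w u_k^*)/2$ with $w$ ranging over $\R^{2n}$. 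The implicit function theorem then yields $T_k$ depending smoothly on $v_k$ near $u_k$, with $\|T_k - I\| = O(\|v_k - u_k\|)$ and with constants uniform over the unit sphere by compactness. With this symplectic surgery available, the rest of Mañé's scheme transfers without further modification, yielding the constants $\epsilon, \gamma > 0$ and $n_0$ in the statement.
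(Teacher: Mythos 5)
Your symplectic-realization step (finding $T_k\in\mbox{Sp}(n)$ near the identity with $T_ku_k=v_k$, with uniform norm bounds via compactness on the unit sphere) is correct and is indeed the genuinely symplectic ingredient one must supply; that part is sound. The problem lies earlier, in the orbit-gluing claim, and it is a real gap.

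You replace $(\eta_k^{\alpha_k})_0$ by $(\eta_k^{\alpha_k})_0\circ T_k$ \emph{and propagate by periodicity}. That last clause is where the argument breaks. Write $Q=\eta_{n-1}\cdots\eta_1\eta_0$ for the original return map (suppressing $\alpha_k$); the perturbed return map is $\tilde Q=QT_k$. Your assertion is that the forward $\tilde\eta$-orbit of $u_k$ ``coincides, after the single perturbed step, with the forward orbit of $v_k$.'' That is only true for one period. After $n$ steps the perturbed orbit sits at $Qv_k$; the next step applies $\tilde\eta_0=\eta_0T_k$ again (by periodicity), giving $\eta_0T_kQv_k$, which differs from $\eta_0Qv_k$ because $T_k$ is tailored to move $u_k$, not to fix $Qv_k$. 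The same defect appears on the backward side: $\tilde\eta_{-n}^{-1}=T_k^{-1}\eta_0^{-1}$, so after $n$ backward steps the perturbed orbit is $T_k^{-1}Q^{-1}u_k\neq Q^{-1}u_k$. So $u_k$ has \emph{not} been shown to have a bounded bi-infinite orbit under $\tilde\eta$, and no non-hyperbolicity is derived. The intuition you are using (splice the backward piece of an unstable orbit to the forward piece of a stable one through a localized perturbation) works for diffeomorphisms because the orbit visits the support of the perturbation only once; for periodic linear sequences every perturbation recurs each period, which kills the splice.

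What the paper (following Ma\~n\'e's Lemma II.9) does instead is force an honest eigenvalue $1$ of $\tilde Q$, which yields a periodic --- hence trivially bounded --- orbit. Concretely: write $M=\prod_j\eta_j$ in coordinates adapted to $E^s_0(\eta)\oplus E^s_0(\eta)^\perp$, so $M=\left(\begin{smallmatrix}A&0\\P&B\end{smallmatrix}\right)$ with $A$ expanding and $B=(A^T)^{-1}$ contracting (symplectic structure); show that a small angle forces $\|PA^{-1}\|^{-1}$ small; then perturb by the shear $\left(\begin{smallmatrix}I&C\\0&I\end{smallmatrix}\right)$, which is symplectic exactly when $C$ is symmetric, and solve for a small symmetric $C$ (with $\|C\|\le 2\measuredangle$) so that $M\left(\begin{smallmatrix}I&C\\0&I\end{smallmatrix}\right)$ has a fixed vector. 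The symmetry constraint on $C$ is the only place the symplectic restriction bites, and the paper shows the required $C$ can indeed be taken symmetric. If you want to keep your near-identity map $T_k$, you would have to show that $QT_k$ actually has an eigenvalue on the unit circle (or a bounded orbit) --- and the natural way to do that is precisely Ma\~n\'e's fixed-point computation, not the orbit splice.
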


\begin{proof}
 Suppose by contradiction that the statement is false. Then there would exist 
hyperbolic periodic sequences $\eta : \mathbb{Z} \longrightarrow Sp(2n,\mathbb{R})$ with arbitrarily large period $n$, such that 
\begin{enumerate}
\item $\measuredangle(E^{s}_{0}(\eta^{\alpha}), E^{u}_{0}(\eta^{\alpha}))$ is arbitrarily small,
\item For some $\alpha \in \Lambda$ the periods of $\psi^{\alpha}$ and $\eta$ coincide, 
\item $\sup_{i} \parallel \eta_{i} - \psi^{\alpha}_{i}) \parallel$ is arbitrarily small. 
\end{enumerate}
Suppose that in the coordinates of the base $E^{s}_{0}(\eta)^{\bot} \bigoplus E^{s}_{0}(\eta)$ the matrix of 
$\prod _{j=0}^{n-1}\eta_{j}$ is 
$$  M= \left( \begin{array}{cc}
A & 0 \\
P & B
\end{array} \right)$$
where $A, P, B$ are $n \times n$ matrices. By the uniform contraction property of the stable part of the dynamics 
of the family there exist $K>0$, $\lambda \in (0,1)$ such that 
$$\parallel A^{-1} \parallel \leq K\lambda^{n}$$ 
and 
$$ \parallel B \parallel \leq K\lambda^{n} .$$
Since we can choose an orthogonal change of coordinates $Q$, we have that the matrix $M= QA$ is a symplectic matrix and thus, 
by Lemma \ref{sympdec1} we get $B= (A^{T})^{-1}$. 

Let $L : E^{s}_{0}(\eta)^{\bot} \longrightarrow E^{s}_{0}(\eta)$ be such that $\{v + L(v), v \in E^{s}_{0}(\eta)^{\bot}\} = E^{u}_{0}(\eta)$. 
Since $\prod _{j=0}^{n-1}\eta_{j}(E^{u}_{0}(\eta)) = E^{u}_{0}(\eta)$ we get 
$$ LA = P + BL,$$
and therefore, 
$$ L = PA^{-1} + BLA^{-1}$$ 
and by the previous inequalities 
$$ 
\parallel L \parallel \leq \parallel PA^{-1} \parallel + K^{2} \lambda^{2n}\parallel L \parallel.
$$
For $n$ large enough, $K^{2} \lambda^{2n} \leq \frac{1}{2}$, so we have 
$$ \frac{1}{2} \parallel P A^{-1} \parallel^{-1} \leq \parallel L \parallel^{-1} = \measuredangle(E^{s}_{0}(\eta^{\alpha}), E^{u}_{0}(\eta^{\alpha})), $$
and hence the number $\parallel P A^{-1} \parallel^{-1}$ assumes arbitrarily small values by the contradiction assumption. 

Next, define a sequence $\xi : \mathbb{Z} \longrightarrow Sp(2n, \mathbb{R})$ with minimum period $n$, where 
\begin{enumerate}
\item $\xi_{i} = \eta_{i}$ for every $0 < i \leq n-1$, 
\item $\xi_{0} = \eta_{0} \left( \begin{array}{cc}
I & C \\
0 & I
\end{array} \right).$
\end{enumerate}
The matrix $\left( \begin{array}{cc}
I & C \\
0 & I
\end{array} \right)$ is symplectic for every $C$ such that $C$ is symmetric by Lemma \ref{sympdec2}. Then, 
$$ \prod_{i=0}^{n-1} \xi_{i} = 
\left( \begin{array}{cc}
A & 0 \\
P & B
\end{array} \right) \left( \begin{array}{cc}
I & C \\
0 & I
\end{array} \right) = \left( \begin{array}{cc}
A & AC \\
P & B + PC
\end{array} \right).$$
So the goal is to find a symmetric matrix $C$ with small norm such that the above matrix has an eigenvalue equal to $1$. In this way 
we get a contradiction because we are supposing that the family $\{ \psi^{\alpha}, \alpha \in \Lambda\}$ is uniformly 
hyperbolic so any sufficiently close family would have to be as well. 

To find the matrix $C$ let us consider the system 
$$ 
\left\{
\begin{array}{ccl}
Ax+ ACy & =& x \\ 
Px+(PC+B) y & = &y.
\end{array}
\right.
$$
For a solution $(x,y)$ of the system we would have 
$$ x= (I-A)^{-1}ACy= -(I-A^{-1}){-1}Cy $$ 
and 
$$ (I-B)^{-1}P(I-(I-A^{-1})^{-1})Cy=y.$$ 
Notice that $I-(I-A^{-1})^{-1}= -A^{-1}(I-A^{-1})^{-1}$, so we get 
$$ -(I-B)^{-1}PA^{-1}(I-A^{-1})^{-1}Cy = y.$$
Take a vector $v$ such that $\parallel v \parallel = \parallel PA^{-1} \parallel^{-1}$, and $\parallel PA^{-1}v \parallel =1$. 
Let 
$$y = -(I-B)^{-1}PA^{-1}v.$$ 
Since $\parallel B \parallel \leq K\lambda^{n}$ we can assume that $\parallel I - B \parallel \leq 2$. Hence $\parallel y \parallel^{-1} \leq 2$. 
Now take a vector $w$ such that 
$$ (I-A^{-1})^{-1}w = v.$$
Since the norm of $A^{-1}$ is small the matrix $(I-A^{-1}) $ is close to the identity, so we can suppose that 
$\parallel w \parallel \leq 2 \parallel v\parallel$. Next, consider a matrix $C$ such that 
$$ Cy = w, \mbox{ } \parallel C \parallel = \frac{\parallel w \parallel}{\parallel y \parallel}. $$
Observe that 
$$\parallel C \parallel \leq 4\parallel v \parallel = 4\parallel PA^{-1}\parallel^{-1} \leq 2 \measuredangle(E^{s}_{0}(\eta^{\alpha}), E^{u}_{0}(\eta^{\alpha}))$$
that can be made arbitrarily small. Thus, the matrix $C$ and the vector $y$ defined above give a fixed point $(x,y)$ for 
the matrix $ \left( \begin{array}{cc}
A & AC \\
P & B + PC
\end{array} \right)$ which shows that the sequence $\xi_{i}$ is not hyperbolic. 
\bigskip

Notice that the conditions defining $C$ are quite loose, there are many possible candidates. In particular, the matrix $C$ can be taken symmetric. Indeed, 
symmetric matrices are linear maps which send the unit sphere to ellipsoids centered at $0$. Moreover, the norm of such a map is the length of the largest axis of the 
corresponding ellipsoid. So let us consider a linear map $T$ such that $T(y)=w$ as the linear map $C$ does, and take $T$ such that 
\begin{enumerate}
\item The image of the unit vector $\frac{y}{\parallel y \parallel}$ by $T$ is $\frac{w}{\parallel y \parallel}$. 
\item The image of the unit sphere by $T$ is an ellipsoid whose largest axis is contained in the line $tw$, $t \in \mathbb{R}$, 
and whose length is $\frac{\parallel w \parallel}{\parallel y \parallel}$. 
\end{enumerate}
If we take $C= T$ we have a symmetric matrix solving the above system of equations. 
\end{proof}

The final step of the second part of the proof is to show that the uniform hyperbolicity of families combined with the 
existence of a lower bound for the angle between invariant subspaces implies the domination condition (\cite{kn:Manest} pages 534-540). The argument is 
by contradiction: if the domination condition is not satisfied then it is possible to find a small perturbation of the family such that 
the invariant subspaces of the perturbed one are very close to each other, which is impossible by Lemma \ref{angle}. 
The proof is involved but again, the tools of the proof are quite general and elementary in linear algebra, they can be 
adapted straighforwardly to symplectic matrices.

\appendix 
\section{Proof of Lemma \ref{LEMtechnical1nov}}

First of all, we observe that given $L_{\bar{i},\bar{j}}^N$, the existence of $K(N)$ follows by homogeneity and continuity of the mapping 
$$
v \in L_{\bar{i},\bar{j}}^N \, \longmapsto \, \bigl(tv_{\bar{i}}\bigr) \odot \bigl(sv_{\bar{j}}\bigr)  \in \R.
$$
Let us now demonstrate the existence of  $L_{\bar{i},\bar{j}}^N$  by induction over $N$. In fact, setting $f=v_{\bar{i}}, g=v_{\bar{j}}$, it is sufficient to show that the set $\mathcal{L}$ of $w=(f,g) \in L^2([0,1];\R^2)$ with $f$ and $g$ polynomials satisfying 
\begin{eqnarray}\label{21nov1}
\left\{
\begin{array}{rcl}
\int_0^1 f(s) ds & = & 0 \\
\int_0^1 sf(s) ds & = & 0 \\ 
\int_0^1 g(s) ds & = & 0 \\ 
\int_0^1 sg(s) ds & = & 0\\
f \odot (sg) & = & 0 \\
 g \odot (sf) & = & 0 \\
  f \odot (s^2 g) & = & 0 \\
  g \odot (s^2 f) & = & 0 \\
\end{array}
\right.
\end{eqnarray}
and
\begin{eqnarray}\label{21nov2}
(tf) \odot (sg) \ne 0
\end{eqnarray}
contains (adding the origin) vector spaces $L^N$ of any dimension. When $f,g$ are polynomials, that is of the form 
$$
f(t) = \sum_{p \in \Z} a_p t^p \quad \mbox{and} \quad g(t) = \sum_{q \in \Z} b_q t^q
$$
with $a_p=b_q=0$ for any $p,q <0$ and  $a_p=b_q=0$ for large $p,q$, we check easily that (from now on, we omit to write the set $\Z$ containing $p$ and $q$)
$$
f \odot g = \sum_{p,q } \alpha_{p,q} a_p \, b_q,
$$
with $ 1/\alpha_{p,q}=(q+1)(p+q+2)$. Then we have 
\begin{eqnarray*}
\left\{
\begin{array}{rcl}
\int_0^1 f(s) ds & =  & \sum_{p} \frac{1}{p+1} \, a_p \\
\int_0^1 sf(s) ds & = & \sum_{p} \frac{1}{p+2} \, a_p \\
\int_0^1 g(s) ds & =  & \sum_{q } \frac{1}{q+1} \, b_q \\
\int_0^1 sg(s) ds & = & \sum_{q} \frac{1}{q+2} \, b_q \\
f \odot (sg) & =  & \sum_{p,q } \alpha_{p,q} a_p \, b_{q-1} \\
f \odot (s^2 g) & = & \sum_{p,q } \alpha_{p,q} a_p \, b_{q-2} \\
g \odot (sf) & = &  \sum_{p,q } \alpha_{q,p} a_{p-1} \, b_{q} \\
g \odot (s^2 f) & = &  \sum_{p,q } \alpha_{q,p} a_{p-2} \, b_{q},
\end{array}
\right.
\end{eqnarray*}
and
$$
(tf) \odot (sg) =  \sum_{p,q } \alpha_{p,q} a_{p-1}\, b_{q-1}.
$$
We can now show that the set $\mathcal{L} \cup \{0\}$ contains a vector line. As a matter of fact, taking $f(t)=1-6t+6t^2$ and taking $g$ in the set of polynomial of degree $\leq d$, leads to the system
\begin{eqnarray}\label{13dec1}
\left\{
\begin{array}{rcl}
\sum_{q } \frac{1}{q+1} \,\, b_q & = & 0 \\
\sum_{q} \frac{1}{q+2} \,\, b_q & = & 0 \\
\sum_{q } (\alpha_{0,q}-6 \alpha_{1,q}+6 \alpha_{2,q}) \, b_{q-1} = \sum_{q } \frac{q+1}{(q+3)(q+4)(q+5)} \,\, b_{q} & = & 0 \\
\sum_{q } (\alpha_{0,q}-6 \alpha_{1,q}+6 \alpha_{2,q}) \, b_{q-2} = \sum_{q } \frac{q+2}{(q+4)(q+5)(q+6)} \, \,b_{q} & = & 0 \\
\sum_{q } (\alpha_{q,1}-6 \alpha_{q,2}+6 \alpha_{q,3})\, b_{q} = -\sum_{q} \frac{q+2}{(q+3)(q+4)(q+5)} \,\,b_{q}  & = & 0 \\
\sum_{q } (\alpha_{q,2}-6 \alpha_{q,3}+6 \alpha_{q,4}) \,b_{q} = \frac{1}{30}\sum_{q} \frac{q^2-16q-60}{(q+4)(q+5)(q+6)} \,\,b_{q}  & = & 0,
\end{array}
\right.
\end{eqnarray}
which is the system of equations of the intersection of $6$ hyperplans $H_1$, $H_2$, $H_3$, $H_4$, $H_5$ and $H_6$ respectively, that we denote by 
$$
V:=\cap_{i=1}^6 H_i \subset \R_d[X].
$$
Then $V$ has dimension at least $d-5$. We can check with Maple that there is a $\bar{d} \in \N$ sufficiently large such that $V$ is not contained in the kernel of the linear form $$
\phi: (b_q) \in \R_{\bar{d}}[X] \mapsto  \sum_{q } \frac{q+3}{(q+4)(q+5)(q+6)}  \, b_{q}. 
$$
For every $i=1,...,6$, let $\phi_i$ be the linear form correspoding to the $i$-th line in (\ref{13dec1}) and denote by $A(d)$ the $7 \times (d+1)$ matrix whose seven lines are given by the coeficients of $\phi_1$, $\phi_2$, $\phi_3$, $\phi_4$, $\phi_5$, $\phi_6$ and $\phi$ respectively, that is 
$$A(d):=
\left(
\begin{matrix}
 1 & \frac{1}{2} & \cdots &  \frac{1}{d} &  \frac{1}{d+1} \\
  \frac{1}{2} & \frac{1}{3} & \cdots & \frac{1}{d+1} &  \frac{1}{d+2} \\
  \frac{1}{60} &\frac{1}{60} & \cdots &  \frac{d}{(d+2)(d+3)(d+4)} &  \frac{d+1}{(d+3)(d+4)(d+5)} \\
    \frac{1}{60} & \frac{1}{70} & \cdots &  \frac{d+1}{(d+3)(d+4)(d+5)} &  \frac{d+2}{(d+4)(d+5)(d+6)} \\
    \frac{1}{30} & \frac{1}{40} & \cdots & \frac{d+1}{(d+2)(d+3)(d+4)} &  \frac{d+2}{(d+3)(d+4)(d+5)}  \\
       \frac{-1}{2} & \frac{-5}{14} & \cdots &  \frac{(d-1)^2-16(d-1)-60}{(d+3)(d+4)(d+5)} &  \frac{d^2-16d-60}{(d+4)(d+5)(d+6)} \\
 \frac{1}{40} & \frac{2}{105} & \cdots & \frac{d+2}{(d+3)(d+4)(d+5)} &  \frac{d+3}{(d+4)(d+5)(d+6)}         
\end{matrix}
\right).
$$
We check with Maple that $\mbox{rank} \bigl(A(50)\bigr)=7$, which shows that $\phi \notin \mbox{Span} \left( \phi_1,...,\phi_6 \right),$ and in turn that $V \not\subset \mbox{Ker} (\phi)$. Therefore there is a solution $(b_q) \in \R_{50}[X]$ of the above system which satisfies 
$$
\Bigl(f(t)=1-6t+6t^2\Bigr) \odot \left(  g(t) = \sum_{q} b_q t^q \right)  =   \sum_{q } \frac{q+3}{(q+4)(q+5)(q+6)}  \, b_{q}=1.
$$

Assume now that we proved the existence of a vector space $L^N \subset \mathcal{L} \cup \{0\}$ of dimension $N\geq 1$. Let $\{(f_1,g_1), \ldots, (f_N,g_N)\}$ be a basis of $L^N$. We need to find a pair $(f,g)$ such that for any $\alpha=(\alpha_1, \ldots, \alpha_N) \in \R^N$ and $\beta \in \R$, the pair 
$$
\left( \beta f + \sum_{l=1}^N \alpha_l f_l,  \beta g + \sum_{l=1}^N \alpha_l g_l \right)
$$
satisfies (\ref{21nov1}) and (\ref{21nov2}). By bilinearity of the $\odot$ product, this amounts to say that
\begin{eqnarray}\label{13dec2}
\left\{
\begin{array}{rcl}
\beta^2 \, f \odot (sg) + \beta \sum_{l=1}^N \alpha_l \, f \odot (sg_l) + \beta  \sum_{l=1}^N \alpha_l \, f_l \odot (sg) & =  & 0 \\
\beta^2 \, f \odot (s^2g) + \beta \sum_{l=1}^N \alpha_l \, f \odot (s^2g_l) + \beta  \sum_{l=1}^N \alpha_l \, f_l \odot (s^2g) & =  & 0 \\
\beta^2 \, g \odot (sf) + \beta \sum_{l=1}^N \alpha_l \, g \odot (sf_l) + \beta  \sum_{l=1}^N \alpha_l \, g_l \odot (sf) & =  & 0 \\
\beta^2 \, g \odot (s^2f) + \beta \sum_{l=1}^N \alpha_l \, g \odot (s^2f_l) + \beta  \sum_{l=1}^N \alpha_l \, g_l \odot (s^2f) & =  & 0,
\end{array}
\right.
\end{eqnarray}
\begin{eqnarray}\label{21nov5}
 \left\{
\begin{array}{rcl}
\int_0^1 f(s)\, ds  & = & 0 \\
\int_0^1 sf(s)\, ds   & = & 0, 
\end{array}
\right.
\qquad  
 \left\{
\begin{array}{rcl}
\int_0^1 g(s)\, ds  & = & 0 \\
\int_0^1 sg(s)\, ds  & = & 0, 
\end{array}
\right.
\end{eqnarray}
and
\begin{multline}\label{13dec3}
\beta^2 \, (tf) \odot (sg) + \beta \sum_{l=1}^N \alpha_l \, (tf) \odot (sg_l) + \beta  \sum_{l=1}^N \alpha_l \, (tf_l) \odot (sg) + \sum_{l=1}^N \alpha_l^2  (tf_l) \odot (sg_l)\\
\ne 0. 
\end{multline}
In fact, any pair $(f,g)$ satisfying (\ref{21nov1})-(\ref{21nov2}) and the systems 
\begin{eqnarray}\label{21nov3}
 \left\{
\begin{array}{rcl}
f \odot (sg_l) & = & 0 \\
f \odot (s^2 g_l)  & = & 0 \\
 g_l \odot (sf)  & = & 0 \\
g_l \odot (s^2f)   & = & 0 \\
(tf) \odot (sg_l)  & = & 0 
\end{array}
\right.
\qquad  
 \left\{
\begin{array}{rcl}
f_l \odot (sg) & = & 0 \\
 f_l \odot (s^2g) & = & 0 \\
g \odot (sf_l)  & = & 0 \\
g \odot (s^2 f_l)  & = & 0 \\
(tf_l) \odot (sg) & = & 0
\end{array}
\right.
\end{eqnarray}
provides a solution. First we claim that there is a polynomial $f_0$ satisfying the left systems in (\ref{21nov5}) and (\ref{21nov3}). As a matter of fact, $f_0$ has to belong to the intersection of $2+5N$ hyperplanes in $\R_d[X]$. Such an intersection is not trivial if $d$ is large enough. The function $f=f_0$ being fixed, we need now to find a polynomial $g$ solution to the four last equations of system (\ref{21nov1}), to (\ref{21nov2}), and to the right systems in  (\ref{21nov5}) and (\ref{21nov3}). Thus $g$ needs to belong to the intersection of $6+5N$ hyperplanes and to satisfies (\ref{21nov2}). Let 
$$
f_l(t) = \sum_{p=0}^P a_p^l t^p \quad \mbox{and} \quad g_l(t) = \sum_{q =0}^P b_q^l t^q,
$$
$$
f_0(t) = \sum_{p=0}^d a_p^0 t^p \quad \mbox{and} \quad g(t) = \sum_{q \in \Z} b_q t^q,
$$
where $P$ is the maximum of the degrees of $f_1, \ldots, f_N, g_1, \ldots, g_N$ and $d$ is the degree of $f_0$. We have 
\begin{eqnarray}\label{21nov9}
 \left\{
\begin{array}{rcl}
\int_0^1 g(s) ds & =  & \sum_{q } \frac{1}{q+1} \, b_q \\
\int_0^1 sg(s) ds & = & \sum_{q} \frac{1}{q+2} \, b_q,
\end{array}
\right.
\end{eqnarray}
\begin{eqnarray}\label{21nov6}
 \left\{
\begin{array}{rcl}
f_l \odot (sg) & = & \sum_{q}  \left( \sum_p  \alpha_{p,q+1} \, a_p^l \right) \, b_q \\
 f_l \odot (s^2g) & = & \sum_{q}  \left( \sum_p  \alpha_{p,q+2} \, a_p^l \right) \, b_q \\
g \odot (sf_l)  & = &  \sum_{q}  \left( \sum_p  \alpha_{q,p+1} \, a_p^l \right) \, b_q \\
g \odot (s^2 f_l)  & = &  \sum_{q}  \left( \sum_p  \alpha_{q,p+2} \, a_p^l \right) \, b_q,
\end{array}
\right.
\end{eqnarray}
and
\begin{equation}\label{21nov8}
(tf_l) \odot (sg)  = \sum_{q}  \left( \sum_p  \alpha_{p+1,q+1} \, a_p^l \right) \, b_q 
\end{equation}
for every $l=1,...,N,$ and moreover 
\begin{eqnarray}\label{21nov7}
 (tf_0) \odot (sg) = \sum_{q}  \left( \sum_p  \alpha_{p+1,q+1} \, a_p^0 \right) \, b_q.
\end{eqnarray}
We need to show that the kernel of the linear form (given by (\ref{21nov7}))
$$
\Phi_{f_0} \, : \, \left( b_q\right) \in \R_d[X] \, \longmapsto \,  \sum_{q}  \left( \sum_p  \alpha_{p+1,q+1} \, a_p^0 \right) \, b_q
$$
does not contain the intersection of the kernels of the $2+4(N+1)+N=5N+6$ linear forms given by (\ref{21nov9})-(\ref{21nov8}). If this is the case, for every integer $d\geq 0$, any choice of $f_0$ in $\R_d[X]$, and any integer $d'\geq 0$, there are  $C=5N+6$ real numbers (not all zero)
$$
\lambda_1^{l,d'}, \ldots, \lambda_{5}^{l,d'}, \lambda_{6}^{0,d'}, \lambda_{7}^{0,d'},  \lambda_{8}^{0,d'},  \lambda_{9}^{0,d'},  \lambda_{10}^{d'},  \lambda_{11}^{d'},  
$$
such that for every integer $q\in \{0, \ldots, d'\}$,
\begin{eqnarray*}
 \sum_{p=0}^d  \alpha_{p+1,q+1} \, a_p^0 & = & \sum_{l=1}^N \lambda_1^{l,d'}  \left( \sum_{p=0}^P  \alpha_{p,q+1} \, a_p^l \right) + \sum_{l=1}^N \lambda_2^{l,d'} \left( \sum_{p=0}^P  \alpha_{p,q+2} \, a_p^l \right) \\
 & \quad & \quad + \sum_{l=1}^N \lambda_3^{l,d'}  \left( \sum_{p=0}^P  \alpha_{q,p+1} \, a_p^l \right) + \sum_{l=1}^N \lambda_4^{l,d'}  \left( \sum_{p=0}^P  \alpha_{q,p+2} \, a_p^l \right) \\
     & \quad & \quad + \sum_{l=1}^N \lambda_5^{l,d'}  \left( \sum_{p=0}^d  \alpha_{p+1,q+1} \, a_p^l \right) +  \lambda_{6}^{0,d'}  \left( \sum_{p=0}^d  \alpha_{p,q+1} \, a_p^0 \right) \\
& \quad & \quad +  \lambda_7^{0,d'} \left( \sum_{p=0}^d  \alpha_{p,q+2} \, a_p^0 \right) +\lambda_8^{0,d'}  \left( \sum_{p=0}^d  \alpha_{q,p+1} \, a_p^0 \right)\\
 & \quad & \quad + \lambda_9^{0,d'}  \left( \sum_{p=0}^d  \alpha_{q,p+2} \, a_p^0 \right) +    \frac{\lambda_{10}^{d'}}{q+1} + \frac{ \lambda_{11}^{d'}}{q+2}.
\end{eqnarray*}
Observe that the above equality can be written as 
\begin{eqnarray*}
0 & = &   \sum_{p=0}^d  \left[  \left( \sum_{l=1}^N \lambda_5^{l,d'} \, a_p^l\right) -  a_p^0  \right] \, \alpha_{p+1,q+1}  \\
 & \quad & \quad +\sum_{p=0}^d    \left[  \left(  \sum_{l=1}^N \lambda_1^{l,d'} \, a_p^l \right)  +   \lambda_{6}^{0,d'}   \, a_p^0     \right] \, \alpha_{p,q+1} \\
  & \quad & \quad +\sum_{p=0}^d    \left[  \left(  \sum_{l=1}^N \lambda_2^{l,d'} \, a_p^l \right)  +   \lambda_{7}^{0,d'}   \, a_p^0     \right] \, \alpha_{p,q+2} \\
     & \quad & \quad +\sum_{p=0}^d    \left[  \left(  \sum_{l=1}^N \lambda_3^{l,d'} \, a_p^l \right)  +   \lambda_{8}^{0,d'}   \, a_p^0     \right] \, \alpha_{q,p+1} \\
      & \quad & \quad +\sum_{p=0}^d    \left[  \left(  \sum_{l=1}^N \lambda_4^{l,d'} \, a_p^l \right)  +   \lambda_{9}^{0,d'}   \, a_p^0     \right] \, \alpha_{q,p+2} \\
       & \quad & \quad + \frac{\lambda_{10}^{d'}}{q+1} + \frac{ \lambda_{11}^{d'}}{q+2}.
         \end{eqnarray*}
For every $q$, let 
$$
V(q) = \bigl(V^1(q), \ldots, V^{7}(q) \bigr) \in \R^{7(d+1)}
$$
with
 $$
 V^i(q) = \bigl( V_0^i(q), \ldots, V_d^i(q) \bigr) \in \R^{d+1} \qquad \forall i=1, \ldots, 7,
 $$
 defined by
$$
\left\{
\begin{array}{rcl}
V_p^1 (q) & = & \alpha_{p+1,q+1} \\
V_p^2 (q) & = & \alpha_{p,q+1} \\
V_p^3 (q) & = & \alpha_{p,q+2} \\
V_p^4 (q) & = & \alpha_{q,p+1} \\
V_p^5 (q) & = & \alpha_{q,p+2} \\
V_p^{6} (q) & = & \frac{1}{(d+1)(q+1)} \\
V_p^{7} (q) & = & \frac{1}{(d+1)(q+2)} 
\end{array}
\right.
$$
for every $p=0, \ldots, d$. The above equality means that for every $d'\geq 0$, there is a linear form $\Psi^{d'}$ on $\R^{7(d+1)}$
of the form 
\begin{eqnarray*}
\Psi^{d'} (V) & = &   \sum_{p=0}^d  \left[  \Gamma_p^{1,d'} -  a_p^0  \right] \, V_p^1  +\sum_{p=0}^d    \left[   \Gamma_p^{2,d'}   +   \lambda_{6}^{0,d'}   \, a_p^0     \right] \, V_p^2 \\
  & \quad & \quad +\sum_{p=0}^d    \left[    \Gamma_p^{3,d'}  +   \lambda_{7}^{0,d'}   \, a_p^0     \right] \, V_p^3 +\sum_{p=0}^d    \left[   \Gamma_p^{4,d'}   +   \lambda_{8}^{0,d'}   \, a_p^0     \right] \, V_p^4 \\
    & \quad & \quad +\sum_{p=0}^d    \left[   \Gamma_p^{5,d'} +   \lambda_{9}^{0,d'}   \, a_p^0     \right] \, V_p^5  +\sum_{p=0}^d   \lambda_{10}^{d'} \, V_p^6 + \sum_{p=0}^d   \lambda_{11}^{d'}  \, V_p^7
         \end{eqnarray*}
for every $V=\bigl( V^1, \ldots, V^{7}\bigr) \in (\R^{(d+1)})^{7}$ such that
$$
\Psi^{d'} \bigl(V(q)\bigr) =0 \qquad \forall q \in \{0,\ldots, d'\}.
$$
For every integer $d'\geq 0$, let $\dim(d')$ be the dimension of the vector space which is generated by $V(0), \ldots, V(d')$. The function $d' \mapsto \dim(d')$ is nondecreasing and  valued in the positive integers. Moreover it is bounded by $7(d+1)$. Thus it is stationnary and in consequence there is $\bar{d}'\geq 0$ such that for every $q >\bar{d}'$, 
$$
V(q) \in \mbox{Span} \Bigl\{ V(0), \ldots, V\bigl( \bar{d}'\bigr) \Bigr\}.
$$ 
Therefore there is a linear form $\Psi : \R^{7(d+1)} \rightarrow \R$
of the form 
\begin{eqnarray*}
\Psi (V) & = &   \sum_{p=0}^d  \left[  \Gamma_p^{1} -  a_p^0  \right] \, V_p^1  +\sum_{p=0}^d    \left[   \Gamma_p^{2}   +   \lambda_{6}^{0}   \, a_p^0     \right] \, V_p^2 \\
  & \quad & \quad +\sum_{p=0}^d    \left[    \Gamma_p^{3}  +   \lambda_{7}^{0}   \, a_p^0     \right] \, V_p^3 +\sum_{p=0}^d    \left[   \Gamma_p^{4}   +   \lambda_{8}^{0}   \, a_p^0     \right] \, V_p^4 \\
    & \quad & \quad +\sum_{p=0}^d    \left[   \Gamma_p^{5} +   \lambda_{9}^{0}   \, a_p^0     \right] \, V_p^5  +\sum_{p=0}^d   \lambda_{10} \, V_p^6 + \sum_{p=0}^d   \lambda_{11}  \, V_p^7,
         \end{eqnarray*}
for every $V=\bigl( V^1, \ldots, V^{7}\bigr) \in (\R^{(d+1)})^{7}$ such that
$$
\Psi \bigl(V(q)\bigr) =0 \qquad \forall q \in \N.
$$
We observe that for any integers $p,q \geq 0$,
$$
\alpha_{p,q}=\frac{1}{(q+1)(p+q+2)} = \frac{1}{p+1} \left[   \frac{1}{q+1} - \frac{1}{q+p+2}   \right],
$$ 
then we have for all $q\in \N$, 
\begin{eqnarray*}
0 & = & \Psi \bigl(V(q)\bigr)   \\
& = &   \sum_{p=0}^d  \left[ \frac{   \Gamma_p^{1} -  a_p^0 }{p+2}\right]  \, \left( \frac{1}{q+2}\right)  -  \sum_{p=0}^d   \left[ \frac{   \Gamma_p^{1} -  a_p^0 }{p+2}\right]  \, \left( \frac{1}{q+p+4}\right)\\
& \quad & \quad +\sum_{p=0}^d   \left[ \frac{   \Gamma_p^{2}   +   \lambda_{6}^{0}   \, a_p^0 }{p+1}\right]  \, \left( \frac{1}{q+2}\right)    -  \sum_{p=0}^d   \left[ \frac{   \Gamma_p^{2}   +   \lambda_{6}^{0}   \, a_p^0   }{p+1}\right]  \, \left( \frac{1}{q+p+3}\right)   \\
& \quad & \quad +\sum_{p=0}^d   \left[ \frac{   \Gamma_p^{3}  +   \lambda_{7}^{0}   \, a_p^0 }{p+1}\right]  \, \left( \frac{1}{q+3}\right)    -  \sum_{p=0}^d   \left[ \frac{   \Gamma_p^{3}  +   \lambda_{7}^{0}   \, a_p^0   }{p+1}\right]  \, \left( \frac{1}{q+p+4}\right)   \\
    & \quad & \quad  +\sum_{p=0}^d    \left[  \frac{ \Gamma_p^{7} +   \lambda_{8}^{0}   \, a_p^0 }{ p+2}    \right] \, \left( \frac{1}{q+p+3}\right)+\sum_{p=0}^d    \left[ \frac{  \Gamma_p^{5} +   \lambda_{9}^{0}   \, a_p^0 }{p+3 }    \right] \,  \left( \frac{1}{q+p+4}\right)\\
 & \quad & \quad    +\sum_{p=0}^d   \frac{\lambda_{10}}{d+1} \, \left( \frac{1}{q+1} \right)  + \sum_{p=0}^d   \frac{\lambda_{11}}{d+1} \,\left( \frac{1}{q+2} \right).
\end{eqnarray*}
This can be written as 
\begin{eqnarray*}
0 & = & \Psi \bigl(V(q)\bigr)   \\
& = &  \sum_{p=0}^d   \frac{\lambda_{10}}{d+1} \, \left( \frac{1}{q+1}\right) + \sum_{p=0}^d  \left[ \frac{   \Gamma_p^{1} -  a_p^0 }{p+2} + \frac{   \Gamma_p^{2}   +   \lambda_{6}^{0}   \, a_p^0 }{p+1} + \frac{\lambda_{11}}{d+1}  \right]  \, \left( \frac{1}{q+2}\right) \\
& \quad & \quad +  \sum_{p=0}^d  \left( \left[ \frac{   \Gamma_p^{3}  +   \lambda_{7}^{0}   \, a_p^0 }{p+1}\right]  +  \left[ \frac{  \Gamma_0^{7}  +   \lambda_{8}^{0}   \, a_0^0 }{2}\right]-\left[ \Gamma_0^{2}  +   \lambda_{6}^{0}   \, a_0^0 \right] \right)\, \left( \frac{1}{q+3}\right) \\
& \quad & -  \sum_{r=4}^{d+3}  \Delta_r \cdot \left( \frac{1}{q+r}\right)\\
    & \quad & \quad  -  \left(   \left[ \frac{   \Gamma_d^{3} + \lambda_{7}^{0} \, a_d^0  }{d+1}\right]   -  \left[ \frac{   \Gamma_d^{5}  +   \lambda_{9}^{0}   \, a_d^0   }{d+3}\right] + \left[ \frac{   \Gamma_d^{1} -   \, a_d^0   }{d+2}\right] \right) \left( \frac{1}{q+d+4}\right),
\end{eqnarray*}
where for any $r\in \{4, \ldots, d+3\}$,
\begin{multline*}
\Delta_r :=  \frac{\Gamma_{r-3}^{7} + \lambda_{8}^{0} \, a_{r-3}^0 + \Gamma_{r-4}^{5} + \lambda_{9}^{0} a_{r-4}^0}{r-1}\\
 -\frac{\Gamma_{r-3}^{2} + \Gamma_{r-4}^{1}+ \lambda_{6}^{0} \, a_{r-3}^0 - a_{r-4}^0}{r-2} -\frac{\Gamma_{r-4}^{3} + \lambda_{7}^{0} \, a_{r-4}^0}{r-3} \\
=  \frac{\Gamma_{r-3}^{7} + \Gamma_{r-4}^{5}}{r-1}  - \frac{ \Gamma_{r-4}^{1}+ \Gamma_{r-3}^{2} }{r-2}  - \frac{   \Gamma_{r-4}^{3} }{r-3} + \left( \frac{\lambda_{8}^{0}}{r-1}-\frac{\lambda_{6}^{0}}{r-2} \right) \,  a_{r-3}^0 \\
+ \left( \frac{\lambda_{9}^{0}}{r-1}-\frac{\lambda_{7}^{0}}{r-3} - \frac{1}{r-2} \right) \,  a_{r-4}^0.
\end{multline*}
The function $\Psi$ is a rational function with infinitely many zeros, so it vanishes everywhere and in consequence all its coefficients vanish. Remember in addition that by construction,
$$
\Gamma_p^l = 0 \qquad \forall p\in \{P+1,\ldots, d\}, \, \forall l \in \{1, \ldots, N\}.
$$
Then we have $\Delta_r =0$ for any $r\in \{P+5, \ldots, d+3\}$,
 that is

$$
 \left( \frac{\lambda_{8}^{0}}{r-1}-\frac{\lambda_{6}^{0}}{r-2} \right) \,  a_{r-3}^0 + \left( \frac{\lambda_{9}^{0}}{r-1}-\frac{\lambda_{7}^{0}}{r-3}- \frac{1}{r-2} \right) \,  a_{r-4}^0 =0,
$$
and in addition the coefficient in front of $ \frac{1}{q+d+4}$ vanishes, that is
$$
    \left( \frac{    \lambda_{7}^{0}   }{d+1} - \frac{ \lambda_{9}^{0} }{d+3}\right)      \, a_d^0       =  \frac{a_d^0}{d+2}.
$$
In conclusion, if there is no vector space of dimension $N+1$ in $\mathcal{L}\cup \{0\}$, then for every polynomial $f_0\in \R_d[X]$ of degree $d$ (that is $a_d^0\neq 0$) the linear form $\Phi_{f_0}$ contains the intersection of the kernels of the $5N+6$ linear forms given by (\ref{21nov9})-(\ref{21nov8}). By the above discussion, this implies that there are four reals numbers $A, B, C, D$ not all zero (because $a_d^0\neq 0$) such that 
$$
 \left(     \frac{   A   }{r-1 } + \frac{   B }{ r-2}  \right) a_{r-3}^0 + \left(   \frac{C }{ r-1} -  \frac{ 1  }{r-2} +    \frac{  D }{ r-3}    \right) a_{r-4}^0 =0,
 $$
 for any  $r\in \{P+5, \ldots, d+3\}$ and in addition
$$
    \left( \frac{   D }{d+1} + \frac{    C     }{d+3}\right)      \, a_d^0       = - \frac{  a_d^0 }{d+2} \, \Longrightarrow \, D= - (d+1) \left( \frac{1}{d+2} + \frac{C}{d+3}  \right).
$$
Note that for every $p \in \{P+2, \ldots, d\}$,
\begin{eqnarray*}
& \quad &   \frac{C }{ p+2} -  \frac{ 1  }{p+1} +    \frac{  D }{ p}  \\
 & = &  \frac{C }{ p+2} -  \frac{ 1  }{p+1} - \frac{d+1}{p} \left( \frac{1}{d+2} + \frac{C}{d+3}  \right) \\
  & = & \frac{2C(d+2)(p-d-1)(p+1)-(d+3)((2d+3)p+d+1)(p+2)}{p(p+1)(p+2)(d+2)(d+3)} 
\end{eqnarray*}
This means that the set of coefficients $(a_p^0)_{p\in \{P+1,d\}}$ belongs to the algebraic set $\mathcal{S}$ of $(d-P)$-tuples $(a_p)_{p\in \{P+1,d\}} \in  \R^{d-N} $ for which there is $(A,B,C)\in \R^3$ such that
\begin{multline}\label{constraint}
\left( \frac{2C(d+2)(p-d-1)(p+1)-(d+3)((2d+3)p+d+1)(p+2)}{p(p+1)(p+2)(d+2)(d+3)}\right) \, a_{p-1} \\
 +  \left(     \frac{   A   }{p+2 } + \frac{   B }{ p+1}  \right) \, a_p =0 \qquad \forall p \in \{P+2, \ldots, d\}.
\end{multline}
For every triple $(A,B,C) \in \R^3$, denote by $\mathcal{S}(A,B,C)$ the algebraic set of  $(d-P)$-tuples $(a_p)_{p\in \{P+1,d\}} \in  \R^{d-N} $ satisfying (\ref{constraint}). Notice that for every $(A,B,C) \in \R^3$, the function
$$
p \in  \{P+2, \ldots, d\} \, \longmapsto \,  \frac{   A   }{p+2 } + \frac{   B }{ p+1} = \frac{ (A+B)p + (A+2B)}{(p+1)(p+2)}
$$
vanishes for at most one $p$ in  $\{P+2, \ldots, d\}$. This means that given $(A,B,C) \in \R^3$ either we have
$$
a_p = C_{p}^{d} \, a_{p-1} \qquad \forall p \in \{P+2, \ldots, d\},
$$
with
\begin{multline*}
 C_{p}^{d} :=    \left( \frac{2C(d+2)(p-d-1)(p+1)-(d+3)((2d+3)p+d+1)(p+2)}{p(p+1)(p+2)(d+2)(d+3)}\right) \\
 \Big/  \left(\frac{ (A+B)p + (A+2B)}{(p+1)(p+2)}  \right)       \qquad \forall p \in \{P+2, \ldots, d\},
\end{multline*}
 or there is $\bar{p}=\bar{p}(A,B,C) \in  \{P+2, \ldots, d\}$ such that 
$$
a_p = C_{p}^{d} \, a_{p-1} \qquad \forall p \in \{P+2, \ldots, d\} \setminus \bigl\{\bar{p}\bigr\}
$$
and
$$
\left( \frac{2C(d+2)(\bar{p}-d-1)(\bar{p}+1)-(d+3)((2d+3)\bar{p}+d+1)(\bar{p}+2)}{\bar{p}(\bar{p}+1)(\bar{p}+2)(d+2)(d+3)}\right) \, a_{\bar{p}-1} =0.
$$
Since the sets we are dealing with are algebraic (see \cite{bcr98,coste82}), we infer that given $(A,B,C) \in \R^3$, the algebraic set   $\mathcal{S}(A,B,C)\subset  \R^{d-N} $ has at most dimension three, which means that  $\mathcal{S}\subset  \R^{d-N} $ has at most dimension six.

In conclusion, the coefficients  $(a_p^0)_{p\in \{0,d\}}$ of $f_0$ have to belong to the intersection of $2+5N$ hyperplanes in $\R_d[X]$, and if in addition if there is no vector space of dimension $N+1$ in $\mathcal{L}\cup \{0\}$, then the 
 $(d-P)$-tuples $(a_p^0)_{p\in \{P+1,d\}}$ must belong to $\mathcal{S}\subset  \R^{d-N} $ of dimension $\leq 6$. But, for $d$ large enough,  the intersection of $2+5N$ hyperplanes in $\R_d[X]$ with the complement of an algebraic set of dimension at most $6+P+1$ is non empty. This concludes the proof of Lemma \ref{LEMtechnical1nov}.

\signal
\signlr
\signrr

\end{document}